\theoremstyle{plain}
\newtheorem{theorem}{Theorem}[section]
\newtheorem{lemma}[theorem]{Lemma}
\newtheorem{proposition}[theorem]{Proposition}
\newtheorem{corollary}[theorem]{Corollary}
\theoremstyle{definition}
\newtheorem{remark}[theorem]{Remark}
\newtheorem{definition}[theorem]{Definition}
\newtheorem{example}[theorem]{Example}
\newcommand{\calM}{\mathcal M}
\newcommand{\BM}{\overline{\mathcal M}}
\newcommand{\calC}{\mathcal C}
\newcommand{\calO}{\mathcal O}
\newcommand{\calH}{\mathcal H}
\newcommand{\bbA}{\mathbb A}
\newcommand{\calL}{\mathcal L}
\newcommand{\calF}{\mathcal F}
\newcommand{\hyp}{\operatorname{hyp}}
\newcommand{\even}{\operatorname{even}}
\newcommand{\odd}{\operatorname{odd}}
\newcommand{\nonhyp}{\operatorname{nonhyp}}
\newcommand{\ord}{\operatorname{ord}}
\newcommand{\area}{\operatorname{area}}
\newcommand{\bbC}{\mathbb C}
\newcommand{\bbN}{\mathbb N}
\newcommand{\bbP}{\mathbb P}
\newcommand{\bbQ}{\mathbb Q}
\newcommand{\bbZ}{\mathbb Z}
\newcommand{\bbG}{\mathbb G}
\newcommand{\lcm}{\operatorname{lcm}}
\newcommand{\Proj}{\operatorname{Proj}}
\newcommand{\Spec}{\operatorname{Spec}}
\newcommand{\fkm}{\mathfrak{m}}
\newcommand{\fkc}{\mathfrak{c}}
\newcommand{\wfkm}{\widetilde{\mathfrak{m}}}
\newcommand{\wR}{\widetilde{R}}
\begin{document}

\title[]{Gorenstein singularities with $\bbG_m$-action and moduli spaces of holomorphic differentials}

\date{\today}

\author{Dawei Chen}
\address{Department of Mathematics, Boston College, Chestnut Hill, MA 02467, USA}
\email{dawei.chen@bc.edu}

\author{Fei Yu}
\address{School of Mathematical Sciences, Zhejiang University, Hangzhou, China}
\email{yufei@zju.edu.cn}

\subjclass[2020]{Primary: 14H10; Secondary: 13H10, 14B07, 14H20, 14H45, 14H55, 32G15}

\thanks{Research of D.C. was supported by the National Science Foundation under Grant DMS-2301030, Simons Travel Support for Mathematicians, and a Simons Fellowship under Record ID SFI-MPS-SFM-00005694.}
\thanks{Research of F.Y. was supported by Fundamental Research Funds for the Central Universities 2024FZZX02-01-01.} 

\begin{abstract}
Given a holomorphic differential on a smooth complex algebraic curve, we associate to it a Gorenstein curve singularity with $\bbG_m$-action via a test configuration. This construction decomposes the strata of holomorphic differentials with prescribed orders of zeros into negatively graded miniversal deformation spaces of such singularities. Additionally, it provides a natural description for the singular curves that appear in the boundary of the miniversal deformation spaces. 

Our approach leads to a number of applications. We classify the unique Gorenstein singularity with $\bbG_m$-action for each nonvarying stratum of holomorphic differentials and study when these nonvarying strata can be compactified by weighted projective spaces. Moreover, extending the classical results about $ADE$ singularities, we establish the $K(\pi,1)$-property for non-hypersurface complete intersection 
singularities of type $U_7$, $U_8$, $U_9$, and $S_{k}$. We also study  singularities with bounded $\alpha$-invariants in the log minimal model program for $\overline{\mathcal M}_g$ and utilize them to bound the slopes of effective divisors in $\overline{\mathcal M}_g$. Finally, we show that the loci of subcanonical points with fixed semigroups have trivial tautological rings and provide a criterion to determine whether they are affine varieties. 
\end{abstract}

\maketitle
     
\setcounter{tocdepth}{1}
\tableofcontents

\section{Introduction}
\label{sec:intro}

For $g, n\geq 1$, let $\mu = (m_1, \ldots, m_n)$ be a partition of $2g-2$, where $m_i \in \bbN$ and $\sum_{i=1}^n m_i = 2g-2$. Denote by $\bbP \calH (\mu)$ the moduli space parameterizing smooth and connected complex algebraic curves $C$ with distinct marked points $p_1, \ldots, p_n$ such that $\sum_{i=1}^n m_i p_i$ is a canonical divisor. Equivalently, $\bbP \calH (\mu)$ parameterizes holomorphic differentials $\omega \in H^0(C, K)$ (up to scaling) of zero type specified by $\mu$, where $K$ is the canonical bundle. 

Since the union of $\bbP \calH (\mu)$ for all partitions $\mu$ of $2g-2$ stratifies the (projectivized) Hodge bundle of holomorphic differentials, $\bbP \calH (\mu)$ is called the (projectivized) {\em stratum of holomorphic differentials of type $\mu$}. For special $\mu$, the stratum $\bbP \calH (\mu)$ can have up to three connected components, due to the {\em spin parity} and {\em hyperelliptic structure}; see \cite{KZ03}. We use $\bbP \calH (\mu)^{\ast}$ for $\ast\in \{\odd, \even, \hyp, \nonhyp\}$ to denote these connected components of the strata. As a convention, a ``stratum'' in this paper will refer to a {\em connected component} if the corresponding $\bbP \calH (\mu)$ is disconnected. When $\mu$ contains many entries of the same value, we also use the exponential notation; e.g., $\mu = (1^{2g-2})$ is the signature of differentials with $2g-2$ simple zeros. 

The study of differentials with prescribed orders of zeros has played an important role in understanding the geometry of moduli spaces and dynamics on flat surfaces. The various aspects exhibited by differentials have inspired fruitful results and revealed profound connections in the fields of billiards in polygons, cycle class computations, enumerative geometry, modular compactifications, and Teichm\"uller dynamics. We refer to \cite{Z06, W15, C17, F24} for some introductions to this fascinating subject. 

In this paper, we focus on an interesting new interplay between the strata of differentials and deformations of singularities. We summarize our construction and main results as follows. 

\subsection{Gorenstein singularities with $\bbG_m$-action and test configurations}

Our setup goes as follows. Given a canonical divisor of type $\mu$ supported on the smooth pointed curve $(C, p_1, \ldots, p_n)$, we take a trivial family $C\times \bbA^1$, perform a weighted blowup at the zeros $p_i$ of the central fiber, where the weights of the blowup depend on $\mu$, and finally contract the proper transform of the central fiber to form a singular curve $Y$ with an isolated singularity $q$. The resulting family provides a {\em test configuration} for $(Y, q)$, whose isomorphism class is conversely determined by the associated Rees filtration and graded algebra. The details of the construction are provided in Section~\ref{sec:test}. 

This simple construction possesses a number of remarkable properties, which we summarize as follows; see Theorem~\ref{thm:filtration} and Theorem~\ref{thm:boundary} for detailed elaborations.  

\begin{theorem}
\label{thm:main} 
In the above construction, $Y$ admits an isolated Gorenstein singularity at $q$ with $\bbG_m$-action. Canonical divisors in $\bbP\calH (\mu)$ that produce the same isomorphism class of $(Y, q)$ form an open subset $\bbP{\rm Def}^{-}_s(Y, q)$ of smooth deformations in the negatively graded versal deformation space $\bbP{\rm Def}^{-}(Y, q)$, where the union of these $\bbP{\rm Def}^{-}_s(Y, q)$ provides a decomposition for $\bbP\calH(\mu)$. 

Moreover, the boundaries of these $\bbP{\rm Def}^{-}(Y, q)\setminus \bbP{\rm Def}^{-}_s(Y, q)$ parameterize stable Gorenstein singular curves that admit a canonical divisor of type $\mu$. 

Finally, the weights and characters of the $\bbG_m$-action on the space of pluricanonical differential forms of $Y$ can be determined explicitly from the associated filtration of the test configuration. 
\end{theorem}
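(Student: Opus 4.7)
The plan is to establish the three assertions of Theorem~\ref{thm:main} in turn, by tracking the test configuration $\calX\to\bbA^1$ of Section~\ref{sec:test} through its associated Rees filtration.

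First, for the Gorenstein property and the $\bbG_m$-action on $(Y,q)$: the weighted blowup of $C\times\bbA^1$ along $\{p_i\}\times\bbA^1$ is $\bbG_m$-equivariant for the scaling action on $\bbA^1$, and this equivariance descends to the contraction that produces $\calX$, so the central fiber $Y$ carries a $\bbG_m$-action with $q$ as an isolated fixed point. For the Gorenstein property, I would present the local ring $\calO_{Y,q}$ as a graded Rees-type algebra built from the filtration of the sections of $\calO_C(-kD)$ ($k\geq 0$) by the weights $m_i+1$, where $D=\sum m_ip_i$. The fact that $D$ is canonical translates into the existence of a local generator of $\omega_{Y,q}$, constructed directly from the chosen differential $\omega$; this forces $\omega_Y$ to be invertible near $q$.

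Next, to decompose $\bbP\calH(\mu)$ into the loci $\bbP{\rm Def}^-_s(Y,q)$ of smooth deformations in Looijenga's negatively graded versal spaces, I would declare two differentials equivalent if they produce isomorphic $(Y,q)$, and then identify equivalence classes with smoothing families. The test configuration itself is a $\bbG_m$-equivariant one-parameter smoothing of $(Y,q)$; by the universal property of the versal deformation with good $\bbG_m$-action, it is classified by a morphism to ${\rm Def}^-(Y,q)$. That the union of images covers $\bbP\calH(\mu)$ is immediate, since every differential produces some test configuration. The harder point is to show that each $\bbP{\rm Def}^-_s(Y,q)$ is open, which amounts to checking that variations of $(C,\omega)$ span the negatively graded part of $T^1_{(Y,q)}$. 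I would combine a dimension count (comparing $\dim \bbP\calH(\mu)$ with the dimension of the negatively graded smoothing component) with an injectivity argument coming from the Rees filtration.

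For the boundary assertion, a point in $\bbP{\rm Def}^-(Y,q)\setminus \bbP{\rm Def}^-_s(Y,q)$ is a non-smooth fiber of the versal family. Because the versal family is $\bbG_m$-equivariant and flat, such a fiber is a specialization of smooth ones, hence carries a twisted canonical differential of type $\mu$ in the sense of the incidence variety compactification; the isolated singularity hypothesis forces finite automorphisms and hence stability. For the weights and characters of the $\bbG_m$-action on $H^0(Y,\omega_Y^{\otimes k})$, the $\bbZ$-grading on $\calO_{Y,q}$ induces a grading on each pluricanonical module, and the weights can be read off as explicit integer combinations of the $m_i+1$ directly from the Rees filtration; the characters then follow by a graded Riemann--Roch computation on $Y$, with the shift supplied by the explicit local generator of $\omega_Y$ constructed above.

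The main obstacle is the surjectivity step underlying the decomposition, namely that variations of $(C,\omega)$ exhaust the negatively graded infinitesimal deformations of $(Y,q)$. Equivalently, every negatively graded class in $T^1_{(Y,q)}$ must be realized by moving the differential within $\bbP\calH(\mu)$. This is a comparison of two tangent spaces that will require a careful cohomological analysis of the cotangent complex of the non-smooth affine singularity $Y$, made tractable by the $\bbG_m$-equivariance of the Rees filtration and by the fact that the negative weights are controlled purely by the combinatorics of $\mu$.
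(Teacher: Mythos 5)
Your treatment of part (i) and of the weights/characters matches the paper's: the $\bbG_m$-equivariance of the weighted blowup descends through the contraction, the descended one-form locally generates $\omega_Y$ at $q$ (hence $Y$ is Gorenstein), and the weights are read off from the Rees filtration $\calF^{\lambda}H^0(C,m\omega_C^{\log})$ essentially as you describe. The genuine gap is in the central decomposition step. You identify as ``the main obstacle'' the claim that variations of $(C,\omega)$ span the negatively graded part of $T^1$ of $(Y,q)$, to be attacked by a dimension count plus a cotangent-complex analysis --- and you leave that entirely open. The paper neither proves nor needs this tangent-space surjectivity. It establishes the identification of $\bbP\calH(\mu)_{(Y,q)}$ with $\bbP{\rm Def}^{-}_s(Y,q)$ by a two-way correspondence: (a) the test configuration is an isotrivial degeneration with good (attractive) $\bbG_m$-action, so by Pinkham's and Looijenga's finality of ${\rm Def}^{-}(Y,q)$ in the category of deformations with good $\bbG_m$-action, each $(C,p_1,\ldots,p_n)$ with $s(C,p_1,\ldots,p_n)\cong(Y,q)$ lands in $\bbP{\rm Def}^{-}(Y,q)$; and (b) conversely, for any smooth fiber of the negatively graded versal family, the local generator of $\omega_Y$ at $q$ restricts to $dt_i/t_i^{m_i+2}$ on each branch, and stable reduction of the equivariant smoothing produces a smooth tail $(C,p_1,\ldots,p_n)$ with canonical divisor $\sum_{i=1}^n m_ip_i$, i.e., a point of $\bbP\calH(\mu)$ mapping back to $(Y,q)$. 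Without step (b) your argument cannot close: even a completed $T^{1,-}$-surjectivity argument would only show that the image of $\bbP\calH(\mu)_{(Y,q)}$ is \emph{open} in $\bbP{\rm Def}^{-}_s(Y,q)$, not that it exhausts it.

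Two smaller discrepancies in the boundary assertion. You infer stability from ``finite automorphisms,'' but the relevant notion here is ampleness of $\omega_C^{\log}$ for a stable pointed Gorenstein curve; the paper obtains this because $\omega_Y^{\log}$ is the restriction of the relatively ample polarization $\calL$ of the test configuration and equals $\ell$ times Looijenga's divisor at infinity. And the existence of a type-$\mu$ canonical divisor on a singular fiber is derived not from the incidence variety compactification but from the constancy of the rank of the Hodge bundle over the (open) Gorenstein locus, lifting $\sum_{i=1}^n m_iy_i$ from the central fiber to nearby fibers and taking limits; the paper also proves the converse, that every stable pointed Gorenstein curve of type $\mu$ occurs, by running the weighted-blowup-and-contraction construction on such a curve and using the residue theorem for reduced singular curves to see that the contracted point is again Gorenstein. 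These last points are repairable, but the missing converse in the decomposition step is a real hole in the proposal.
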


We remark that the special case of $n = 1$ was studied in detail by Pinkham in \cite{P74} (for unibranched singularities with $\bbG_m$-action which are not necessarily Gorenstein). In this case, $(Y, q)$ is a monomial singularity and $\bbP{\rm Def}^{-}_s(Y, q)$ can be identified with the locus  $\calM_{g,1}^H\subset  \calM_{g,1}$ parameterizing smooth pointed curves $(C, p)$, where the Weierstrass semigroup $H$ of $p$ is generated by the exponents of the monomials defining $(Y, q)$. Therefore, our construction can be regarded as a refined generalization of Pinkham's result from the case of unibranched singularities to Gorenstein singularities with multiple branches; see also~\cite[Appendix]{L84} for a formal description of such constructions with modular interpretations. 

We also remark that the relation between holomorphic differentials and Gorenstein singularities (not necessarily with $\bbG_m$-action) was speculated by Battistella; see \cite[Section 1.4]{B24}. In particular, \cite[Section 2]{B24} classified the Gorenstein curve singularities in genus three; see also \cite[Appendix A]{S11} and \cite[Section 2]{B22} for the classifications in genus one and two, respectively. Therefore, our work provides a uniform approach towards further classifications of Gorenstein singularities in higher genera with the help of $\bbG_m$-action; see below.  

\subsection{Uniqueness of singularities for the nonvarying strata}

A natural question regarding the classifications of singularities is when a stratum of differentials corresponds to a {\em unique} Gorenstein singularity $(Y,q)$ with $\bbG_m$-action. This turns out to be closely related to the so-called {\em nonvarying strata}, whose original definition is given by all Teichm\"uller curves within the stratum possessing the same numerical behavior (in terms of their slopes, sums of nonnegative Lyapunov exponents, and area Siegel--Veech constants). On the one hand, the hyperelliptic strata and a number of low genus strata are known to be nonvarying; see \cite{CM12, YZ13}. On the other hand, all other strata are expected to be varying based on numerical evidences of computer check. The nonvarying property is also related to strata being affine varieties with trivial tautological rings; see \cite[Theorem 1.1]{C24}. 

Our next result reveals another hidden aspect of this nonvarying phenomenon.  

\begin{theorem}
\label{thm:nonvarying}
There is a unique isomorphism class of Gorenstein singularities with $\bbG_m$-action for every nonvarying stratum $\bbP\calH(\mu)$ of holomorphic differentials of type $\mu$ in the following list: $\mu = (4)^{\odd}$, $(3,1)$, $(2,2)^{\odd}$, $(2,1,1)$, $(6)^{\odd}$, $(6)^{\even}$, $(5,1)$, $(4,2)^{\even}$, $(4,2)^{\odd}$, $(3,3)^{\nonhyp}$, $(3,2,1)$, 
$(2,2,2)^{\odd}$, $(6,2)^{\odd}$, $(5,3)$, as well as the hyperelliptic strata $(2g-2)^{\hyp}$ and $(g-1,g-1)^{\hyp}$, where each of these nonvarying strata can be identified with the locus of smooth deformations in the miniversal deformation space of the corresponding singularity. 

Moreover, except possibly $(2,2,2)^{\odd}$ and $(6,2)^{\odd}$, each of the remaining nonvarying strata is a hypersurface complement in a weighted projective space that can be described explicitly, and hence it is a rational variety with trivial rational Chow group in any positive degree.

Finally, there is a unique isomorphism class of Gorenstein singularities with $\bbG_m$-action associated to every locus of hyperelliptic differentials with signature 
$$\mu = (2a_1, \ldots, 2a_m, b_1, b_1, \ldots, b_n, b_n),$$ where each zero of order $2a_i$ is a Weierstrass point and each pair of zeros of order $b_j$ is hyperelliptic conjugate. 
\end{theorem}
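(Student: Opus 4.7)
The plan is to exploit the decomposition from Theorem~\ref{thm:main}, $\bbP\calH(\mu) = \bigcup_{(Y,q)} \bbP\Def^-_s(Y,q)$, indexed by isomorphism classes of Gorenstein singularities with $\bbG_m$-action. Uniqueness of $(Y,q)$ for a given stratum is equivalent to the union reducing to a single piece. The central tool is a dimension count: since each $\bbP\Def^-_s(Y,q)$ is open in $\bbP\Def^-(Y,q)$, if I can exhibit a candidate $(Y,q)$ for which $\dim \bbP\Def^-(Y,q) = \dim \bbP\calH(\mu) = 2g+n-2$, then $\bbP\Def^-_s(Y,q)$ is top-dimensional in the stratum, and, combined with irreducibility of the relevant connected component (pinned down by the spin/hyperelliptic label), this forces the single piece to fill the whole stratum, simultaneously yielding uniqueness and the identification with the smooth deformation locus.

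For each stratum on the list I would identify the candidate singularity and carry out this dimension calculation. The hyperelliptic strata $(2g-2)^{\hyp}$ and $(g-1,g-1)^{\hyp}$ are the cleanest: Pinkham's monomial construction applied to the Weierstrass semigroup of a hyperelliptic Weierstrass point, respectively its two-branch variant for a conjugate pair of Weierstrass points, produces the candidate, and the dimension match is classical. For the finitely many remaining low-genus entries, one reads off $g$, $n$ and the spin label, writes down the candidate singularity using the existing classifications of Gorenstein curve singularities in low genus (\cite{S11, B22, B24}) together with the explicit filtration produced by the test configuration in Section~\ref{sec:test}, and computes $\dim \bbP\Def^-(Y,q)$ from the negatively graded part of $T^1(Y,q)$. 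Each case reduces to a small graded $\Ext$ computation whose input is prescribed by $\mu$.

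For the weighted projective space structure, I would invoke the general fact that when $\Def^-(Y,q)$ is smooth it is equivariantly an affine space on which $\bbG_m$ acts linearly with strictly negative weights, so $\bbP\Def^-(Y,q)$ is tautologically a weighted projective space and $\bbP\Def^-_s(Y,q)$ is the complement of the discriminant hypersurface parameterizing nonsmoothable deformations. Smoothness of $\Def^-(Y,q)$ is verified case by case by checking vanishing of $T^2(Y,q)$ in negative weight, which is presumably what fails for the two excluded strata $(2,2,2)^{\odd}$ and $(6,2)^{\odd}$. Rationality of a hypersurface complement in a weighted projective space is classical, and its rational Chow group vanishes in positive degree by the localization sequence: from $A^*_{\bbQ}(\bbP(w_0,\ldots,w_N)) = \bbQ[H]/(H^{N+1})$, removing a positive-degree hypersurface imposes the relation $dH = 0$ in the quotient, which kills $H$ and hence all of its positive powers over $\bbQ$.

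The principal obstacle is the case-by-case work in the second paragraph: each nonvarying stratum requires producing the correct singularity with its $\bbG_m$-grading and performing two small but distinct graded $\Ext$ computations (one for $\dim \Def^-$, one for vanishing of $T^2$ in negative weight). The saving grace is that the list is explicit and finite, most singularities are already recorded in the literature, and the $\bbG_m$-weights are directly prescribed by $\mu$ via the test configuration, so the workload is substantial but mechanical.
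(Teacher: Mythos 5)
Your uniqueness argument has a genuine gap. You propose to exhibit one candidate $(Y,q)$ with $\dim \bbP\Def^{-}(Y,q) = \dim \bbP\calH(\mu)$ and conclude from irreducibility of the stratum that $\bbP\Def^{-}_s(Y,q)$ "fills the whole stratum." But the decomposition $\bbP\calH(\mu) = \bigsqcup_{(Y,q)} \bbP\calH(\mu)_{(Y,q)}$ is into locally closed pieces, and a dense piece need not exhaust an irreducible stratum: the complement is a proper closed subset that may carry further singularity classes. The paper's own varying examples show your argument would prove too much. For $\bbP\calH(2,2,2)^{\even}$ (Section~\ref{subsec:(2,2,2)-even}) the stratum is irreducible and the generic (nonhyperelliptic) singularity class accounts for a dense open subset, yet the hyperelliptic sublocus produces a genuinely different singularity, so uniqueness fails; similarly $S(1^4)$ contains a one-parameter family of classes indexed by a cross-ratio (Remark~\ref{rem:cross-ratio}). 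Uniqueness is a pointwise statement — every $(C,p_1,\ldots,p_n)$ in the stratum must yield the same $(Y,q)$ — and no dimension count can deliver it. The paper instead classifies directly: for each $\mu$ it shows that the numerical constraints (the $\delta$-invariant, the number of branches, the Gorenstein length condition, the proportionality of exponents forced by the $\bbG_m$-weights, and the spin/hyperelliptic label) pin down the gap sequence $[\alpha_1\,\alpha_2\,\cdots]$ and then the generators of $\fkm/\fkm^2$ up to rescaling coordinates, so that only one isomorphism class can arise from any point of the stratum. This case-by-case forcing argument is the actual content of the theorem and is not replaceable by your density argument.

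Two smaller points on the second part. First, after verifying $T^2=0$ (which the paper does via \textsf{Singular}, or via the embedding-codimension-$\leq 3$ criterion for Gorenstein singularities), you still need to know that the discriminant in $\bbP(T^{1,-})$ is a hypersurface of \emph{pure codimension one}; a priori the locus of singular deformations could have higher-codimension components, in which case the stratum would not be a hypersurface complement. The paper obtains purity from the known affineness of the nonvarying strata (\cite[Theorem 1.1]{C24}) combined with Goodman's criterion, not for free. Second, your localization-sequence computation of the rational Chow group of the hypersurface complement is fine and agrees with the paper's citation of \cite[Lemma 4.7]{I22}.
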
 

Indeed, for the above nonvarying strata in low genus, 
we can describe the defining equations for the corresponding  singularities as well as their weighted projective embeddings; see Section~\ref{sec:nonvarying}. The two possible exceptional cases in the second part of Theorem~\ref{thm:nonvarying} are due to the fact that the corresponding singularities have nonzero infinitesimal obstructions; see Remark~\ref{rem:T2}. For the remaining cases, with the defining equations of the singularities at hand, we can also describe explicitly the weighted projective spaces that compactify the corresponding strata; see Example~\ref{ex:wps} for a sample description. 

\subsection{Adding ordinary marked points and the $K(\pi,1)$-property} 
\label{subsec:Kpi1}

Recall that a space is $K(\pi, 1)$ if its (orbifold) universal cover is contractible;  equivalently, if its homotopy groups of degree higher than one all vanish. The geometry of $\bbP \calH (\mu)$ and its decomposition into deformation spaces $\bbP{\rm Def}^{-}_s(Y, q)$ can motivate a number of intriguing $K(\pi, 1)$ and affine patching related questions; see \cite{K97, LM14, M17, G24, G25, Q25} for some related conjectures and results. 

Note that our result has identified each of the nonvarying strata with the locus of smooth deformations of the corresponding singularity, which can be regarded as the discriminant hypersurface complement in the miniversal deformation space. A conjecture (sometimes attributed to Saito; see \cite[p. 26]{Ar93} and \cite[p. 185]{L84Book}) says that the discriminant complement in the miniversal deformation space of a positive-dimensional isolated complete intersection singularity is $K(\pi, 1)$. The cases for $A$ and $DE$ singularities were established by Brieskorn and Deligne, respectively; see~\cite{B73, D72} as well as \cite[Chapter 9]{L84Book} for a more detailed discussion. 

Although the original definition of nonvarying strata was stated for $\mu$ with only positive entries, we can also add entries of $0$ into $\mu$ which amounts to adding ordinary marked points away from the original zeros. From the perspective of associated singularities, the former and latter determine each other; see Section~\ref{subsec:ordinary} for a detailed description. This observation has a striking application for the $K(\pi,1)$-property of certain {\em non-hypersurface} complete intersection singularities as follows.

\begin{theorem}
\label{thm:US}
Let $(Y', q')$ be a singularity of type $(\mu, 0)$ obtained by adding an ordinary marked point away from the zeros in the construction of a singularity $(Y,q)$ of type $\mu$. If $\bbP{\rm Def}^{-}_s(Y,q)$ is $K(\pi,1)$, then $\bbP{\rm Def}^{-}_s(Y',q')$ is also $K(\pi,1)$. 

In particular, the $K(\pi,1)$-property holds for the discriminant complement in the miniversal deformation space of the singularities 
$U_7$, $U_8$, $U_9$, $S_{2g+2}$, and $S_{2g+3}$, which arise from the nonvarying strata for $\mu' = 
(4,0)^{\odd}$, $(3,1,0)$, $(6,0)^{\even}$, $(2g-2,0,0)^{\hyp}$, and $(g-1,g-1,0,0)^{\hyp}$, respectively. 
\end{theorem}

 We refer to Theorem~\ref{thm:ordinary} for a detailed description of the above singularities. See also Section~\ref{subsec:hyp-nonvarying} for a related discussion about the $K(\pi,1)$-property of singularities associated to hyperelliptic differentials. Additionally, some of the nonvarying strata in our classification correspond to simple space curve singularities of type $T_7$ (for $\mu = (2,2)^{\odd}$), 
 $T_8$ (for $\mu = (2,1,1)$), 
 $T_9$ (for $\mu = (4,2)^{\even}$), 
 $W_8$ (for $\mu = (6)^{\odd}$), 
 $W_9$ (for $\mu = (5,1)$), 
 $Z_9$ (for $\mu = (3,3)^{\nonhyp}$), 
 and $Z_{10}$ (for $\mu = (8)^{\even}$); 
 see also \cite[(7.22)]{L84Book} and \cite[Table 2]{S15}. Therefore, the $K(\pi,1)$-conjecture for them is thus equivalent to that of the corresponding strata of differentials. 

\subsection{Weights and characters of singularities}

As mentioned in Theorem~\ref{thm:main}, the {\em weights} and {\em characters} of the $\bbG_m$-action on the space of differentials of $Y$ can be determined explicitly through our construction. These important invariants can help us better understand the Hassett--Keel log minimal model program for $\BM_{g}$, as studied extensively by Alper--Fedorchuk--Smyth in \cite{AFS16}.  By utilizing our construction, we can systematically compute these invariants, paving an avenue towards a further classification of related singularities. To this end, we provide various examples and demonstrate the computations in Section~\ref{sec:ex}. Moreover, in Section~\ref{sec:numerical}, we establish the following result. 

\begin{theorem}
\label{thm:char} 
Let $(Y, q)$ be a smoothable Gorenstein curve singularity with $\bbG_m$-action. Then the weights of the $\bbG_m$-action on the space of differential one-forms of $Y$ can determine those of higher-order differentials of $Y$. 
\end{theorem}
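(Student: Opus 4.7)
The strategy is to reduce everything to the algebraic fact that for a Gorenstein graded ring the dualizing module is a shifted free module of rank one. First I would pass to the affine $\bbG_m$-equivariant neighborhood $\Spec R$ of $q \in Y$, where $R = \bigoplus_{d\geq 0} R_d$ is the positively graded $\bbC$-algebra produced by the test configuration of Section~\ref{sec:test}, with $R_0 = \bbC$. Since $(Y,q)$ is a one-dimensional Gorenstein singularity and $\bbG_m$ fixes $q$, graded local duality yields a $\bbG_m$-equivariant isomorphism of graded $R$-modules $\omega_R \cong R(-a)$ for a unique integer $a$ (the Goto--Watanabe $a$-invariant). Because $\omega_Y$ is invertible, tensor powers correspond to further shifts of the same free generator, giving $\omega_R^{\otimes k} \cong R(-ka)$ for every $k \geq 1$.

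Writing $\Sigma(M)$ for the multiset of $\bbG_m$-weights occurring in a graded $R$-module $M$, the identifications above give $\Sigma(\omega_R^{\otimes k}) = \Sigma(R) + ka$, and in particular $\Sigma(\omega_R) = \Sigma(R) + a$. Since $R_0 = \bbC$ and $R_d = 0$ for $d<0$, the minimum element of $\Sigma(\omega_R)$ is exactly $a$; equivalently, $a$ is the weight of the equivariant generator of $\omega_R$ at $q$. Subtracting this minimum recovers $\Sigma(R)$, and then
\[
\Sigma(\omega_R^{\otimes k}) \;=\; \Sigma(\omega_R) + (k-1)a
\]
is completely determined by $\Sigma(\omega_R)$ alone, which is the desired conclusion.

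The main obstacle I anticipate is the passage from this local graded statement to the sheafy formulation on $Y$ intended by the theorem. On the smooth locus the comparison is immediate, since a local generator $\omega_0$ of $\omega_Y$ of weight $c$ produces a generator $\omega_0^{\otimes k}$ of weight $kc$; the interesting contribution is concentrated at $q$ and is handled by the argument above, so combining the two yields the full comparison of weights on $\omega_Y$ and $\omega_Y^{\otimes k}$. A secondary technical point is that graded Gorenstein duality must be applied with an arbitrary positive grading rather than the standard one, but this is exactly the framework of Pinkham--Looijenga deformations used throughout the paper and poses no additional difficulty.
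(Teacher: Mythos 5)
Your local computation is fine as far as it goes: on the affine chart $U=\Spec R$ containing $q$, the graded Gorenstein property does give $\omega_R\cong R(-a)$, with the generator being $\eta=\sum_i u_i\,dt_i/t_i^{m_i+2}$ of weight $a=\ell=\lcm(m_1+1,\ldots,m_n+1)$, and hence $\Sigma(\omega_R^{\otimes k})=\Sigma(\omega_R)+(k-1)a$ for the full (infinite-dimensional) modules of sections over $U$. The genuine gap is exactly the step you flag and then wave away. The theorem concerns the finite-dimensional spaces $H^0(Y,m\omega_Y)$, equivalently $H^0(Y,m\omega_Y^{\log})$, of the \emph{projective} curve $Y$ (cf.\ Remark~\ref{rem:notation} and Theorem~\ref{thm:filtration}); their dimensions grow like $(2m-1)(g-1)$, so their weight multisets cannot be translates of one another, and a shift relation between the infinite multisets $\Sigma(\omega_R^{\otimes k})$ says nothing about them until you decide which elements survive the pole-order conditions at the points at infinity $y_1,\ldots,y_n$. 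Concretely, $H^0(Y,m\omega_Y^{\log})=\bigl(\bigoplus_{j=0}^{m\ell}R_j\bigr)\cdot\eta^{\otimes m}$, so the weights that are new for $m\ge 2$ are governed by $\dim R_j$ in the range $\ell<j\le m\ell$; this ``interesting contribution'' is concentrated at infinity, not at $q$, contrary to your closing remark. To finish along your lines you would still need to show that $\dim R_j$ is independent of the particular curve in that range (e.g.\ because the gap module $\widetilde{R}/R$ vanishes in weights $>\ell$, or by Riemann--Roch on a nearby smooth fiber), and that is precisely the content that is missing. Note also that the hypothesis of the theorem is the finite multiset of weights on $H^0(Y,\omega_Y)$, not $\Sigma(\omega_R)$, so as written both the input and the output of your argument are attached to the wrong objects.

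For comparison, the paper's proof (Proposition~\ref{prop:weight}) works on the smooth deformation $C$ and splits the spectrum of $H^0(C,m\omega_C^{\log})$ into the range $\lambda\le(m-1)\ell$, where the twisted line bundles still have degree $>2g-2$ so Riemann--Roch makes the multiplicities purely combinatorial in $\mu$, and the range $\lambda>(m-1)\ell$, which is the $m=1$ spectrum shifted by $(m-1)\ell$. Your free-module observation is morally the reason the high range is a pure shift, but it does not by itself produce the low range, which is where the extra weights of the higher pluricanonical spaces live.
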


In particular, we provide an explicit relation between the associated characters; see Proposition~\ref{prop:weight} for more details. 

\subsection{Singularities with bounded $\alpha$-invariants}

If a new singularity first appears in the log minimal model program for $\BM_{g}$ with respect to the divisor class $K + \alpha \delta$, where $K$ is the canonical class and 
$\delta$ is the total boundary divisor class of $\BM_g$, then $\alpha$ is called the {\em $\alpha$-invariant} of the singularity. There is a critical value $\alpha = 3/8$ that plays a significant role in this process for large genus, as explained in \cite[Section 3.4]{AFS16}. Roughly speaking, the polarization on the GIT quotient of the Chow variety of canonical curves is asymptotically proportional to $K + \frac{3}{8}\delta$. Therefore, it is an interesting question to classify Gorenstein singularities with $\alpha$-invariants above $3/8$, as asked in \cite[Problem 3.15]{AFS16}. As another application of our method, we provide the following classification. 

\begin{theorem}
\label{thm:3/8}
A smoothable isolated Gorenstein curve singularity with $\bbG_m$-action has $\alpha$-invariant $\geq 3/8$ (without dangling branches) if and only if the corresponding locus of holomorphic differentials is one of the following cases: 
\begin{enumerate}[{\rm (i)}]
\item $\mu = (0^m)$ (i.e., elliptic $m$-fold points) for $m \leq 4$. 
\item $\mu = (a, a, b, b)^{\hyp}$, $(2a, b, b)^{\hyp}$, $(2a, 2b)^{\hyp}$, $(g-1, g-1)^{\hyp}$, $(2g-2)^{\hyp}$, $(2g-2, 0)^{\hyp}$, and $(2g-2, 0, 0)^{\hyp}$, where the underlying curves are hyperelliptic, the zeros of even order are Weierstrass points, the zeros of equal order form hyperelliptic conjugacy pairs, and the ordinary marked points (zeros of order $0$) can be any points. 
\item $\mu = (4)^{\odd}$, $(4,0)^{\odd}$, $(3,1)$, $(3,1,0)$, $(2,2)^{\odd}$, $(2,1,1)$, $(1,1,1,1)$, $(6)^{\odd}$, $(6)^{\even}$, $(6,0)^{\even}$, $(5,1)$, $(4,2)^{\even}$, $(3,3)^{\nonhyp}$, $(2,2,2)^{\even}$. 
\item The locus of $h^0(C, 3p_1) = 2$ in the stratum with $\mu = (7,1)$. 
\item The locus of $h^0(C, 2p_1+p_2) = 2$ in the stratum with $\mu = (7,3)$.  
\item The locus of Weierstrass semigroup generated by $3$ and $7$ in the stratum with $\mu = (10)^{\even}$.  
\end{enumerate}
\end{theorem}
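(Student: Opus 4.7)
The plan is to convert the condition $\alpha\geq 3/8$ into an explicit numerical inequality on the signature $\mu$ (together with its spin/hyperelliptic decoration, when relevant) and then enumerate the finitely many solutions.

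First, I would recall from \cite[Section 3]{AFS16} that for a smoothable isolated Gorenstein singularity $(Y,q)$ with $\bbG_m$-action, the $\alpha$-invariant is expressed as a ratio of two $\bbG_m$-weights of the induced action along the test configuration: the weight on the Hodge contribution (i.e.\ on $H^0(Y,\omega_Y)$) versus the weight on the discriminant, which is a prescribed polynomial in the weights on pluricanonical forms. By Theorem~\ref{thm:main}, both pieces of data are recorded in the filtration attached to the test configuration, which is combinatorially determined by $\mu$. Combining this with Theorem~\ref{thm:char} and Proposition~\ref{prop:weight}, the pluricanonical weights are completely determined by the one-form weights, so the entire computation of $\alpha$ becomes a closed-form expression in $g$ and in the entries $m_i$. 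The first step is to write down this expression and reduce $\alpha\geq 3/8$ to a single numerical inequality.

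Second, I would enumerate. The hyperelliptic subcase is handled separately: the associated $(Y,q)$ is a hyperelliptic double point whose $\bbG_m$-weights force the underlying curve to be hyperelliptic, the even-order zeros to be Weierstrass points, and equal-order zeros to be paired under the hyperelliptic involution; the inequality then characterizes exactly the signatures in item (ii), and marked points of order $0$ may be freely adjoined because they do not alter $(Y,q)$. For the non-hyperelliptic subcase, the inequality bounds both $g$ and $\sum m_i$ from above, so only finitely many signatures can occur. The surviving ones intersect the nonvarying list of Theorem~\ref{thm:nonvarying} in items (i) and (iii); item (i) is cut out by $\mu=(0^m)$ (where the singularity is an elliptic $m$-fold point and the branch bound forces $m\leq 4$). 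The sporadic items (iv)--(vi) arise by restricting to a Weierstrass sublocus inside a larger, varying stratum: by Theorem~\ref{thm:main} such a specialization produces a strictly different Gorenstein singularity whose weights are shifted in the correct direction to cross the threshold $3/8$, and the specific semigroups (e.g.\ $\langle 3,7\rangle$ in (vi)) are precisely those for which the shift suffices.

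The main obstacle I anticipate is the completeness of the enumeration, namely ruling out every Weierstrass sublocus of every stratum not in the list. The key observation is monotonicity: imposing an additional gap in the Weierstrass semigroup of a marked point increases the associated weights in a controlled way along the filtration of Theorem~\ref{thm:main}, so once $g$ or $\sum m_i$ exceeds the explicit bound coming from the one-form inequality, no further specialization can restore $\alpha\geq 3/8$. The ``without dangling branches'' caveat must also be tracked on the boundary of the versal deformation space, since a dangling branch corresponds to a rational tail that perturbs the weight on the discriminant without changing the Hodge weight and would artificially inflate $\alpha$. Once these monotonicity and boundary controls are in place, matching the remaining finite list of candidates against items (i)--(vi) is straightforward bookkeeping using the explicit singularities and semigroups produced by the construction.
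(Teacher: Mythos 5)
Your opening reduction is on target: the paper likewise converts $\alpha\geq 3/8$ via \eqref{eq:alpha} and Proposition~\ref{prop:weight}~(iv) into the single inequality $\chi_1^{\log}=\sum_{\lambda=1}^{\ell}h^0(C,D_\lambda)\geq \tfrac14(2g-2+n)\ell$. But there is a genuine gap right at the next step. You assert that $\alpha$ "becomes a closed-form expression in $g$ and in the entries $m_i$," and that the rest is enumeration of signatures. It is not: $\chi_1^{\log}$ is a sum of the cohomology groups $h^0(C,D_\lambda)$ of twisted divisors supported at the zeros, and these depend on the gonality of $C$ and on the Brill--Noether/Weierstrass behavior of the marked points, not only on $\mu$. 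This is precisely why the answer contains spin and hyperelliptic decorations and the subloci (iv)--(vi). The tool you are missing is Clifford's theorem (and its strict form \eqref{eq:Clifford-nonhyp} for nonhyperelliptic curves): it gives the universal upper bound $\chi_1^{\log}\leq\tfrac12(g+1)\ell$, which forces $n\leq 4$, and --- crucially --- when the required inequality forces equality or near-equality in Clifford for a divisor of intermediate degree, it forces the curve to be hyperelliptic, the even-order zeros to be Weierstrass points, and equal-order zeros to be conjugate pairs. Your proposal instead takes the hyperelliptic structure as input from the shape of $(Y,q)$, which inverts the logical flow and gives no mechanism to prove that these are the \emph{only} hyperelliptic configurations that survive.

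The second gap is that the entire enumeration, which is the bulk of the paper's proof, is compressed into "straightforward bookkeeping." In the paper this requires: parity counts $N^{+}$ of the quantities $\sum_i(l_{\lambda,i}-1)$ to sharpen Clifford in the nonhyperelliptic cases; a number-theoretic analysis (including a Chinese-remainder-theorem count for coprime $k_1,k_2$) to bound the possible $(m_1,m_2)$ when $n=2$; and ad hoc geometric arguments with plane quintic and sextic models to rule out borderline candidates such as $(7,5)$, $(6,2)^{\even}$, $(6,4)^{\odd}$, $(6,4)^{\even}$ and to realize the surviving subloci in (iv)--(vi). Your monotonicity remark (specialization of the Weierstrass behavior only increases $\chi_1^{\log}$, hence only increases $\alpha$) is correct in direction and does justify that Clifford's cap is an absolute obstruction, but it does not by itself decide the borderline signatures where the cap and the threshold nearly coincide; those are exactly the cases requiring the detailed arguments above. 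Finally, the "dangling branches" issue you raise is a red herring here: the theorem is stated without dangling branches by definition, and the dangling case is recovered afterwards by the explicit shift of $\chi_2^{\log}$ recorded in Section~\ref{subsec:alpha}, so nothing needs to be tracked on the boundary of the versal deformation space.
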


We are also able to describe the defining equations for all the singularities that appear in Theorem~\ref{thm:3/8}. Some of these singularities are well-studied in the literature, while it seems the others can be difficult to figure out without invoking our perspective. Just for the reader to get a feel, for example, the defining equations for the unique singularity corresponding to the nonvarying stratum $\bbP\calH(6,2)^{\odd}$ are given by 
$$ (xz, xw, xu, zu - w^2, yw - u^2, yz - wu, u^2 - z^3, yu - z^2w, y^2 - x^3 - zw^2). $$

Additionally, the weights, characters, and $\alpha$-invariants for the case with dangling branches can be fully determined by those without dangling branches; see~\cite[Corollary 3.3 and Corollary 3.6]{AFS16} and also Section~\ref{subsec:alpha} for a detailed definition and relation between the two cases. Indeed, if we consider a higher critical value $\alpha = 5/9$, then the proof of Theorem~\ref{thm:3/8} can be significantly simplified, for both cases of with or without dangling branches, thus verifying the predictions of such singularities with $\alpha \geq 5/9$ in \cite[Table 3]{AFS16}. That said, in this paper we focus on the $\alpha$-invariants for the case without dangling branches so as to treat all zeros of holomorphic differentials uniformly. 

\subsection{Slopes of singularities and effective divisors in $\BM_g$}

For a one-parameter family $B$ of stable curves of genus $g$, we define its {\em slope} to be 
$$s(B) \coloneqq \frac{ \deg_B \delta}{\deg_B \lambda_1},$$ 
where 
$\delta$ is the total boundary divisor class of $\BM_g$ and $\lambda_1$ is the first Chern class of the Hodge bundle. In particular, if $B$ represents the class of a {\em moving curve} in $\BM_g$, then the slope of $B$ can provide a lower bound for the slopes of effective divisors in $\BM_g$ through their intersection numbers. This is due to the fact that an effective divisor $D$ cannot contain the union of such moving curves, and hence $D$ and $B$ must have a nonnegative intersection number. 

Determining the lower bound for the slopes of effective divisors in $\BM_g$ is an important, long-standing problem; see~\cite{CFM13} for a survey discussion. In our context, motivated by~\cite[Theorem 4.2]{AFS16}, we can similarly define the slope for singularity classes $(Y, q)$ that are associated to $\bbP\calH(\mu)$. Moreover, if $\bbP\calH(\mu)$ dominates $\calM_g$ and the discriminant locus of singular deformations in ${\rm Def}^{-}(Y, q)$ consists of generically nodal curves, then we can use the slope of $(Y,q)$ to bound the slopes of effective divisors in $\BM_g$. 

\begin{theorem}
\label{thm:slope-main}
Let $\mu$ be a partition of $2g-2$ such that $\mu$ has at least $g-1$ positive entries and $\mu \neq (2^{g-1})^{\even}$. Let $(Y,q)$ be a singularity class associated to a generic canonical divisor of type $\mu$. If the discriminant locus of singular deformations in ${\rm Def}^{-}(Y, q)$ is a $\bbQ$-Cartier divisor parameterizing generically nodal curves, then the slope of $(Y, q)$ is a lower bound for the slopes of effective divisors in $\BM_g$.  
\end{theorem}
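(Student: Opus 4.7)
The plan is to implement a standard sweeping-family / moving-curve argument, using the projectivized versal deformation $\bbP\operatorname{Def}^-(Y,q)$ as the source of covering one-parameter families of stable curves in $\BM_g$.

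The first step is to verify that $\bbP\calH(\mu) \to \calM_g$ is dominant. The hypothesis ``at least $g-1$ positive entries in $\mu$'' ensures the dimension inequality $\dim \bbP\calH(\mu) \geq \dim \calM_g$, and the excluded case $\mu = (2^{g-1})^{\even}$ is precisely the borderline stratum whose image is forced into the proper sublocus of curves admitting an effective even theta characteristic. Combining this with Theorem~\ref{thm:main}, which identifies $\bbP\operatorname{Def}^-_s(Y,q)$ with an open subset of $\bbP\calH(\mu)$ containing the generic point, the open subset of smooth deformations of $(Y,q)$ still dominates $\calM_g$.

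The second step builds the covering family. Take a general weighted line $B \subset \bbP\operatorname{Def}^-(Y,q)$ and consider the associated one-parameter family of Gorenstein curves. The $\bbQ$-Cartier hypothesis on the discriminant $\Disc$ makes $\Disc \cdot B$ well-defined, and the generically nodal hypothesis guarantees that the singular fibers over a generic $B$ are irreducible nodal curves. The induced moduli map $\varphi : B \to \BM_g$ therefore satisfies $\delta_0 \cdot \varphi_{*}B = \deg_B \Disc$ and $\delta_i \cdot \varphi_{*}B = 0$ for $i \geq 1$, because the loci in $\bbP\operatorname{Def}^-(Y,q)$ parameterizing reducible nodal or worse singular curves lie in codimension at least two and are avoided by a general $B$. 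As $B$ varies in $\bbP\operatorname{Def}^-(Y,q)$ and as $(C,\omega)$ varies in $\bbP\operatorname{Def}^-_s(Y,q)$, the images $\varphi_{*}B$ sweep out a dense open subset of $\calM_g$, so $\varphi_{*}B$ is a moving curve in $\BM_g$.

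The final step is the numerical slope inequality. Write an arbitrary effective divisor class on $\BM_g$ as $D \equiv a\lambda - \sum_{i \geq 0} b_i \delta_i$ with $a, b_i \geq 0$, and choose $B$ so that $\varphi_{*}B \not\subset D$, which is possible since $\varphi_{*}B$ is moving. Then $D \cdot \varphi_{*}B \geq 0$, and combining with $\delta_i \cdot \varphi_{*}B = 0$ for $i \geq 1$ and $\delta_0 \cdot \varphi_{*}B = \delta \cdot \varphi_{*}B$ yields
\[
s(D) \;\geq\; \frac{a}{b_0} \;\geq\; \frac{\delta \cdot \varphi_{*}B}{\lambda \cdot \varphi_{*}B} \;=\; s(Y,q),
\]
where the last equality is the definition of the slope of $(Y,q)$ as the ratio of the discriminant degree to the Hodge bundle degree along $B$, computable from the $\bbG_m$-weights in the spirit of~\cite[Theorem 4.2]{AFS16}. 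The main obstacle will be the stratification analysis in the second step: one must show that the discriminant $\Disc \subset \bbP\operatorname{Def}^-(Y,q)$ has, outside a codimension-$\geq 2$ locus, only irreducible nodal fibers, so that a generic $B$ contributes nothing to $\delta_i \cdot \varphi_{*}B$ for $i \geq 1$. A secondary subtlety is confirming the dominance claim for all the borderline strata in the first step, and in particular the precise role of the exclusion $\mu \neq (2^{g-1})^{\even}$.
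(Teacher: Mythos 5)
Your proposal follows essentially the same route as the paper: a moving-curve argument using complete one-parameter families in $\bbP{\rm Def}^{-}(Y,q)$, dominance of $\bbP\calH(\mu)\to\calM_g$ for partitions with at least $g-1$ positive entries (excluding $(2^{g-1})^{\even}$), and the identification $s(B)=s(Y,q)$ via \cite[Theorem 4.2]{AFS16} and the definition~\eqref{eq:slope}. Two points deserve attention. First, your assertion that $\delta_i\cdot\varphi_{*}B=0$ for $i\geq 1$ rests on the claim that reducible nodal fibers occur only in codimension at least $2$ in $\bbP{\rm Def}^{-}(Y,q)$; this does not follow from the hypothesis, which only requires the discriminant to parameterize generically nodal (not necessarily irreducible) curves, and indeed the refined statement in the paper only excludes worse-than-nodal fibers in codimension one. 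Fortunately this claim is also unnecessary: writing $D\equiv a\lambda_1-\sum_{i\geq 0} b_i\delta_i$ and using $D\cdot\varphi_{*}B\geq 0$ together with $\deg_B\delta_i\geq 0$ for every $i$ already gives $a/\min_i b_i \geq \deg_B\delta/\deg_B\lambda_1 = s(Y,q)$ with $\delta$ the \emph{total} boundary, which is exactly how the slope is defined in Section~\ref{sec:slope}; so you should drop the irreducibility step rather than try to justify it. Second, dominance of $\bbP\calH(\mu)\to\calM_g$ is not a consequence of the dimension inequality alone (a stratum of the right dimension could a priori map into a proper subvariety); the paper settles this by citing \cite[Proposition 4.1]{C10} and \cite[Theorem 5.7]{G18}, and you should invoke these results rather than leave the point as a ``subtlety,'' noting that $(2^{g-1})^{\even}$ is excluded precisely because its image is the theta-null divisor.
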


In Section~\ref{sec:slope}, we provide a more detailed statement of the above result in Theorem~\ref{thm:slope}. We will also compute the slopes of singularities for a number of strata; see Example~\ref{ex:nonvarying}, Example~\ref{ex:BN}, Example~\ref{ex:spin-odd}, and Example~\ref{ex:weierstrass}. 

\subsection{Affine and tautological geometry of subcanonical points}

Finally, we turn to the affine and tautological geometry of moduli spaces of differentials. It was shown in \cite{C19} that the strata of (strictly) meromorphic canonical divisors with prescribed zero and pole orders are affine varieties by using relations of tautological divisor classes in the strata. However, the same question remains open for the holomorphic case. Therefore, it is meaningful to study the affine and tautological geometry for each $\bbP{\rm Def}^{-}_s(Y, q)$ in the decomposition of the strata $\bbP \calH (\mu)$ of holomorphic differentials, and the results can help us better understand the global geometry of $\bbP \calH (\mu)$. 

Here we concentrate on the case of $(Y, q)$ being a monomial singularity, where $\bbP{\rm Def}^{-}_s(Y, q)$ can be identified with the locus $\calM_{g,1}^H$ of Weierstrass points $(C, p)$ with a fixed semigroup $H$. In particular, $2g-1\not\in H$ is equivalent to that $(2g-2)p$ is a canonical divisor, i.e., $(C, p)\in \bbP\calH(2g-2)$. Such a semigroup $H$ is called {\em symmetric} and such a point $p$ is called a {\em subcanonical point}. 

\begin{theorem}
\label{thm:Weierstrass}
Let $H$ be a semigroup in genus $g$ such that $2g-1\not\in H$. Then the tautological ring of $\calM_{g,1}^H$ is trivial in all positive degrees. Moreover, an irreducible component $\calM_{g,1}^{H^{\circ}}$ of $\calM_{g,1}^H$ is an affine variety if and only if the boundary of $\calM_{g,1}^{H^{\circ}}$ is pure codimension $1$. 
\end{theorem}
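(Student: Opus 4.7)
The plan is to use the identification from Theorem~\ref{thm:main} to realize $\calM_{g,1}^H$ as an open subset of a weighted projective space, and then analyze both assertions in that concrete model. By Pinkham's construction as refined in Theorem~\ref{thm:main}, when $H$ is symmetric (equivalently $2g-1\notin H$), the associated singularity $(Y,q)$ is the unique monomial unibranch Gorenstein singularity whose semigroup of regular functions is $H$, and $\calM_{g,1}^H \cong \bbP\Def^-_s(Y,q)$ sits inside the weighted projective space $\bbP := \bbP\Def^-(Y,q)$ as an open subset, whose complement is the discriminant locus $\Delta$ parameterizing singular deformations. The rational Chow ring $A^*(\bbP)_{\bbQ}$ is generated by a single hyperplane class $h$.

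For the triviality of the tautological ring, the point is that the $\bbG_m$-action on $\Def^-(Y,q)$ rescales the universal differential, so the Hodge class $\lambda_1$ and the cotangent-line class $\psi_1$ both descend from characters of $\bbG_m$; their restrictions to $\calM_{g,1}^H$ are therefore rational multiples of $h|_{\calM_{g,1}^H}$, and the $\kappa_i$ classes reduce to these via Mumford's formula. Since $(Y,q)$ is Gorenstein and hence smoothable as an isolated curve singularity, the discriminant $\Delta$ is an effective Cartier divisor on $\bbP$ whose class is a positive multiple of $h$. The excision sequence then forces $h|_{\calM_{g,1}^H}=0$ in rational Chow, killing every tautological class in positive degree; restricting further to any irreducible component $\calM_{g,1}^{H^\circ}$ preserves this vanishing.

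For the affine criterion, the direction "affine $\Rightarrow$ pure codimension one boundary" is classical: in a complete normal variety the complement of an affine open must be pure codimension one. Conversely, if the boundary $E := X\setminus \calM_{g,1}^{H^\circ}$ of the closure $X := \overline{\calM_{g,1}^{H^\circ}}\subset \bbP$ is pure codimension one, then $E$ coincides with the honest Cartier divisor cut out of $X$ by $\Delta$, whose class in $A^1(\bbP)_{\bbQ}$ is a positive multiple of the hyperplane class; its restriction to $X$ is therefore a positive multiple of an ample class, so $E$ is an ample Cartier divisor on $X$ and $X\setminus E = \calM_{g,1}^{H^\circ}$ is affine by Serre's criterion. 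The main technical subtlety will lie in the first part, namely verifying that every tautological class truly descends from a $\bbG_m$-equivariant line bundle on $\Def^-(Y,q)$ and hence is a multiple of $h$; this requires identifying the universal pointed curve over $\calM_{g,1}^H$ with the smooth locus of the universal deformation of $(Y,q)$, tracking the $\bbG_m$-weights throughout. Once this descent is in place, the remaining arguments are elementary consequences of the Chow-theoretic structure of weighted projective space.
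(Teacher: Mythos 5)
Your proposal takes a genuinely different route from the paper, but it rests on an assumption that fails in general and is therefore not a complete proof. The paper never uses the weighted projective space model for this theorem: it proves the tautological vanishing inside $\calM_{g,1}$ by computing the Hodge bundle filtration coming from the semigroup (Lemma~\ref{le:class}, giving $\lambda_1=(\sum_i b_i)\psi$), combining it with the relation $12\lambda_1=\kappa_1=(4g^2-4g)\psi$ valid on $\bbP\calH(2g-2)$ and the fact that the tautological ring of a stratum is generated by $\psi$ alone, to conclude $\psi=0$; the affine criterion is then handled in the Deligne--Mumford compactification using Cornalba's ampleness of $\kappa_1+\psi$ on $\BM_{g,1}$ together with Goodman's criteria. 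The decisive gap in your argument is the claim that $\bbP\Def^{-}(Y,q)$ is a weighted projective space with Chow ring generated by a hyperplane class. This holds only when $T^2=0$ and $\dim T^{1,-}=\dim\Def^{-}(Y,q)$; for a general symmetric semigroup $H$ the monomial singularity can be obstructed, and the paper itself emphasizes this (Remark~\ref{rem:T2}, and the fact that Theorem~\ref{thm:T2=0} only covers $g\le 5$ in the unibranch case). When $T^2\neq 0$, the space $\Def^{-}(Y,q)$ need not be irreducible, normal, or have cyclic divisor class group, so neither the excision argument killing $h$ nor the ampleness of the discriminant restricted to $X$ goes through; moreover the $\bbQ$-Cartier property of the discriminant, which you invoke, is an unverified hypothesis (the paper treats it as such in Theorem~\ref{thm:slope}).

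Two further points. First, the descent of all tautological classes to multiples of the hyperplane class is asserted rather than proved; the paper replaces this by the concrete identity $\lambda_1=(\sum_i b_i)\psi$ and the single-generator statement for the tautological ring of the stratum, which is where the real content lies. Second, the theorem's boundary condition refers to the closure in $\BM_{g,1}$, whereas you take the closure inside $\bbP\Def^{-}(Y,q)$; being pure codimension one is not a priori the same condition in the two compactifications, so even if your model applied you would be proving a different statement in the ``if'' direction. The ``affine $\Rightarrow$ divisorial boundary'' direction of your argument is fine and matches the paper's use of Goodman's result.
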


In Section~\ref{sec:Weierstrass}, we provide more detailed descriptions for the above results in Theorem~\ref{thm:(2g-2)-tauto} and Theorem~\ref{thm:(2g-2)-affine}. Additionally, we apply the results to study the varying minimal spin strata in genus six; see Example~\ref{ex:(10)-odd} and Example~\ref{ex:(10)-even}. 

\subsection{Further directions} 
The explicit defining equations of the nonvarying singularities we have obtained pave an avenue towards determining the $K(\pi, 1)$-property for the corresponding strata of differentials as well as understanding special loci in Teichm\"uller dynamics. Note that the Coxeter--Dynkin diagrams of $ADE$ singularities and the corresponding curve systems also appear in the construction of Teichm\"uller curves; see~\cite[Section 6]{CTM23} for more details.

Additionally, certain nonreduced Gorenstein curves (such as ribbons) can naturally appear in the log minimal model program for $\BM_g$; see \cite[Section 3.3]{AFS16}. Therefore, it would be interesting to find a correspondence between holomorphic differentials and nonreduced Gorenstein curves. Similarly, we can ask how other singularities, e.g., those that are not Gorenstein, or without $\bbG_m$-action, or of higher dimension, can appear in the story, as well as how their deformation spaces fit together. From the perspective of differentials, we can also study other variants of the construction by using meromorphic differentials or higher-order differentials instead of holomorphic one-forms. 

The weight spectra we have computed for the spaces of holomorphic differentials on Gorenstein curves with $\bbG_m$-action are also closely related to the Harder--Narasimhan polygons of vector bundles. In particular, similar ideas have been successfully employed to study the Lyapunov spectra of Teichm\"uller geodesic flows; see~\cite{YZ13, Y18, EKMZ18, CY25}. Indeed, our construction is similar to the 
Harder--Narasimhan stratification for moduli spaces of vector bundles and the Bialynicki--Birula stratification for moduli spaces of Higgs bundles. Moreover, analogous calculations for the weight spectra of deformation spaces of singularities are prevalent in the study of Coulomb branches in Seiberg--Witten theory. It would be interesting to find out the precise relations as well as to draw on novel ideas and tools between these subjects.   

We expect that the results and techniques developed in this paper can lead to further advances on many related topics, particularly for the construction of new compactifications of moduli spaces of curves and differentials, their topology, birational and affine geometry, as well as the computations of associated invariants. We plan to explore these questions in future work.  

\subsection{Outline of the paper} In Section~\ref{sec:test}, we carry out the construction from a holomorphic differential of given zero type to a Gorenstein curve singularity with $\bbG_m$-action. Moreover, we verify the desired properties of this construction and prove Theorem~\ref{thm:main}. 

In Section~\ref{sec:ex}, through a series of examples, we demonstrate explicitly how to compute the weights and characters of the $\bbG_m$-action on the space of differentials of the resulting Gorenstein curves. Although most of the examples coincide with those in \cite{AFS16}, here the focal point is our unifying method through the filtrations of the Hodge bundle that arise in the construction of test configurations. 

In Section~\ref{sec:numerical}, we explore the numerical properties of the weights and characters of the $\bbG_m$-action. In particular, we show that the weight spectrum of the $\bbG_m$-action on the space of one-forms determines that of higher-order differentials, thus proving Theorem~\ref{thm:char}. 

In Section~\ref{sec:nonvarying}, we study the singularities associated to the nonvarying strata of holomorphic differentials and prove Theorem~\ref{thm:nonvarying}. We also study the behavior of the $K(\pi,1)$-property for singularities obtained by adding an ordinary marked point, which proves Theorem~\ref{thm:US}. In the course of the proofs, we describe the invariants and defining equations for these singularities. Additionally, we study several varying strata and describe the interesting geometry for the singularities associated to them. 

In Section~\ref{sec:3/8}, we classify smoothable isolated Gorenstein curve singularities with $\bbG_m$-action whose $\alpha$-invariants are bigger than or equal to $3/8$ (in the setting of no dangling branches), which proves Theorem~\ref{thm:3/8}. Additionally, we describe the invariants and defining equations for these singularities as well. 

In Section~\ref{sec:slope}, we define the slopes of  singularities and utilize them from the viewpoint of bounding the slopes of effective divisors in $\BM_g$, as in Theorem~\ref{thm:slope-main}. 

Finally, in Section~\ref{sec:Weierstrass}, we study loci of Weierstrass points with fixed semigroups and prove Theorem~\ref{thm:Weierstrass} for symmetric semigroups and subcanonical points. 

\subsection*{Acknowledgements}  The authors would like to thank Dan Abramovich, Jarod Alper, Luca Battistella, Sebastian Bozlee, Iacopo Brivio, Qile Chen, Yifei Chen, Giulio Codogni, David Eisenbud, Yu-Wei Fan, Gavril Farkas, Maksym Fedorchuk, Riccardo Giannini, Samuel Grushevsky, Paul Hacking, Brendan Hassett, David Holmes, Dihua Jiang, Bochao Kong, Hannah Larson, Yongnam Lee, Yuchen Liu, Eduard Looijenga, Michael L\"onne, Martin M\"oller, Adrian Neff, Yuji Odaka, Yu Qiu, Dhruv Ranganathan, Yongbin Ruan, Song Sun, Luca Tasin, Filippo Viviani, Jonathan Wise, Alex Wright, Dan Xie, Junyan Zhao, Anton Zorich, Huaiqing Zuo, and Kang Zuo for inspiring discussions on many related topics. 

Part of the work was carried out when the first-named author visited Zhejiang University in Summer 2024. The authors thank Zhejiang University for providing a wonderful working environment.  

\section{Singularities and test configurations}
\label{sec:test}

In this section, we first review some classical results about deformations of singularities with $\bbG_m$-action; see~\cite{P74} and~\cite[Appendix]{L84} for more details and other related references.  Then we study a correspondence between holomorphic differentials with prescribed orders of zeros and Gorenstein singularities with $\bbG_m$-action by using test configurations. 

\subsection{Deformations of singularities with $\bbG_m$-action}
\label{subsec:def}

Throughout the paper, we use $(Y, q)$ to denote (the analytic germ of) a reduced affine curve $Y$ with a unique isolated singularity at $q$. Suppose $(Y, q)$ admits a (good) $\bbG_m$-action that acts freely on $Y\setminus q$, with the convention that it acts with all positive or all negative weights on the maximal ideal of $\widehat{\calO}_{Y,q}$. In other words, $\widehat{\calO}_{Y,q}\cong \bbC\{y_1, \ldots, y_n \} / I$, where the weights $a_i$ of $y_i$ are either all positive or all negative and $I$ is a graded ideal generated by weighted homogeneous polynomials. The (good) $\bbG_m$-action on $(Y, q)$ is thus given by 
$\lambda\cdot (y_1,\ldots, y_n) = (\lambda^{a_1}y_1, \ldots, \lambda^{a_n}y_n)$. Such singularities are also called {\em quasi-homogeneous}. 

Pinkham \cite[Section 2]{P74} showed that $(Y, q)$ has a {\em $\bbG_m$-equivariant versal deformation space}, which we denote by ${\rm Def}(Y, q)$.
We remark that Pinkham's definition of ``versal'' includes the extra condition that the tangent space to the base space of the deformation has minimal dimension, which is commonly called {\em miniversal} or {\em semi-universal} in the literature; see~\cite[Definition (1.4)]{P74} and \cite[Section 2]{P78}. 

Let $T^1$ denote the space of {\em first-order infinitesimal deformations} of $(Y, q)$.  In general, we have the following dimension bounds: 
$$\dim T^1 \geq \dim {\rm Def}(Y, q) \geq \dim T^1 - \dim T^2,$$ 
where $T^2$ is the space of {\em obstructions} against lifting; see~\cite[Corollary 1.32]{GLS}. We remark that 
$\dim T^1$ is called the {\em Tjurina number} of the singularity. 

Pinkham also showed that 
$$T^1(Y,q) = \sum_{\nu = -\infty}^{\infty} T^1(\nu)$$ 
has a natural {\em grading}. We denote by ${\rm Def}^{-}(Y, q)$ the subspace of ${\rm Def}(Y, q)$ corresponding to 
{\em negatively graded deformations}, i.e., the tangent space to ${\rm Def}^{-}(Y, q)$ is $T^{1, -} = \sum_{\nu < 0} T^1(\nu)$. Both the family of negative deformations and the base space ${\rm Def}^{-}(Y, q)$ are defined by polynomials. By Pinkham's convention, the maximal ideal of $0$ in the coordinate ring of ${\rm Def}^{-}(Y,q)$
is {\em positively} graded due to the duality between the tangent vectors of $T^1$ and the deformation parameters of the base; see~\cite[(2.9)]{P74}. We denote by 
$${\rm Def}^{-}_s(Y, q)\subset {\rm Def}^{-}(Y, q)$$ 
the open subset of {\em smooth deformations with negative grading}. 

In particular, if $\dim T^{1, -} = \dim {\rm Def}^{-}_s(Y, q)$ and $T^2= \{0\}$, then the {\em weighted projective space}
$[({\rm Def}^{-}(Y, q)\setminus 0) / \bbG_m] \cong \bbP (T^{1, -})$ provides a {\em compactification} for the subset 
\begin{align}
\label{eq:def_s}
\bbP {\rm Def}^{-}_s(Y, q)\coloneqq [ ({\rm Def}^{-}_s(Y, q)\setminus 0) / \bbG_m] 
\end{align}
of negatively graded smooth deformations.  

\begin{example}[Deformations of the ordinary cusp]
\label{ex:cusp}
For the reader's convenience, we illustrate the above terms through the well-known example of the ordinary cusp $(Y, q)$ defined by $X_1^3 - X_2^2$; see~\cite[(1.16)]{P74}. The $\bbG_m$-action acts with weight $2$ on $X_1$ and weight $3$ on $X_2$. The miniversal deformation of the cusp is given by 
$$ F = X_1^3 - X_2^2 + t_1 X_1 + t_2$$ 
over $R = \bbC [t_1, t_2]$. We assign to $t_1$ and $t_2$ the weights $4$ and $6$, respectively, and then $F$ becomes a homogeneous equation. In this case, $t_1$ and $t_2$ have positive weights, and hence by Pinkham's convention, the miniversal deformation of the cusp has negative grading only. Next, we replace $t_1$ and $t_2$ by  $t_1 X_0^{4}$ and $t_2 X_0^6$, respectively, where $X_0$ is a new variable with weight $1$. We obtain 
$$ \overline{F} = X_1^3 - X_2^2 + t_1 X_0^{4}X_1 + t_2 X_0^6 \in R [X_0, X_1, X_2]. $$
Consider 
$$ {\rm Proj} (R[X_0, X_1, X_2] / (\overline{F})) \to {\rm Def}^{-}(Y, q)\coloneqq \Spec R, $$
where $R[X_0, X_1, X_2] / (\overline{F})$ is graded in the $X_i$'s only and $\bbG_m$ acts on $R$ as before. It is easy to see that all fibers above points of the same $\bbG_m$-orbit in ${\rm Def}^{-}(Y, q)$ are isomorphic. Indeed, all fibers are smooth elliptic curves, except the original cuspidal curve $(Y, q)$ above $(t_1, t_2)$ and
a family of rational nodal curves above a unique $\bbG_m$-orbit in ${\rm Def}^{-}(Y, q)$. Additionally, there is a natural section obtained by setting $X_0 = 0$. 
Therefore, the quotient $[({\rm Def}^{-}(Y, q) \setminus 0)/\bbG_m]$ thus provides the usual compactification of the affine $j$-line for parameterizing elliptic curves, where the boundary of the compactification parameterizes the isomorphism class of rational nodal curves. 
\end{example}

\begin{example}[Deformations with repulsive $\bbG_m$-action]
This example was already noted in \cite[(13.1)]{P74}. Consider the plane curve singularity $(Y, q)$ defined by $X_1^5 - X_2^4$. It is a special case of monomial curve singularities whose associated semigroup is generated by $4$ and $5$; see~Section~\ref{subsec:monomial} later for a detailed discussion on monomial curve singularities. Consider the deformation $X_1^5 - X_2^4 + tX_1^3 X_2^2$. In this case, $X_1$ and $X_2$ have weights $4$ and $5$ under the $\bbG_m$-action, respectively. Therefore, the deformation parameter $t$ has weight $-2$. In other words, the corresponding tangent vector in $T^1$ has positive grading $2$, hence not in ${\rm Def}^{-}(Y,q)$. As a result, the $\bbG_m$-action along this direction is repulsive, which is not a good $\bbG_m$-action. 
\end{example}

Next, following Looijenga's discussion~\cite[Appendix]{L84}, we let $R$ be the {\em affine coordinate ring} of $Y$, i.e., $Y = \Spec R$, where $R = \bigoplus_{k=0}^{\infty} R_k$ is graded with $R_0 = \bbC$.  The graded structure of $R$ corresponds to the $\bbG_m$-action on $Y$, given by $\lambda \cdot f = \lambda^k f$ for $\lambda \in \bbG_m$ and $f\in R_k$. Let $ t = (1, 0) \in R_0 \oplus R_1$. Then $Y$ has a {\em standard projectivization} given by $\overline{Y} \coloneqq \Proj R[t]$. We define the {\em divisor at infinity} to be 
\begin{align}
\label{eq:infinity}
Y_{\infty}\coloneqq \overline{Y} - Y,
\end{align}
which is the ample divisor defined by the hyperplane $t$. This construction also works in a relative situation. 

Looijenga showed that the part of the negatively graded miniversal deformation ${\rm Def}^{-}(Y, q)$ is a final object in the category of deformations of $(Y, q)$ with good $\bbG_m$-action; see~\cite[(A.2) and (A.6)]{L84}. Furthermore, for $X \in \bbP {\rm Def}^{-}(Y, q)$ with the divisor at infinity $X_{\infty}$, we have the isomorphism of graded algebras $R_{\overline{X}} / t R_{\overline{X}} \cong R$, where 
$R_{\overline{X}}\cong \bigoplus_{k=0}^{\infty} H^0(\calO_{\overline{X}} (k X_{\infty})) $ and $t\in  H^0(\calO_{\overline{X}} (X_{\infty}))$ is the element corresponding to $1$; see~\cite[(A.5)]{L84}. 

\begin{remark}
\label{rem:notation}
In the above context, due to the $\bbG_m$-action, the analytic germ of the singularity at $q\in Y$ and the projective curve $\overline{Y}$ determine the isomorphism classes of each other; see \cite[Proposition 2.4 and Remark 2.5]{AFS16}. Therefore, we do not need to distinguish between the two notions. In particular, to simplify notations, from now on we will also denote by $Y$ the standard projectivization after adding the divisor at infinity as in~\eqref{eq:infinity}.  
\end{remark}

\begin{remark}
\label{rem:smoothing}
In general, ${\rm Def}(Y,q)$ can be reducible. An irreducible component of ${\rm Def}(Y,q)$ is called a {\em smoothing component} if its generic fiber is smooth. If $(Y,q)$ is a smoothable reduced curve singularity, then all smoothing components of $(Y,q)$ have the same dimension, which is called the {\em Deligne number}. Moreover, if $(Y,q)$ is an isolated Gorenstein curve singularity with good $\bbG_m$-action (i.e., quasi-homogeneous), then the Deligne number is equal to the {\em Milnor number} $2g+n-1$, where $g$ is the arithmetic genus of the projectivization of $Y$ and $n$ is the number of branches. We refer to \cite[Section 2.6]{G20} for a survey about these concepts and their relations. 
\end{remark}

\subsection{From differentials to singularities}
\label{subsec:construction}

Let $\mu = (m_1, \ldots, m_n)$ be a partition of $2g-2$, where $m_i \geq 0$ for all $i$. Given any $(C, p_1, \ldots, p_n) \in \bbP\calH (\mu)$, we will associate to it an isolated curve singularity. To this end, we define the following parameters that will be used frequently throughout the paper: 
$$\ell \coloneqq \lcm (m_1 + 1, \ldots, m_n+1) \quad {\rm and} \quad a_i \coloneqq \frac{\ell}{m_i + 1}$$ 
for $i = 1,\ldots, n$. 

Consider the trivial one-parameter family 
$$C_{\bbA^1}\coloneqq C\times \bbA^1$$ 
with base parameter $t$ and fiber parameters $x_i$ at each $p_i$. We perform a {\em weighted blowup} at each $p_i$ in the central fiber over $t=0$, where the local ideal of the blowup is $(x_i, t^{a_i})$. In particular, the resulting exceptional curve $E_i\cong \bbP^1$ is attached to the proper transform $\tilde{C}_0\cong C$ at a node of local type $x_it_i = t^{a_i}$ for $i = 1, \ldots, n$, where $t_i$ is the local coordinate of $E_i$ at the corresponding node.   

Let $\tilde{\pi}\colon \tilde{\calC}\to \bbA^1$ be the family after the blowup and $\tilde{P}_i$ the proper transform of the section $P_i = \{p_i\} \times \bbA^1$. Define the {\em twisted relative log dualizing bundle} 
\begin{align}
\label{eq:twisted}
\tilde{\calL} \coloneqq \omega_{\tilde{\pi}}\left(\sum_{i=1}^n \tilde{P}_i + \ell \tilde{C}_0\right).
\end{align}
It restricted to $\tilde{C}_0$ is isomorphic to 
$$\omega_{\tilde{\pi}}|_{\tilde{C}_0} \left( - \sum_{i=1}^n (\ell / a_i) p_i\right) \cong \omega_C \left(-\sum_{i=1}^n m_i p_i\right) \cong \calO_C.$$
Moreover, $\tilde{\calL}$ restricted to each $E_i$ is isomorphic to $\calO_{\bbP^1}(m_i+1)$ which is ample, and $\tilde{\calL}$ restricted to any other fiber over $t\neq 0$ is isomorphic to 
$$\omega_C^{\log} \coloneqq \omega_C\left(\sum_{i=1}^n p_i\right) \cong \calO_C\left(\sum_{i=1}^n (m_i+1)p_i\right)$$ 
which is also ample. 

In particular, this family $\tilde{\pi}$ parameterizes an {\em isotrivial} degeneration of holomorphic differentials of type $\mu$ on $C$ to a {\em multi-scale differential} $\tilde{\eta}$ on $\tilde{C}_0$ in the sense of \cite{BCGGM1, BCGGM3, CGHMS}, where $\tilde{\eta}$ has a pole of order $m_i+2$ at the nodal branch along each $E_i$. By using the explanation in~\cite[Section 2]{CC26}, we can contract $\tilde{C}_0$ through the following birational contraction 
\begin{align}
\label{eq:contraction}
f\colon \tilde{\calC} \longrightarrow \calC \coloneqq \Proj \left(\bigoplus_{k} \tilde{\pi}_{*}\tilde{\calL}^k\right)
\end{align}
where $f$ is an isomorphism away from $\tilde{C}_0$ and the images of $E_1, \ldots, E_n$ intersect at a singularity formed by the contraction image of $\tilde{C}_0$; see also \cite[Proposition 2.6]{S13} for a more general discussion of contractions. Additionally, $\tilde{\calL}$ descends to a (relatively) ample line bundle $\calL$ on $\calC$, where $f^{*}\calL \cong \tilde{\calL}$. 

Denote by $Y$ the central fiber curve of $\calC$ and $q\in Y$ the resulting singularity, where $Y$ consists of the components $Y_i \coloneqq f(E_i)$ for $i=1,\ldots, n$ which meet at the singularity $q$. We write 
$$s(C, p_1, \ldots, p_n) = (Y, q)$$ 
for the input and outcome through the above construction. We will see that smooth pointed curves in $\bbP \calH(\mu)$ that produce the same $(Y, q)$ possess a refined similarity. Therefore, we gather them together as follows.  

\begin{definition}
\label{def:Def}
Let $S(\mu)$ be the {\em set of isomorphism classes of $(Y, q)$} that arise from the above construction. Define the locus 
$$\bbP\calH(\mu)_{(Y, q)}\subset \bbP \calH(\mu),$$ 
where $(C, p_1, \ldots, p_n)\in \bbP\calH(\mu)_{(Y, q)}$ if $s(C, p_1, \ldots, p_n) \cong (Y, q)$. 
\end{definition}

In this way, $\bbP \calH(\mu)$ can be decomposed according to the 
isomorphism classes of the resulting singularities: 
$$ \bbP \calH(\mu) = \bigsqcup_{(Y,q)\in S(\mu)} \bbP\calH(\mu)_{(Y, q)}. $$

Later in Theorem~\ref{thm:filtration} (iv), we will see that $ \bbP \calH(\mu)_{(Y,q)}$ can be identified with the locus of negatively graded smooth deformations $\bbP {\rm Def}^{-}_s(Y, q)$ in the miniversal deformation space as introduced in~\eqref{eq:def_s}. 

\subsection{Properties of the construction}
\label{subsec:properties}

Next, we will describe the upshots of the preceding construction. To this end, we need to introduce the following filtration for the spaces of differentials: 
\begin{align}
\label{eq:filtration}
 \calF^{\lambda} H^0(C, m \omega_C^{\log})  \coloneqq \bigcap_{i=1}^n \left\{ s\in H^0 (C, m  \omega_C^{\log})  \mid  a_i \ord_{p_i} (s)  \geq \lambda \right\}.
\end{align}

\begin{theorem}
\label{thm:filtration}
In the above construction, the following statements hold: 
\begin{enumerate}
\item[\rm (i)] $Y$ is a smoothable curve with an isolated singularity at $q$ which is Gorenstein with $\bbG_m$-action, and $(\calC, \calL)$ is a test configuration of $(C, \omega_C^{\log})$. 
\item[\rm (ii)] There are $\bbG_m$-equivariant isomorphisms of graded rings: 
\begin{align}
\bigoplus_{m\in \bbN} H^0(\calC, m\calL) & \cong \bigoplus_{m\in \bbN} \bigoplus_{\lambda \in \bbZ}\calF^{\lambda} H^0(C, m \omega_C^{\log})t^{-\lambda},  \label{eq:iso-1} \\
\bigoplus_{m\in \bbN} H^0(Y, m \omega_Y^{\log}) & \cong \bigoplus_{m\in \bbN} \bigoplus_{\lambda \in \bbZ}\calF^{\lambda} H^0(C, m \omega_C^{\log}) / \calF^{\lambda + 1} H^0(C, m \omega_C^{\log}).  \label{eq:iso-2} 
\end{align}
In particular, if $\lambda < 0$, then $\calF^{\lambda} H^0(C, m \omega_C^{\log}) / \calF^{\lambda + 1} H^0(C, m \omega_C^{\log}) = 0$. 
\item[\rm (iii)] Consider the $\bbG_m$-action on $Y$ given by $\lambda\cdot t_i = \lambda^{-a_i} t_i$, where $t_i$ is the uniformizer of the normalization of each branch $Y_i$ of $Y$ at $q$. Let $\chi_m^{\log}$ and $\chi_m$ be the characters of this $\bbG_m$-action on $H^0(Y, m \omega_Y^{\log})$ and $H^0(Y, m \omega_Y)$, respectively. Then 
\begin{eqnarray}
\label{eq:log}
 \chi_1^{\log} (Y) = \chi_1 (Y) \quad {\rm and} \quad \chi_2^{\log} (Y) = \chi_2 (Y) + \sum_{i=1}^n a_i. 
 \end{eqnarray}
 \item[\rm (iv)] The space $\bbP\calH(\mu)_{(Y, q)}$ can be identified with the locus $\bbP{\rm Def}^{-}_s(Y, q)$ of smooth deformations with negative grading in the miniversal deformation space.  
 \item[\rm (v)] Let $y_i$ be the point at infinity of each rational branch $Y_i$ for $i = 1,\ldots, n$. Then the divisor at infinity for $Y$ defined in~\eqref{eq:infinity} is 
 \begin{align}
 \label{eq:Y-infinity}
 Y_{\infty} = \frac{1}{\ell} \cdot \sum_{i=1}^n (m_i+1) y_i \sim \frac{1}{\ell} \cdot \omega_{Y}^{\log}. 
 \end{align}
 Moreover, $\omega_{Y}^{\log}$ is an ample line bundle class. The same conclusion holds when replacing $(Y, y_1, \ldots, y_n)$ by the nearby fiber $(C, p_1, \ldots, p_n)$.  
\end{enumerate}
\end{theorem}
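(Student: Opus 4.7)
The plan is to establish (i)--(v) roughly in order, using (i) and (ii) as the geometric backbone from which the remaining parts follow, with (iv) presenting the main obstacle. For (i), I would first verify that $(\calC, \calL) \to \bbA^1$ is a well-defined flat family. Relative ampleness of $\tilde{\calL}$ away from $\tilde C_0$ is immediate from the degree computations following \eqref{eq:twisted}: the restriction to each $E_i$ is $\calO_{\bbP^1}(m_i+1)$ and to a generic fiber is $\omega_C^{\log}$, while the restriction to $\tilde C_0$ is trivial (which is precisely what the weights $a_i$ were chosen to arrange). The Proj construction therefore contracts exactly $\tilde C_0$, producing a unique isolated singular point $q \in Y$. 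The Gorenstein property follows by checking that $\omega_{\tilde\pi}$ descends along $f$ to a line bundle on $\calC$: since $\tilde{\calL}|_{\tilde C_0}$ is trivial, the relevant discrepancies along $\tilde C_0$ vanish, so the restriction $\omega_Y$ to the central fiber is invertible. Smoothability is immediate (the generic fiber is $C$), and the $\bbG_m$-action on $\bbA^1$ with weight $1$ on $t$ lifts equivariantly, inducing a good $\bbG_m$-action on $(Y, q)$.

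Part (ii) is the standard Rees-algebra description of a test configuration. A section of $\tilde\pi_*\tilde{\calL}^m$ expands formally as $\sum_\lambda s_\lambda t^{-\lambda}$, and its extendability across the weighted blowup at the ideal $(x_i, t^{a_i})$ forces $a_i \ord_{p_i}(s_\lambda) \geq \lambda$, which is exactly the condition $s_\lambda \in \calF^\lambda H^0(C, m\omega_C^{\log})$. The $\bbG_m$-gradings match because the action scales $t$ with weight $1$. Equation~\eqref{eq:iso-2} then follows by restricting to $t = 0$ to pass to the associated graded, and the vanishing for $\lambda < 0$ holds since sections of $m\omega_C^{\log}$ automatically have nonnegative orders at the $p_i$.

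Parts (iii) and (v) are essentially local at each smooth point $y_i$. With uniformizer $s_i = 1/t_i$, the $\bbG_m$-action satisfies $\lambda \cdot s_i = \lambda^{a_i} s_i$, so $(ds_i)^m/s_i^k$ carries weight $(m-k)a_i$. The sheaf quotient $m\omega_Y^{\log}/m\omega_Y$ is supported at the $y_i$'s with principal parts up to pole order $m$. For $m = 1$ this gives $n$ elements of weight $0$, cut by the residue theorem down to $n-1$, yielding $\chi_1^{\log} = \chi_1$. For $m = 2$ no residue relations survive (since $h^1(2\omega_Y) = 0$ when $g \geq 2$), and at each $y_i$ the two new basis elements have weights $0$ (order-$2$ pole) and $a_i$ (simple pole), summing to $\sum a_i$ and giving $\chi_2^{\log} - \chi_2 = \sum a_i$. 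For (v), the divisor $Y_\infty = \{t = 0\}$ meets each branch $Y_i$ at $y_i$ with multiplicity $1/a_i = (m_i+1)/\ell$, since $t$ has weight $1$ while $s_i$ has weight $a_i$; a branch-by-branch comparison using the Gorenstein adjunction $\deg(\omega_Y^{\log}|_{Y_i}) = m_i+1$ then yields $\ell Y_\infty \sim \omega_Y^{\log}$, and ampleness of $\omega_Y^{\log}$ descends from that of $\calL$.

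Part (iv) is the most delicate and will be the main obstacle. The plan is to invoke Pinkham--Looijenga universality from Section~\ref{subsec:def}: since $(Y,q)$ is Gorenstein with good $\bbG_m$-action by (i), the space ${\rm Def}^{-}(Y,q)$ is a final object in the category of $\bbG_m$-equivariant deformations of $(Y,q)$ per \cite[Appendix]{L84}. Our test configuration is such a deformation, so it arises from a unique equivariant classifying morphism $\bbA^1 \to {\rm Def}^{-}(Y,q)$; the image of $1 \in \bbA^1 \setminus \{0\}$ in the $\bbG_m$-quotient gives the point of $\bbP{\rm Def}^{-}_s(Y,q)$ attached to $(C, p_1, \ldots, p_n)$. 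Conversely, any smooth negatively-graded deformation carries a canonical divisor of type $\mu$ by restricting $\omega_Y^{\log}$ to the nearby smooth fiber via (v), producing the inverse map. The technical delicacies are verifying injectivity---two pointed curves producing the same deformation must be isomorphic by versality applied to a common nearby fiber---and reconciling Pinkham's sign convention, under which positively-graded coordinates on the base of the versal deformation correspond to negatively-graded tangent vectors in $T^{1,-}$, with our $\bbG_m$-action on $\bbA^1$.
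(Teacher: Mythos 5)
Your proposal is correct and follows the same backbone as the paper's proof: (i) via the $\bbG_m$-equivariance of the blowup ideal and descent of the relative one-form, (ii) via the Rees-algebra description of the test configuration (the paper makes this precise by citing \cite[Lemma 5.17]{BHJ17} and computing the valuations $v_{Y_i}(s) = a_i\ord_{p_i}(s)$, which is exactly your extendability condition), (iv) via Pinkham--Looijenga finality, and (v) via the local computation of the multiplicity $1/a_i$ of $t$ along each branch. Two localized differences are worth noting. For (iii), the paper simply cites \cite[Corollaries 3.3 and 3.6]{AFS16}, whereas you give a self-contained local computation of the weights of $(ds_i)^m/s_i^k$ in the quotient $H^0(m\omega_Y^{\log})/H^0(m\omega_Y)$; this is a valid and more transparent argument, though your parenthetical $h^1(2\omega_Y)=0$ requires $g\geq 2$, so the genus-one case (elliptic $n$-fold points, where $h^1(2\omega_Y)=1$) needs the extra observation that the one-dimensional cokernel $H^1(2\omega_Y)\cong H^0(\calO_Y)^{*}$ sits in weight $0$ and hence does not affect $\chi_2^{\log}-\chi_2$. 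For the converse direction of (iv), the paper runs stable reduction on a $\bbG_m$-equivariant smoothing and identifies the tail as a curve carrying the canonical divisor $\sum m_ip_i$ (the restriction of $\omega_Y$ to each branch being $dt_i/t_i^{m_i+2}$), whereas you lift the canonical divisor $\sum m_iy_i$ from the central fiber to nearby fibers; the latter is essentially the argument the paper uses for Theorem~\ref{thm:boundary}, and either route works, though in both your write-up and the paper's the final identification $s(C,p_1,\ldots,p_n)\cong(Y,q)$ ultimately rests on Looijenga's \cite[(A.5)]{L84} recovering the graded ring of $Y$ from the fiber polarized by its divisor at infinity.
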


\begin{proof} 
The preceding construction yields a commutative diagram 
$$
\xymatrix{
&  \tilde{\calC} \ar[ld]_{f} \ar[rd]^{\rho} &  \\
\calC \ar@{-->}[rr] &  & C_{\bbA^1}
}
$$
where $\rho$ is the weighted blowup, $f$ is the birational contraction, and the central fiber of $\calC$ is the singular curve $Y$ with an isolated singularity at $q$. 

For the first claim, the family $\calC$ is a smoothing of $Y$. Since the ideal of the blowup is $\bbG_m$-equivariant with respect to the standard $\bbG_m$-action on $\bbA^1$, it induces a natural $\bbG_m$-action on $\calC$ which satisfies the definition of test configurations in \cite[Section 2]{BHJ17}; see in particular \cite[Example 2.5]{BHJ17}. Additionally, the relative holomorphic one-form on $\tilde{\calC}$ with zeros $\sum_{i=1}^n m_i \tilde{P}_i$ descends to $\calC$ as a local generator for the relative dualizing bundle of $\calC$ at $q$, which implies that $q$ is a Gorenstein singularity; see \cite[Section 2]{CC26} for a more detailed explanation.   
 
For the second claim, the isomorphisms of graded rings follow from the Rees construction; see \cite[Section 2.5]{BHJ17} and \cite[Section 4.1]{ABBDILW23}. The description of each graded component follows from \cite[Lemma 5.17]{BHJ17}. More precisely, for each irreducible component $Y_i$ of the central fiber $Y$ in $\calC$, we have  
$$ b_{Y_i}\coloneqq \ord_{Y_i} (\calC)  = 1 \quad {\rm and} \quad v_{Y_i} \coloneqq b_{Y_i}^{-1} r (\ord_{Y_i}) = r (\ord_{Y_i}) $$
where $r(\ord_{Y_i})$ is the restriction of $\ord_{Y_i}$ from $K(\calC) \cong K(C)(t) $ to $K(C)$. Then for $s\in H^0(C, m \omega_{C}^{\log})$, the valuation $v_{Y_i} (s) = a_i \ord_{p_i}(s)$ due to the weighted blowup. Moreover, let $L_{\bbA^1}$ be the trivial family of line bundles $\omega_C^{\log}$ on $C_{\bbA^1}$. Note that  
$$\rho^{*} L_{\bbA^1} =  \omega_{\tilde{\pi}}\left(\sum_{i=1}^n \tilde{P}_i\right) \quad {\rm and} \quad f^{*} \calL = \omega_{\tilde{\pi}}\left(\sum_{i=1}^n \tilde{P}_i + \ell \tilde{C}_0\right).$$ It follows that  
$$ f^{*} \calL  = \rho^{*}  L_{\bbA^1} (\ell \tilde{C}_0)  $$
where $\ord_{Y_i} (\ell \tilde{C}_0) = 0$ since the proper transform $\tilde{C}_0$ does not contain $Y_i$.  
By the proof of \cite[Lemma 5.17]{BHJ17} and~\cite[Equation (4.3)]{ABBDILW23}, we conclude that the subspace with weight $\geq \lambda$ in $H^0(C, m \omega_C^{\log})$ is  
$$\calF^{\lambda} H^0(C, m \omega_C^{\log}) = \bigcap_{i=1}^n \left\{ s\in H^0 (C, m  \omega_C^{\log})  \mid  a_i \ord_{p_i} (s) \geq \lambda \right\} $$
as defined in~\eqref{eq:filtration}. 

For the third claim, note that our sign convention of the $\bbG_m$-action $\lambda \cdot t_i \mapsto \lambda^{-a_i} t_i$ coincides with that of \cite[Section 3.1.1]{AFS16}. Then the desired identities thus follow from \cite[Corollary 3.3 and Corollary 3.6]{AFS16}. In contrast, if we change the sign of the $\bbG_m$-action as in \cite[Section 4.1]{AFS16}, then we would also need to change the sign of the corresponding characters; see \cite[Theorem 4.2]{AFS16}. 

For the fourth claim, we refer to \cite{P74} about the properties of the miniversal deformation spaces for singularities with $\bbG_m$-action; see also \cite[Appendix]{L84}. In our case, recall that $\bbP {\rm Def}^{-}_s(Y, q)$ is the part of smooth deformations with negative grading. If $s(C, p_1, \ldots, p_n) = (Y, q)$, then by our construction, it is clear that $(C, p_1, \ldots, p_n) \in \bbP {\rm Def}^{-}_s(Y, q)$, where the $\bbG_m$-action is attractive to the singularity $q$ and hence the corresponding deformation direction has negative grading in Pinkham's convention. Conversely, consider a smoothable Gorenstein curve singularity $(Y, q)$ with $\bbG_m$-action such that $Y$ is a projective curve with $n$ irreducible rational components $Y_i$ that meet at $q$ and such that $(Y, q)$ admits a $\bbG_m$-equivariant smoothing. A local generator $\eta$ of $\omega_Y$ at $q$ restricted to each branch $Y_i$ is a meromorphic differential $\eta_i = d t_i / t_i^{m_i+2}$ for some $m_i \geq 0$. Then after performing stable reduction, the associated (smooth) tail of $(Y, q)$ (in the sense of \cite[Definition 3.1]{H00} and \cite[Section 4.4]{FS13}) is a curve $C$ with a canonical divisor $\sum_{i=1}^n m_i p_i$. This curve $C$ is attached at each $p_i$ to a rational component $E_i$ that maps to $Y_i$ in the process of stable reduction; see \cite[Section 1.4]{B24} for more details. In particular, $(C, p_1, \ldots, p_n) \in \bbP \calH(\mu)$ for $\mu = (m_1, \ldots, m_n)$. Hence, the identification between $\bbP\calH(\mu)_{(Y, q)}$ and $\bbP{\rm Def}^{-}_s (Y, q)$ is established. 

For the last claim, the divisor at infinity in~\eqref{eq:infinity} along each branch $Y_i$ is defined by $t$ and $s_i = 1/t_i$. Due to the weighted blowup of the ideal $(x_i, t^{a_i})$, where $a_i = \ell / (m_i+1)$, the restriction of $(t, s_i)$ to $Y_i$ picks up the point $y_i$ defined by $s_i = 0$ with coefficient $1/a_i = (m_i+1) / \ell$, and the last equivalence relation of divisor classes follows from $\sum_{i=1}^n (m_i +1) y_i \sim \omega_{Y}^{\log}$ in the construction of the contraction map $f$. Alternatively, we can compute the divisor at infinity by using~\cite[Section 5, Formula ($\ast$)]{P77}. Indeed, in the family $\tilde{\calC}$, we can further blow up each node of type $x_i t_i = t^{a_i}$ in the central fiber for $i = 1,\ldots, n$, replacing it by a chain of rational curves of length $a_i$. The resulting family thus provides a minimal good resolution of $\calC$ in the setting of \cite[Section 2]{P77}, where the related invariants can be read off easily. Finally, note that $\omega_Y^{\log}$ is the restriction of the (relatively) ample line bundle $\calL$ to $Y$, where $f^{*}\calL \cong \tilde{\calL}$ induces the contraction as in~\eqref{eq:contraction}. Therefore, $\omega_Y^{\log}$ is ample. The same conclusion holds for $C$ and its divisor at infinity in the negatively graded miniversal deformation space of $(Y,q)$; see~\cite[(A.5)]{L84}. 
\end{proof}

\subsection{The boundary of $\bbP{\rm Def}^{-} (Y, q)$}
\label{subsec:boundary}

As seen in Example~\ref{ex:cusp}, besides smooth elliptic curves in the miniversal deformation space of the ordinary cusp, the rational nodal curve also appears. In general, given a Gorenstein curve singularity $(Y, q)$ with $\bbG_m$-action, a natural question is to determine which singular curves can appear in $\bbP{\rm Def}^{-} (Y, q)$. This question has been studied in various special situations, mostly for unibranched monomial singularities with symmetric semigroups, for local complete intersections, and for low genus cases; see~\cite{S93, CS13, CF23, CM25, D90, LP19, B22}. Utilizing the ideas in Theorem~\ref{thm:filtration}, we can provide a unified description to answer this question. 

Analogous to the usual stability condition for nodal curves, we will use the following notion for Gorenstein curves.  Let $(C, p_1, \ldots, p_n)$ be a reduced Gorenstein curve with $n$ marked points $p_1, \ldots, p_n$ contained in the smooth locus of $C$. We say that $(C, p_1, \ldots, p_n)$ is a {\em stable pointed Gorenstein curve} if $\omega_C^{\log} = \omega_C(p_1+\cdots + p_n)$ is an ample line bundle class. Furthermore, given a partition $\mu = (m_1, \ldots, m_n)$ of $2g-2$, we say that the pointed curve $(C, p_1, \ldots, p_n)$ is {\em of type $\mu$} if $\sum_{i=1}^n m_i p_i$ is a canonical divisor of $C$. 

We remark that the stability condition imposed to pointed Gorenstein curves is standard; see~\cite[Footnote 3)]{AFS16}. It also naturally fits the ampleness requirement for the divisor at infinity in the setting of \cite[(A.5)]{L84}, since in our case the divisor at infinity is $\omega^{\log}/\ell$. 

\begin{theorem}
\label{thm:boundary}
Let $(Y, q)$ be a stable pointed Gorenstein curve of type $\mu$ with good $\bbG_m$-action, where $q$ is the unique singularity. Then deformations in $\bbP{\rm Def}^{-} (Y, q)$ parameterize stable pointed Gorenstein curves $(C, p_1, \ldots, p_n)$ of type $\mu$ such that the isomorphism of graded rings in~\eqref{eq:iso-2} holds. 
\end{theorem}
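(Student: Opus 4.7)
The plan is to establish a bijection between points of $\bbP{\rm Def}^{-}(Y,q)$ and stable pointed Gorenstein curves $(C, p_1,\ldots, p_n)$ of type $\mu$ satisfying~\eqref{eq:iso-2}. The framework rests on Looijenga's characterization of ${\rm Def}^{-}(Y,q)$ as the final object in the category of deformations of $(Y,q)$ with good $\bbG_m$-action, together with his standard projectivization: a flat family $\overline{\calC}\to {\rm Def}^{-}(Y,q)$ carrying a relatively ample divisor at infinity whose central fiber realizes $Y_{\infty}=\omega_Y^{\log}/\ell$ from~\eqref{eq:Y-infinity}.

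For the forward direction, I would start from a deformation $(C, p_1,\ldots, p_n)$ corresponding to a point in $\bbP{\rm Def}^{-}(Y,q)$. Because $Y$ is Gorenstein and the $\bbG_m$-equivariant versal total space $\overline{\calC}$ has everywhere invertible relative dualizing sheaf (by the standard deformation-theoretic fact that Gorenstein singularities with good $\bbG_m$-action admit Gorenstein-preserving versal deformations), every fiber is Gorenstein. The marked sections $P_i$ extend flatly across the family, and the generator of $\omega_{\overline{\calC}/{\rm Def}^{-}}$ provided by the test configuration restricts on each fiber to a differential whose zero divisor equals $\sum_i m_i P_i$, so every fiber is of type $\mu$. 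Restricting the relatively ample $Y_\infty$ to each fiber identifies it with $\omega_C^{\log}/\ell$, which yields stability.

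The isomorphism~\eqref{eq:iso-2} then follows from a fiberwise version of Theorem~\ref{thm:filtration}(ii): the Rees algebra attached to the filtration~\eqref{eq:filtration} on $\bigoplus_m H^0(C, m\omega_C^{\log})$ reconstructs the graded algebra $\bigoplus_m H^0(\overline{\calC}, m\calL)$ in a $\bbG_m$-equivariant neighborhood of the fiber, and passing to the associated graded (quotienting by $t$) recovers $\bigoplus_m H^0(Y, m\omega_Y^{\log})$. For the converse, given any stable pointed Gorenstein $(C, p_1,\ldots, p_n)$ of type $\mu$ satisfying~\eqref{eq:iso-2}, I would run the test configuration construction of Section~\ref{subsec:construction} unchanged: the weights $a_i$ are intrinsically determined by $\mu$, the weighted blowup followed by the contraction~\eqref{eq:contraction} produces a one-parameter $\bbG_m$-equivariant family degenerating $(C, p_1,\ldots, p_n)$ to $(Y,q)$ with attractive $\bbG_m$-action, and Looijenga's universal property then classifies this family by a map to $\bbP{\rm Def}^{-}(Y,q)$.

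The main obstacle will be handling singular deformations of $Y$ where $C$ itself acquires new singularities. One needs to verify that these new singularities lie away from the sections $P_i$ (so the weighted blowup at the $p_i$ is unaffected), that the Rees construction still produces the correct graded pieces on every fiber, and that the canonical generator continues to extend across these singularities without creating spurious zeros. A subsidiary concern is the precise matching of~\eqref{eq:iso-2} with membership in the \emph{negatively} graded part of the versal deformation: this follows from Pinkham's miniversality combined with the vanishing $\calF^{\lambda} H^0(C, m\omega_C^{\log})/\calF^{\lambda+1} H^0(C, m\omega_C^{\log})=0$ for $\lambda<0$ established in Theorem~\ref{thm:filtration}(ii), which rules out any positively graded deformation directions and thus pins the classifying map down to $\bbP{\rm Def}^{-}(Y,q)$ rather than the full deformation space.
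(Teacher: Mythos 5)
Your overall architecture (two directions plus the graded-ring identification via Looijenga/Rees) matches the paper's, but your forward direction contains an unjustified assertion that is essentially the statement to be proved. You claim that ``the marked sections $P_i$ extend flatly across the family'' and that a generator of the relative dualizing sheaf ``restricts on each fiber to a differential whose zero divisor equals $\sum_i m_i P_i$.'' Over the versal base there are no a priori marked sections, and the existence on each fiber of a canonical divisor supported at $n$ distinct \emph{smooth} points with multiplicities $m_1,\ldots,m_n$ is exactly the content of the theorem. The paper instead argues fiberwise along the $\bbG_m$-orbit closure: since every fiber is Gorenstein (openness of the Gorenstein condition), the Hodge bundle has constant rank over the isotrivial degeneration $C_t\rightsquigarrow Y$, so the canonical divisor $\sum_i m_i y_i$ on the central fiber lifts to canonical divisors $D_t$ on the nearby fibers; because $C_t\cong C$ for all $t\neq 0$, the $D_t$ form a family of divisors on the \emph{fixed} curve $C$ whose supports converge to $n$ distinct smooth points with multiplicities $m_i$, so the limit $D$ is of type $\mu$. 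Without some such limiting argument your forward direction begs the question.

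In the converse direction you correctly flag the obstacles (new singularities of $C$ away from the $p_i$, validity of the contraction, and the Gorenstein property of the resulting singular point), but you do not resolve them, and they are not automatic. The paper handles them concretely: the weighted blowup is unaffected because each $p_i$ is a smooth point; the contraction \eqref{eq:contraction} still exists because ampleness of $\omega_C^{\log}$ makes $\tilde{\calL}$ relatively semiample and the base is a DVR, so $\tilde{\pi}_*\tilde{\calL}^k$ is locally free for $k\gg 0$; and the contracted singularity is Gorenstein because the Residue Theorem holds for reduced singular curves, so the multi-scale differential descends to a local generator of the dualizing sheaf at $q$ exactly as in the smooth case. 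Your treatment of \eqref{eq:iso-2} and of why the classifying map lands in the \emph{negatively} graded part is consistent with the paper (which simply invokes Looijenga's description of the divisor at infinity $\omega^{\log}/\ell$), but the two gaps above would need to be filled for the proof to stand.
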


As mentioned before, Looijenga \cite[(A.6)]{L84} revealed the functorial meaning of $\bbP{\rm Def}^{-}(Y, q)$ as the fine moduli space of deformations with good $\bbG_m$-action that are polarized by the divisor at infinity. Theorem~\ref{thm:boundary} thus provides an explicit geometric description for the deformations that can appear in this moduli space.  
\begin{proof}
First, suppose $C = C_t$ is a (possibly singular) curve that admits an isotrivial degeneration to $(Y, q)$ with respect to the $\bbG_m$-action, where the base parameter is $t$. Since being Gorenstein is an open condition in the moduli stack of curves, it follows that $C$ is Gorenstein, and hence the Hodge bundle of rank $g$ is well-defined over this family. Let $y_i$ be the point at infinity in each branch $Y_i$ of $Y$, where $\sum_{i=1}^n m_i y_i$ is a canonical divisor of $Y$ by assumption. Since the Hodge bundle parameterizing canonical divisors (up to a scaler multiple) has constant rank on each fiber curve, we can lift $\sum_{i=1}^n m_i y_i$ to a canonical divisor $D_t$ in the nearby fiber $C_t$ for $t\neq 0$. Since $C_t \cong C$, we can also view $\{D_t\}$ as a family of canonical divisors in $C$. As $t\to 0$, the points in the support of $D_t$ converge to $n$ distinct limit points, each with multiplicity $m_i$. Therefore, their limit $D$ as a canonical divisor in $C$ is of type $\mu$. Since each $y_i$ is a smooth point of $Y$, the support of $D$ lies in the smooth locus of $C$ as well. 

Conversely, suppose $(C, p_1, \ldots, p_n)$ is a stable pointed Gorenstein curve with a differential $\omega$ as a section of the dualizing bundle of $C$, where the zeros of $\omega$ yield the canonical divisor $\sum_{i=1}^n m_i p_i$ of type $\mu$. Since each $p_i$ is a smooth point of $C$, the construction of the weighted blowup in Section~\ref{subsec:construction} can be carried out in the same way.  Additionally, $\omega_C^{\log}$ is ample by assumption, which implies that the twisted relative log dualizing bundle $\tilde{\calL}$ can be defined as in~\eqref{eq:twisted} and remains relatively semiample. Indeed, let $\tilde{\eta}$ be the multi-scale differential associated to the limit of $\omega$ in the central fiber of the weighted blowup. Then $\tilde{\eta}$ as a relative section of $\tilde{\calL}$ restricted to the central fiber has zeros only at the proper transforms $y_i$ of $p_i$. On the other hand, we can take a general section of the ample line bundle  $\omega_C^{\log}$ such that it does not vanish at any $p_i$, and hence as a relative section of $\tilde{\calL}$ it does not vanish at any $y_i$. Since $\tilde{\calL}$ is a subbundle of $\omega_{\tilde{\pi}}^{\log}\otimes \calO_{\tilde{\calC}}(\ell(\tilde{\calC}_0 + \sum_{i=1}^n E_i))$ and the base of the family is a DVR, it follows that $\tilde{\pi}_{*}\tilde{\calL}^{k}$ is locally free for $k\gg 0$. Therefore, the contraction in~\eqref{eq:contraction} holds. 

We can also justify that the resulting singularity $q$ in the central fiber $Y$ after the contraction is Gorenstein. Indeed, the Residue Theorem holds for reduced singular curves as well; see~\cite[Lemma 1.1.2]{LP19}. Additionally, by assumption $\tilde{\calC}$ is Gorenstein in codimension $1$, and it also satisfies Serre's condition $S_2$ since the base is smooth and the fibers are reduced curves. It follows that the dualizing sheaf of $\tilde{\calC}$ can be identified with the sheaf associated to a canonical divisor; see~\cite[Section 5]{K13}.  Consequently, the same argument as in~\cite[Section 2]{CC26} implies that the multi-scale differential $\tilde{\eta}$ as the limit of $\omega$ can descend to $Y$ and locally generate its dualizing sheaf at the singularity $q$, since the zeros of $\tilde{\eta}$ are along the divisor at infinity which are disjoint from $q$. The construction of the blowup and contraction as in Section~\ref{subsec:construction} is $\bbG_m$-equivariant. Therefore, $(Y, q)$ is a Gorenstein curve with $\bbG_m$-action of type $\mu$, and its stability follows from Theorem~\ref{thm:filtration} (v). 

Finally, the isomorphism of graded rings as in~\eqref{eq:iso-2} follows from \cite[(A.5)]{L84}, where the class of the divisor at infinity is just $\omega^{\log}/\ell$. Indeed, the right-hand side of \eqref{eq:filtration} can be rewritten as 
\begin{align*}
    &\ \left\{ s\in H^0 (C, m  \omega_C^{\log})  \mathrel{\Big|}   (s)_0 \geq \sum_{i=1}^n \frac{\lambda}{a_i}p_i\right\} \\
   = &\ \left\{ s\in H^0 \left(C, m  \omega_C^{\log}-\sum_{i=1}^n\frac{\lambda}{a_i}p_i\right)  \right\} \\
   = &\ H^0\left(C, (m\ell - \lambda) C_{\infty}\right), 
\end{align*}
where $C_{\infty} = \sum_{i=1}^n (m_i+1)p_i / \ell = \sum_{i=1}^n p_i / a_i $ is the divisor at infinity for $C$. 
\end{proof}

As an example, consider the special case when $(Y,q)$ is a monomial singularity whose gap sequence contains $2g-1$; see Section~\ref{subsec:monomial} below for the related definitions. The following description of the boundary of $\bbP{\rm Def}^{-}(Y, q)$ was first established by St\"ohr \cite{S93} through a delicate study of Gr\"obner bases and syzygies for canonically embedded Gorenstein curves. Now using Theorem~\ref{thm:boundary}, we can give a concise and conceptual explanation as follows. 

\begin{corollary}
\label{cor:boundary}
Let $(Y,q)$ be a monomial curve singularity of genus $g$ whose gap sequence $G$ contains $2g-1$. Then the boundary of $\bbP{\rm Def}^{-}(Y, q)$ parameterizes stable pointed integral Gorenstein curves 
$(C, p)$ where the Weierstrass gap sequence of $p$ in $C$ is equal to $G$. 
\end{corollary}

\begin{proof}
In the case of a monomial singularity, the graded ring structure in \eqref{eq:iso-2} is determined by the corresponding semigroup that defines the singularity, whose complement is thus the gap sequence $G$; see Section~\ref{subsec:monomial} for more details. 
Note that $2g-1\in G$ is equivalent to that $(2g-2)p$ is a canonical divisor, which corresponds to $\mu = (2g-2)$ in our notation. Therefore, the desired claim follows as a special case of Theorem~\ref{thm:boundary}. Finally, the irreducibility of $C$ is due to the ampleness of 
$\omega_C(p)$; otherwise if $C$ is reducible, then at least one of its irreducible components does not contain $p$, and hence $\omega_C(p)\sim (2g-1)p $ would be trivial restricted to that component, which contradicts that $\omega_C(p)$ is ample on $C$. 
\end{proof}

\section{Examples of singularities and their invariants}
\label{sec:ex}

In this section, we describe various singularities that arise from the preceding construction. In particular, we compute the weights and characters for the associated $\bbG_m$-action on the spaces of differentials. 

Although most examples in this section have been considered in \cite{AFS16}, we highlight our uniform approach through filtrations of the Hodge bundle associated to the test configurations, which will lead to new applications in later sections. As mentioned in Theorem~\ref{thm:filtration} (ii), it suffices to consider $\lambda \geq 0$ for nontrivial contributions of the weights to the characters. We also recall the sign convention of the $\bbG_m$-action on $(Y,q)$ in Theorem~\ref{thm:filtration} (iii), for which we adapt the setting as in \cite[Section 3.1.1]{AFS16}. 

\subsection{Monomial singularities and $\bbP \calH(2g-2)$}
\label{subsec:monomial}

For $\mu = (2g-2)$, we have 
$$(2g-2)p \sim \omega_C, \quad \ell = 2g-1, \quad a = 1.$$
Hence, for $\lambda \geq 0$, we have 
$$ \calF^{\lambda} H^0(C, m \omega_C^{\log}) = H^0 (C, (m(2g-1) - \lambda) p), $$
which is determined by the Weierstrass gap sequence and semigroup of $p$. 

Suppose the {\em Weierstrass gap sequence} $G_p$ consists of 
$$1 = b_1 < b_2 < \cdots < b_g = 2g-1,$$ 
where $h^0(C, (k-1)p) = h^0(C, kp)$ holds for $k \in \bbN$ if and only if $k = b_i$ for some $i$. Additionally, since $2g-1 \in G_p$,  such gap sequences are called {\em symmetric}, namely, $2g-1 - k \in G_p $ if and only if $k$ 
is in the {\em Weierstrass semigroup} $H_p \coloneqq \bbN \setminus G_p $. In this case, $Y$ is the $\bbG_m$-equivariant compactification of the {\em monomial curve} 
$$\Spec \bbC [t^{k}: k\in H_p]$$
 and $q$ is the unibranch singularity at $t = 0$. In particular, $\bbP{\rm Def}^{-}_s (Y, q)$ is the locus of $(C, p)\in \bbP \calH(2g-2)$ where the semigroup of $p$ is generated by the exponents of the monomials defining $(Y, q)$; see \cite[Section 13]{P74}.  

Next, we compute the associated weights and characters. For $m = 1$, we have 
$$\dim  H^0(C, (2g-1 - \lambda)p) / H^0(C, (2g- 2 - \lambda)p) = 1\ \mbox{or}\ 0, $$
where the former occurs if and only if $2g-1 - \lambda \in H_p$, namely, $\lambda = b_i$ for some $i$.  
We thus obtain that 
$$  \chi_1^{\log} (Y)  = \chi_1(Y) = \sum_{i=1}^g b_i. $$ 

For $m = 2$, we have 
$$\dim H^0(C, (4g-2 - \lambda)p) / H^0(C, (4g-3 - \lambda)p) = 1\  \mbox{or}\ 0,$$ 
where the former occurs if and only if 
$4g-2 - \lambda \in H_p$, namely, $\lambda = 2g-1 + b_i$ for some $i$ or $\lambda \leq 2g-2$. 
 We thus obtain that 
$$ \chi_2^{\log}(Y) = (2g-1)^2 + \sum_{i=1}^g b_i \quad {\rm and}\quad  \chi_2 (Y) = (2g-1)^2 + \sum_{i=1}^g b_i - 1 $$
where the second identity follows from \eqref{eq:log}. 

These values are consistent with those in \cite[Section 3.1.7]{AFS16}. 

\subsubsection{$A_{2g}$-singularity and $\bbP \calH(2g-2)^{\hyp}$}
\label{subsec:A2g}
The $A_{2g}$-singularity is given by 
$$\Spec \bbC[x,y] / (y^2 - x^{2g+1}).$$ 
The corresponding gap sequence is $G = \{1, 3, \ldots, 2g-3, 2g-1\}$. In this case, the locus $\bbP{\rm Def}^{-}_s (A_{2g})$ is the {\em hyperelliptic} component $\bbP \calH(2g-2)^{\hyp}$ parameterizing $(C, p)$ where $C$ is hyperelliptic and $p$ is a Weierstrass point; see \cite[Section 3.1.2]{AFS16}. 

\subsubsection{$E_6$-singularity and $\bbP \calH(4)^{\odd}$}
\label{subsec:E6}
The $E_6$-singularity is given by 
$$\Spec \bbC [x, y] / (y^3 - x^4).$$ 
The corresponding semigroup and gap sequence are 
$H = \{ 3, 4, 6, 7, \ldots \}$ and $G = \{1, 2, 5 \}$.  
In this case, the locus $\bbP{\rm Def}^{-}_s (E_6)$ is the {\em odd spin} component $\bbP \calH(4)^{\odd}$ parameterizing $(C, p)$ where $C$ is a nonhyperelliptic curve of genus three with a hyperflex at $p$, i.e., $4p \sim \omega_C$ and $h^0(C, 2p) = 1$. By the preceding computations, we have 
$$ \chi_1^{\log} (E_6) = 8, \quad \chi_2^{\log} (E_6) = 33. $$ 
These values are consistent with those in \cite[Section 3.1.6]{AFS16}. 

\subsubsection{$E_8$-singularity and $\bbP \calH(6)^{\even}$}
\label{subsec:E8}
Similarly, the $E_8$-singularity is given by 
$$\Spec \bbC [x, y] / (y^3 - x^5).$$ 
The corresponding semigroup and gap sequence are 
$H = \{ 3, 5, 6, 8, 9, \ldots \}$ and $G = \{1, 2, 4, 7 \}$. 
The locus $\bbP{\rm Def}^{-}_s (E_8)$ is the {\em even spin} component $\bbP \calH(6)^{\even}$ parameterizing $(C, p)$ where $C$ is a nonhyperelliptic curve of genus four with an even theta characteristic given by $\calO(3p)$. By the preceding computations, we have 
$$ \chi_1^{\log} (E_8) = 14, \quad \chi_2^{\log} (E_8) = 63. $$
These values are consistent with those in \cite[Section 3.1.6]{AFS16}. 

\subsubsection{Unibranched planar singularities}
\label{subsec:unibranch}
In general, consider unibranched planar singularities of type 
$$\Spec \bbC [t^{pi + qj : i, j \in \bbN}],$$ 
where $p$ and $q$ are coprime. Its $\bbP{\rm Def}^{-}_s$ corresponds to the locus in the stratum $\bbP \calH(2g-2)$, where $g = (p-1)(q-1)/2$, with the gap sequence 
$G = \{ b_1, \ldots, b_g\}$ consisting of positive integers that cannot be expressed as $p i + q j $ with $i, j \geq 0$. In this case, 
$$\sum_{i=1}^g b_i  = \frac{1}{12} (p-1)(q-1) (2pq - p - q - 1); $$
see \cite[Section 3.1.8]{AFS16} for more details and related references.  It follows that 
\begin{align*}
\chi_1^{\log} (Y) & = \chi_1(Y) = \frac{1}{12} (p-1)(q-1) (2pq - p - q - 1), \\
\chi_2^{\log} (Y) & = (pq - p - q)^2 + \frac{1}{12} (p-1)(q-1) (2pq - p - q - 1). 
\end{align*}

\subsection{$A_{2g+1}$-singularity and $\bbP \calH(g-1, g-1)^{\hyp}$}
\label{subsec:A2g+1}

This singularity is given by 
$$\Spec \bbC[x, y] / (y^2 - x^{2g+2}),$$ 
which has two branches.  It corresponds to the {\em hyperelliptic} 
component $\bbP \calH(g-1, g-1)^{\hyp}$ parameterizing $(C, p_1, p_2)$, where $C$ is hyperelliptic and $p_1, p_2$ are hyperelliptic conjugate points. 

In this case, we have 
 $$\ell = g, \quad a_1 = a_2 = 1, \quad \omega_C^{\log} \sim g (p_1 + p_2)$$ 
and 
$$\calF^{\lambda} H^0(C, m\omega_C^{\log}) = H^0(C, (mg - \lambda) (p_1 + p_2)).$$  

For $m = 1$, we have 
$$\dim  H^0(C, (g - \lambda) (p_1 + p_2)) / H^0(C, (g - \lambda - 1) (p_1 + p_2)) = 1\  \mbox{or} \ 0,$$ 
where the former occurs if and only if $\lambda \leq g$. We thus obtain that 
$$ \chi_1^{\log} (Y)  = \chi_1(Y) = \sum_{\lambda = 1}^{g} \lambda  = \frac{g(g+1)}{2}. $$

For $m = 2$, $\dim  H^0(C, (2g - \lambda) (p_1 + p_2)) / H^0(C, (2g - \lambda - 1) (p_1 + p_2)) = 2$, $1$, or $0$, which occurs if $\lambda \leq g-1$, $g\leq \lambda \leq 2g$, or $\lambda > 2g$, respectively. We thus obtain that 
\begin{align*}
 \chi_2^{\log} (Y) & = 2 \left(\sum_{\lambda = 1}^{g-1} \lambda\right) + \sum_{\lambda = g}^{2g} \lambda = \frac{5g^2 + g}{2}, \\
\chi_2 (Y) & = \chi_2^{\log} (Y) - (1+1) = \frac{5g^2 + g}{2}  - 2.  
\end{align*}

These values are consistent with those in \cite[Section 3.1.3]{AFS16}. 

\subsection{$D_{2g+1}$-singularity and $\bbP \calH(0,  2g-2)^{\hyp}$}
\label{subsec:D2g+1}

This singularity is given by 
$$\Spec \bbC[x, y] / x(y^2 - x^{2g-1}).$$ 
It corresponds to the stratum $\bbP \calH(0,  2g-2)^{\hyp}$ parameterizing $(C, p_1, p_2)$, where $C$ is hyperelliptic, $p_1$ is an ordinary marked point, and $p_2$ is a Weierstrass point. 

In this case, we have 
$$\ell = 2g-1, \quad a_1 = 2g-1, \quad a_2 = 1.$$ 
Since 
 $\omega_C^{\log} \sim p_1 + (2g-1)p_2$, we have 
$$\calF^{\lambda} H^0(C, m\omega_C^{\log}) = H^0\left(m \omega_C^{\log} - \left\lceil \frac{\lambda}{2g-1}\right\rceil p_1 - \lambda p_2\right). $$

For $m = 1$, if $\lambda \geq 2g$, then $\calF^{\lambda} H^0(C, \omega_C^{\log})  = 0$. Suppose $1\leq \lambda \leq 2g-1$. Then $\calF^{\lambda} H^0(C, \omega_C^{\log}) = H^0(C, (2g-1 - \lambda) p_2)$. Consequently, 
$\dim H^0(C, (2g-1 - \lambda) p_2) / H^0 (C, (2g-2 - \lambda) p_2) = 1$ or $0$, where the former occurs if and only if 
$\lambda = 1, 3, \ldots, 2g-1$. We thus obtain that 
$$  \chi_1^{\log} (Y)  = \chi_1(Y) = 1 + 3 + \cdots + (2g-1)  = g^2. $$

For $m = 2$, if $\lambda \geq 4g-1$, then $\calF^{\lambda} H^0(C, 2\omega_C^{\log})  = 0$. Next, suppose 
$2g \leq \lambda \leq 4g-2$. Then $\calF^{\lambda} H^0(C, 2\omega_C^{\log}) = H^0(C, (4g-2 - \lambda) p_2)$. Consequently,  $\dim H^0(C, (4g-2 - \lambda) p_2) / H^0(C, (4g- 3 - \lambda) p_2) = 1$ for $\lambda = 2g, 2g+2, \ldots, 4g-2$. Finally, suppose $1 \leq \lambda \leq 2g-1$. Then 
$\calF^{\lambda} H^0(C, 2\omega_C^{\log}) = H^0(C, p_1 + (4g-2 - \lambda) p_2)$ whose dimension is $3g-\lambda$. Consequently,  $\dim H^0(C, p_1 + (4g-2 - \lambda) p_2) / H^0(C, p_1 + (4g- 3 - \lambda) p_2) = 1$ for all $\lambda$ in this range. We thus obtain that 
\begin{align*}
 \chi_2^{\log} (Y) & = 2 \sum_{i=g}^{2g-1} i + \sum_{j=1}^{2g-1} j = 5g^2 - 2g, \\
\chi_2 (Y) & = \chi_2^{\log} (Y) - (2g-1 + 1) = 5g^2 - 4g. 
\end{align*}

These values are consistent with those in \cite[Section 3.1.4]{AFS16}. 

\subsection{$D_{2g+2}$-singularity and $\bbP \calH(0,g-1,g-1)^{\hyp}$}
\label{subsec:D2g+2}

This singularity is given by 
$$\Spec \bbC[x, y] / x(y^2 - x^{2g}).$$ 
It corresponds to the stratum $\bbP \calH(0,g-1,g-1)^{\hyp}$ parameterizing $(C, p_1, p_2, p_3)$, where $C$ is hyperelliptic, $p_1$ is an ordinary marked point, and $p_2, p_3$ are hyperelliptic conjugate points. 

In this case, 
$$\ell = g, \quad a_1 = g, \quad a_2 =  a_3 = 1.$$ 
Since 
 $\omega_C^{\log} \sim p_1 + g (p_2+p_3)$, we have 
$$\calF^{\lambda} H^0(C, m\omega_C^{\log}) =  H^0\left(m \omega_C^{\log} - \left\lceil \frac{\lambda}{g}\right\rceil p_1 - \lambda (p_2+p_3)\right). $$ 

For $m = 1$, if $\lambda \geq g+1$, then $\calF^{\lambda} H^0(C, \omega_C^{\log}) = 0$. Next, suppose $1\leq \lambda \leq g$. Then 
$\calF^{\lambda} H^0(C, \omega_C^{\log}) = H^0(C, (g- \lambda) (p_1 + p_2))$ which is of dimension $g-\lambda + 1$.  
Consequently, $\dim H^0(C, (g- \lambda) (p_1 + p_2)) / H^0(C, (g- \lambda - 1) (p_1 + p_2)) = 1$ for all $\lambda$ in this range. We thus obtain that 
$$  \chi_1^{\log} (Y)  = \chi_1(Y) = \sum_{\lambda = 1}^g \lambda = \frac{g^2 + g}{2}. $$

For $m = 2$, if $\lambda \geq 2g+1$, then $\calF^{\lambda} H^0(C, 2\omega_C^{\log}) = 0$. Next, we suppose 
$g+1 \leq \lambda \leq 2g$. Then $\calF^{\lambda} H^0(C, 2\omega_C^{\log}) = H^0(C, (2g-\lambda)(p_2 + p_3))$ 
which is of dimension $2g-\lambda + 1$. Consequently, $\dim \calF^{\lambda} H^0(C, 2\omega_C^{\log}) / \calF^{\lambda+1} H^0(C, 2\omega_C^{\log}) = 1$ for all $\lambda$ in this range. Finally, consider $1\leq \lambda \leq g$. Then 
$\calF^{\lambda} H^0(C, 2\omega_C^{\log}) = H^0(C, p_1 + (2g-\lambda)(p_2 + p_3))$ which is of dimension 
$3g - 2\lambda + 2$. Consequently, $\dim \calF^{\lambda} H^0(C, 2\omega_C^{\log}) / \calF^{\lambda+1} H^0(C, 2\omega_C^{\log}) = 2$ for all $\lambda$ in this range. We thus obtain that 
\begin{align*}
 \chi_2^{\log} (Y) & =  \sum_{\lambda = g+1}^{2g} \lambda + 2 \sum_{\lambda = 1}^g \lambda = \frac{5g^2 + 3g }{2}, \\
 \chi_2 (Y) & = \chi_2^{\log} (Y) - (g + 1 + 1) = \frac{5g^2 + g - 4}{2}.   
\end{align*}

These values are consistent with those in \cite[Section 3.1.5]{AFS16}. 

\subsection{$E_7$-singularity and $\bbP \calH(1,3)$}
\label{subsec:E7}

This singularity is given by 
$$\Spec \bbC[x, y] / y (y^2 - x^{3}).$$ 
It corresponds to the stratum $\bbP \calH(1,3)$ where 
$p_1 + 3p_2 \sim \omega_C$. 

In this case, we have 
$$\ell = 4, \quad a_1 = 2, \quad a_2 = 1.$$ 
It follows that 
$$\calF^{\lambda} H^0(C, m\omega_C^{\log}) = H^0\left( \left(2m -  \left\lceil \frac{\lambda}{2}\right\rceil p_1\right) + (4m - \lambda) p_2\right). $$

For $m = 1$, we have the filtration of $H^0(C, \omega_C^{\log}) = H^0(C, 2p_1 + 4p_2)$ given by 
\begin{align*} 
 & \calF^{0}\colon H^0(C, 2p_1 + 4p_2) \cong \bbC^4 \supsetneq \calF^{1}\colon H^0(C, p_1 + 3p_2) \cong \bbC^3 \supsetneq  \calF^{2}\colon H^0(C, p_1 + 2p_2) \\
 & \cong \bbC^2  \supsetneq \calF^{3}\colon H^0(C, p_2) = \calF^{4}\colon H^0(C, \calO) \cong \bbC \supsetneq  \calF^{5}  = 0. 
\end{align*}
The nontrivial weights are $1$, $2$, and $4$. We thus obtain that  
$$ \chi_1^{\log}(E_7) = \chi_1 (E_7) = 7. $$

For $m = 2$,  the filtration of $H^0(C, 2\omega_C^{\log}) = H^0(C, 4p_1 + 8p_2)$ is given by 
\begin{align*} & \calF^{0} \colon H^0(C, 4p_1 + 8p_2) \cong \bbC^{10} \supsetneq \calF^{1} \colon H^0(C, 3p_1 + 7p_2) \cong \bbC^8 \supsetneq  \calF^{2}\colon H^0(C, 3p_1 + 6p_2) \cong \bbC^7 \\
& \supsetneq \calF^{3} \colon H^0(C, 2p_1 + 5p_2) \cong \bbC^5 \supsetneq  \calF^{4} \colon H^0(C, 2p_1 + 4p_2) \cong \bbC^4  \supsetneq  \calF^{5} \colon H^0(C, p_1 + 3p_2) \cong \bbC^3   \\ 
& \supsetneq \calF^{6} \colon H^0(C, p_1 + 2p_2) \cong \bbC^2 \supsetneq  \calF^{7} \colon H^0(C, p_2) =\calF^{8} \colon H^0(C, \calO) \cong \bbC \supsetneq \calF^9 = 0. 
\end{align*}
The nontrivial weights are $1$, $2$, $2$, $3$, $4$, $5$, $6$, and $8$. We thus obtain that  
$$ \chi_2^{\log}(E_7) = 31, \quad \chi_2 (E_7) = 31 - (2+1) = 28. $$

These values are consistent with those in \cite[Section 3.1.6]{AFS16}. 

\subsection{Elliptic $n$-fold points and $\bbP \calH(0^{n})$}
\label{subsec:elliptic}

For $n\geq 3$, this singularity is given by the union of $n$ general lines through a point in $\bbA^{n-1}$.  The corresponding stratum is $\bbP \calH(0^{n})$ in genus one, where $0^n$ denotes $n$ ordinary marked points, i.e., zeros of order $0$.  

In this case, we have $\ell = 1$ and $a_i = 1$ for all $i$. It follows that 
$$\calF^{\lambda} H^0(C, m\omega_C^{\log}) = H^0(C, (m- \lambda) (p_1 + \cdots + p_n))$$ 
which is of dimension $0$, $1$, or $(m-\lambda)n$ for $\lambda > m$, $\lambda = m$, or $\lambda < m$, respectively. 

For $m = 1$, we have  
$$\dim \calF^{\lambda} H^0(C, \omega_C^{\log}) / \calF^{\lambda+1} H^0(C, \omega_C^{\log}) = 1$$ 
for $\lambda = 1$. We thus obtain that 
$$ \chi_1^{\log}(Y) = \chi_1(Y) = 1. $$

For $m = 2$, we have 
$$\dim \calF^{\lambda} H^0(C, 2\omega_C^{\log}) / \calF^{\lambda+1} H^0(C, 2\omega_C^{\log}) = n-1 \ \mbox{or}\ 1$$ 
for $\lambda = 1$ or $2$, respectively. We thus obtain that 
\begin{align*}
   \chi_2^{\log}(Y) & = n+1, \\
\chi_2(Y) & = (n+1) - n = 1. 
\end{align*}

The obtained values are consistent with those in \cite[Section 3.1.9]{AFS16}. 

\subsection{The principal strata}
\label{subsec:principal}

Consider $(C, p_1, \ldots, p_{2g-2})\in \bbP \calH (1^{2g-2})$. In this case, $\ell = 2$ and $a_i = 1$ for $i =1, \ldots, 2g-2$. Therefore, we have 
$$\calF^{\lambda} H^0(C, m\omega_C^{\log}) = H^0(C, (2m - \lambda) (p_1 + \cdots + p_{2g-2})) = H^0 (C, (2m-\lambda)K).$$ 
Note that $h^0(C, (2m-\lambda)K) = 0$, $1$, $g$, or $(4m-2\lambda - 1) (2g-2)$ 
 if $\lambda \geq 2m+1$, $\lambda = 2m$, $\lambda = 2m-1$, or $\lambda \leq 2m-2$, respectively. 
 
 For $m = 1$, we have 
 $\dim \calF^{\lambda} H^0(C, \omega_C^{\log}) / \calF^{\lambda+1} H^0(C, \omega_C^{\log}) = g-1$ or $1$ for $\lambda = 1$ or $2$, respectively. We thus obtain that 
 $$ \chi_1^{\log} (Y) =  \chi_1 (Y) = g+1. $$
 
 For $m = 2$, we have 
 $\dim \calF^{\lambda} H^0(C, 2\omega_C^{\log}) / \calF^{\lambda+1} H^0(C, 2\omega_C^{\log}) = 2g-2$, $2g-3$, $g-1$, or $1$ for $\lambda = 1$, $2$, $3$, or $4$, respectively. We thus obtain that 
 \begin{align*}
   \chi_2^{\log} (Y) & = (2g-2)\cdot 1 + (2g-3)\cdot 2 + (g-1)\cdot 3 + 1 \cdot 4 = 9g - 7, \\
  \chi_2 (Y) & = \chi_2^{\log} (Y) - (2g-2) = 7g -5. 
  \end{align*}
  
  \begin{remark}
  \label{rem:cross-ratio}
  Although all $(Y,q)\in S(1^{2g-2})$ behave the same numerically, $S(1^{2g-2})$ can contain infinitely many non-isomorphic singularity classes. For example, for $g = 3$, consider the canonical divisor $p_1 + p_2 + p_3 + p_4$ cut out by a line $L$ with a plane quartic curve $C$. Through the construction of the deformation to the normal cone for the divisor at infinity $p_1 + p_2 + p_3 + p_4$, the resulting $Y$ consists of four lines joining each $p_i$ to a common point $q\in \bbP^2$. Hence, the {\em cross-ratio} of $p_1$, $p_2$, $p_3$, and $p_4$ in $L\cong \bbP^1$ is an invariant for the singularity $(Y, q)$; see \cite[Remark 2.1]{B24} for more details. In this case, $\bbP\calH(1,1,1,1)$ admits a slicing over the one-dimensional base of cross-ratios. We will describe the corresponding singularities and their deformations in detail in Section~\ref{subsec:(1,1,1,1)}.  
\end{remark}

\subsection{Toric singularities}
\label{subsec:toric}

Consider the singularity given by 
$$\Spec \bbC [x, y] / (x^p - y^q),$$ 
where $p, q \geq 2$. Let $b = \gcd (p, q)$, $p_0 = p / b$ and $q_0 = q/b$. The case $b = 1$ is a unibranch planar singularity which was analyzed earlier. The $\bbG_m$-action is given by $\lambda \cdot (x, y) = ( \lambda^{q_0}x, \lambda^{p_0} y)$. The corresponding $\bbP{\rm Def}^{-}_s$ lies in the stratum 
$\bbP \calH ( k^b )$, where the genus is determined by  
$$ 2g - 2 = pq - p - q - b  = b^2 p_0 q_0 - bp_0 - bq_0 - b, $$
$k =  b p_0 q_0 - p_0 - q_0 - 1$, and $ \omega_C \sim k (p_1 + \cdots + p_b)$; see~\cite[Theorem 6.3]{H00} and \cite[Section 4.2]{AFS16} for more details and related references. 

In this case, we have $\ell = k+1$ and $a_i = 1$ for all $i$. It follows that  
$$\calF^{\lambda} H^0(C, m\omega_C^{\log}) = H^0 (C, ((k+1)m - \lambda) (p_1 + \cdots + p_b)). $$

For $m = 1$, if $k+1 - \lambda = 0, 1, \ldots, k$, namely, if $\lambda = 1, \ldots, k, k+1$, then the dimension of $\calF^{\lambda} H^0(C, \omega_C^{\log})$ is 
$$\# \{ (i, j) : i, j \geq 0\ {\rm and}\ qi + pj \leq (k+1 - \lambda) b \}.$$  
It follows that 
\begin{align*}
 \chi_1^{\log} (Y) & =  \chi_1 (Y)  \\
& =    \sum_{\lambda = 1}^{k+1} \lambda \cdot \Big(\# \{ (i, j) : i, j \geq 0\ {\rm and}\ qi + pj \leq (k+1 - \lambda) b \} \\
& \quad - \# \{ (i, j) : i, j \geq 0\ {\rm and}\ qi + pj \leq (k - \lambda) b \} \Big)  \\
 & =   \sum_{\lambda = 1}^{k+1} \# \{ (i, j) : i, j \geq 0\ {\rm and}\ qi + pj \leq (k+1 - \lambda) b \} \\
  & =  \sum_{n = 0}^{k} \# \{ (i, j) : i, j \geq 0\ {\rm and}\ qi + pj \leq n b \} 
 \end{align*}
 where $n = k+1 - \lambda$ in the last identity. By \cite[Proposition 4.3]{AFS16}, this yields an interesting combinatorial identity 
 \begin{align*}
   & \frac{1}{12b} \big( (pq - p - q)^2 + pq (pq - p - q + 1) - b^2 \big)  \\
   = & \sum_{n = 0}^{\frac{1}{b}(pq - p - q - b)} \# \{ (i, j) : i, j \geq 0\ {\rm and}\ qi + pj \leq n b \}. 
   \end{align*}
 
 For $m = 2$, if $2k+2 - \lambda = 0, 1, \ldots, k$, namely, if $\lambda = k+2, \ldots, 2k+2$, then 
 the dimension of $\calF^{\lambda} H^0(C, 2\omega_C^{\log})$ is 
$$\# \{ (i, j) : i, j \geq 0\ {\rm and}\ qi + pj \leq (2k+2 - \lambda) b \}.$$  
If $2k+2 - \lambda \geq k+1$, namely, $\lambda \leq k+1$, then 
the dimension of $\calF^{\lambda} H^0(C, 2\omega_C^{\log})$ is $1 - g + (2k+2 - \lambda)b$. From these values, 
one can also compute $\chi_2^{\log}(Y)$ explicitly. 
  
\section{Numerical properties of weights and characters}
\label{sec:numerical}

In this section, we analyze the numerical properties for the weights and characters of the $\bbG_m$-action on $H^0(C, m \omega_C^{\log})$ for $(C, p_1, \ldots, p_n) \in \bbP\calH(\mu)$. Specifically, we will show that the weights for $m = 1$ can determine the weights for all $m \geq 2$. Additionally, for later use, we review the relation of the characters of the $\bbG_m$-action and the $\alpha$-invariants of the singularities in the context of the log minimal model program for $\BM_g$. 

\subsection{Weights and characters}
\label{subsec:numerical}

Recall that for $\mu = (m_1, \ldots, m_n)$, we have 
$$\ell = \lcm (m_1 + 1, \ldots, m_n + 1)\quad {\rm and}\quad a_i = \frac{\ell}{m_i+1}$$
for all $i$. For $\lambda \geq 0$ and $i = 1,\ldots, n$, we define 
$$ l_{\lambda, i} \coloneqq \left\lceil \frac{\lambda}{a_i} \right\rceil. $$
Then we have 
$$ \calF^{\lambda} H^0(C, m\omega_C^{\log})  = H^0 \left(C, m \omega_C^{\log} - \sum_{i=1}^n l_{\lambda, i} p_i\right). $$
Denote by $ N_{m, \lambda}$ the number of weights equal to $\lambda$ for the $\bbG_m$-action on $H^0(C, m \omega_C^{\log})$. It follows that 
 \begin{align}
 \label{eq:weight}
  N_{m, \lambda} &=  h^0 \left(C, m \omega_C^{\log} - \sum_{i=1}^n l_{\lambda, i} p_i\right) - h^0 \left(C, m \omega_C^{\log} - \sum_{i=1}^n l_{\lambda+1, i} p_i\right). 
  \end{align}
  
In particular, for $m = 1$ and $\lambda = 0$, we have 
$$ N_{1,0} = h^0(C, \omega_C^{\log}) - h^0 (C, \omega_C) = n-1. $$
 Therefore, the weights of the $\bbG_m$-action on $H^0(C, \omega_C^{\log})$ are of the following form: 
 $$0, \ldots, 0, w_1, \ldots, w_{g} $$
where they are listed with multiplicities and the first $n-1$ weights are $0$. 

For $\lambda \geq 1$, we have $l_{\lambda, i} \geq 1$. It follows that 
$$H^0\left(C, \omega_C^{\log} - \sum_{i=1}^n l_{\lambda, i} p_i\right) = H^0\left(C, \omega_C - \sum_{i=1}^n (l_{\lambda, i}-1) p_i\right). $$
Then, $w_1, \ldots, w_g$ can be identified with the weights of the induced $\bbG_m$-action on $H^0 (C, \omega_C)$. 

Below we will show that $w_1, \ldots, w_g$ determine completely the weights of the $\bbG_m$-action on $H^0 (C, m\omega_C^{\log})$ for all $m$. 

\begin{proposition}
\label{prop:weight}
For $m \geq 2$, the weights and characters of the $\bbG_m$-action on $H^0 (C, m\omega_C^{\log})$ can be described as follows:  
\begin{enumerate}
\item[\rm (i)] The number of weights equal to $(m-1) \ell$ is $N_{m, (m-1) \ell} = n-1$. 
\item[\rm (ii)] For $ 0 \leq \lambda < (m-1) \ell$, the number of weights equal to $\lambda$ is $N_{m, \lambda} = \sum_{i=1}^n (l_{\lambda+1, i} - l_{\lambda, i})$. 
\item[\rm (iii)] The weights that are larger than $(m-1) \ell$ are $(m-1)\ell+w_1, \ldots, (m-1)\ell + w_g$.  
\item[\rm (iv)] $\chi_m^{\log} = \frac{1}{2} (m-1)m (2g-2+n) \ell +  \sum_{i=1}^g w_i$. 
\end{enumerate}
\end{proposition}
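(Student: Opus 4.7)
The plan is to analyze the divisor $D_\lambda \coloneqq m\omega_C^{\log} - \sum_{i=1}^n l_{\lambda,i}p_i$ as $\lambda$ varies, using the crucial arithmetic fact that $(m-1)\ell = (m-1) a_i (m_i+1)$ is divisible by $a_i$ for every $i$. This makes $\lambda = (m-1)\ell$ the natural transition point between two qualitatively different regimes.

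First I would handle parts (i) and (iii) simultaneously by the shift identity. Writing $\lambda = (m-1)\ell + \mu$ with $\mu \geq 0$, divisibility gives $l_{(m-1)\ell+\mu, i} = (m-1)(m_i+1) + \lceil \mu/a_i\rceil = (m-1)(m_i+1) + l_{\mu,i}$, so a direct substitution yields
\[
\calF^{(m-1)\ell+\mu} H^0(C, m\omega_C^{\log}) = \calF^{\mu} H^0(C, \omega_C^{\log}).
\]
Hence $N_{m, (m-1)\ell+\mu} = N_{1, \mu}$ for all $\mu \geq 0$. The $m=1$ analysis preceding the proposition records that the weights on $H^0(C,\omega_C^{\log})$ are $0^{n-1}, w_1, \ldots, w_g$, so $N_{1,0} = n-1$ (giving (i)) and the positive weights $w_1, \ldots, w_g$ translate by $(m-1)\ell$ to give all weights strictly greater than $(m-1)\ell$ on $H^0(C, m\omega_C^{\log})$, establishing (iii).

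Next I would prove (ii) via Riemann--Roch in the range $0 \leq \lambda < (m-1)\ell$. The bound $l_{\lambda, i} \leq (m-1)(m_i+1)$ (obtained by monotonicity and the value at $(m-1)\ell$) yields $D_\lambda \geq \omega_C^{\log}$, so $\deg D_\lambda \geq 2g-2+n \geq 2g-1$ and $h^1(D_\lambda) = 0$; the same holds for $D_{\lambda+1}$. Therefore
\[
N_{m,\lambda} = h^0(D_\lambda) - h^0(D_{\lambda+1}) = \deg D_\lambda - \deg D_{\lambda+1} = \sum_{i=1}^n (l_{\lambda+1,i} - l_{\lambda,i}).
\]

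Finally, part (iv) follows by splitting $\chi_m^{\log} = \sum_\lambda \lambda N_{m,\lambda}$ into the three ranges $0 \leq \lambda < (m-1)\ell$, $\lambda = (m-1)\ell$, and $\lambda > (m-1)\ell$. Parts (i) and (iii) contribute $(m-1)\ell(n-1) + g(m-1)\ell + \sum_{i=1}^g w_i$, while (ii) contributes an Abel summation of the form $\sum_i \sum_{\lambda=0}^{(m-1)\ell-1} \lambda(l_{\lambda+1,i}-l_{\lambda,i})$. Using $\sum_{\lambda=1}^{(m-1)\ell} \lceil \lambda/a_i\rceil = a_i \cdot \binom{(m-1)(m_i+1)+1}{2}$ together with $a_i(m_i+1) = \ell$ and $\sum_i(m_i+1) = 2g-2+n$, the three contributions collapse to $\frac{1}{2}m(m-1)(2g-2+n)\ell + \sum_{i=1}^g w_i$. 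The main bookkeeping obstacle is this last telescoping-and-simplification step; the divisibility $a_i \mid (m-1)\ell$ is what keeps every ceiling function transparent and makes the cancellation exact.
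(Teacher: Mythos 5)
Your proposal is correct and follows essentially the same route as the paper's proof: the divisibility $a_i \mid (m-1)\ell$ gives the identifications $D_{(m-1)\ell}\sim\omega_C^{\log}$ and $D_{(m-1)\ell+\mu}\sim \omega_C^{\log}-\sum_i l_{\mu,i}p_i$ for parts (i) and (iii), Riemann--Roch in the nonspecial range $\deg D_\lambda>2g-2$ gives (ii), and the same Abel-summation bookkeeping yields (iv). The only cosmetic difference is that you phrase the shift as an equality of filtration pieces rather than a linear equivalence of the underlying divisor classes, but the dimension counts are identical.
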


\begin{proof}
Note that $l_{(m-1) \ell, i} = (m-1)(m_i + 1)$ and $l_{(m-1) \ell+1, i} = (m-1)(m_i + 1) + 1$. 
Therefore, we have 
$$m \omega_{C}^{\log} - \sum_{i=1}^n l_{(m-1) \ell, i} p_i \sim \omega_C^{\log} \quad {\rm and }\quad m \omega_{C}^{\log} - \sum_{i=1}^n l_{(m-1)\ell+1, i} p_i \sim \omega_C. $$ 
It follows that $N_{m, (m-1)\ell} = n-1$.  

For $\lambda < (m-1)\ell$, we have 
$$\deg m \omega_{C}^{\log} - \sum_{i=1}^n l_{\lambda+1, i} p_i \geq \deg \omega_C^{\log} > 2g-2.$$ 
The Riemann--Roch formula implies that 
$$ N_{m, \lambda} = \sum_{i=1}^n l_{ \lambda+1, i} -  \sum_{i=1}^n l_{\lambda, i}. $$

For $\lambda > (m-1)\ell$, we have 
$$ H^0 \left(C, m \omega_C^{\log} - \sum_{i=1}^n l_{ \lambda, i} p_i\right) = H^0\left(C, \omega_C^{\log} - \sum_{i=1}^n (l_{ \lambda, i} - (m-1)\ell) p_i\right). $$ 
Therefore, the weights that are larger than $(m-1)\ell$ are given by $(m-1)\ell+w_1, \ldots, (m-1)\ell + w_g$. 

Finally, we have 
\begin{align*}
\chi_m^{\log} & = (n-1)(m-1)\ell + \sum_{\lambda=0}^{(m-1)\ell - 1} \sum_{i=1}^n \lambda (l_{\lambda+1, i} - l_{\lambda, i}) + g(m-1)\ell + \sum_{i=1}^g w_i \\
& = (m-1) \left((g+n-1)\ell +  ((m-1)\ell-1)(2g-2+n)\right) -\sum_{i=1}^n\sum_{\lambda = 1}^{(m-1)\ell -1} l_{\lambda, i}  + \sum_{i=1}^g w_i \\
& = (m-1) \left((g+n-1)\ell +  ((m-1)\ell-1)(2g-2+n)\right) \\
&\quad - \sum_{i=1}^n (1 + 2 + \cdots + (m-1)(m_i+1)) \frac{\ell}{m_i+1} +  (m-1)(2g-2+n)  + \sum_{i=1}^g w_i  \\
& = \frac{1}{2} (m-1)m  (2g-2+n) \ell +  \sum_{i=1}^g w_i. 
\end{align*}
\end{proof}

\subsection{The $\alpha$-invariants}
\label{subsec:alpha}

Given a singularity $(Y, q)\in S(\mu)$, its {\em $\alpha$-invariant} is determined by the following relation; see \cite[Definition 1.4]{AFS16}: 
\begin{align}
\label{eq:alpha}
 \frac{\chi_2^{\log}}{\chi_1^{\log}} & = \frac{13 (1-\alpha)}{2-\alpha}. 
 \end{align}
Geometrically speaking, it predicts that the singular curve formed by gluing a smooth curve nodally with {\em every} branch of $Y$ appears in the log minimal model program for the moduli space of curves at the stage of $K + \alpha \delta$; see \cite[Proposition 1.3]{AFS16}. As $\alpha$ decreases, the types of singularities become more and more complicated. Therefore, it is meaningful to classify the singularities whose $\alpha$-invariants are bounded from below by a given critical value. 

To this end, we will prove the classification result of Theorem~\ref{thm:3/8} in Section~\ref{sec:3/8}. Before we do that, we first explain a variant in the process of gluing a smooth curve nodally to $Y$. If we glue it to every branch of $Y$, then we say that this is the case {\em without dangling branches}, as named in \cite{AFS16} and also called {\em $\widehat{\calO}$-atoms} in \cite[Definition 2.6]{AFS16}. Alternatively, one can choose a subset $Q\subset \{1, \ldots, n\}$ such that each branch of $Y$ with index in $Q$ is not attached nodally to the rest of the curve, where such a loose branch is called {\em dangling} and this kind of nodal attaching is called an {\em $\widehat{\calO}^Q$-atom} in \cite[Definition 2.9]{AFS16}.  

Indeed, knowing the weights and characters in the case without dangling branches determine all other dangling cases: $\chi_1^{\log} = w_1 + \cdots + w_g$ remains the same, while $\chi_2^{\log}$ is modified by subtracting $\sum_{i\in Q}\frac{\ell}{m_i+1}$; see~\cite[Corollary 3.3 and Corollary 3.6]{AFS16}. Therefore, we choose to focus on the case 
without dangling branches, and one can use the aforementioned relation to recover the dangling cases as well.  

\section{Singularities of the nonvarying strata}
\label{sec:nonvarying}

A stratum $\bbP \calH(\mu)$ (or one of its connected components) is called {\em nonvarying}, if all Teichm\"uller curves contained in it have the same slope; equivalently, if they have the same area Siegel--Veech constant, or the same sum of nonnegative Lyapunov exponents. The hyperelliptic strata and a number of strata in low genus are known to be nonvarying; see \cite{CM12, YZ13}. They are also expected to be the only nonvarying strata based on numerical evidences. Additionally, the nonvarying strata possess special properties in affine geometry and intersection theory; see \cite{C24}. From the viewpoint of this paper, it is natural to speculate that a stratum $\bbP \calH(\mu)$ is nonvarying if and only if $S(\mu)$ consists of a {\em unique} isomorphism class $(Y,q)$ of Gorenstein singularities with $\bbG_m$-action. In other words, in this case $\bbP \calH(\mu)$ would be the locus of smooth deformations in the miniversal deformation space of $(Y,q)$.  

In this section, we prove the above speculation 
for all known nonvarying strata in \cite{CM12, YZ13}, as recalled in Theorem~\ref{thm:nonvarying}. Additionally, we classify the unique singularity class for each of them, describe their defining equations as well as weighted projective embeddings, and compute their weights, characters, and $\alpha$-invariants. Finally, we prove the remaining parts of Theorem~\ref{thm:nonvarying} in Theorem~\ref{thm:T2=0} and Theorem~\ref{thm:hyp-nonvarying}, respectively, as well as prove Theorem~\ref{thm:US} through Theorem~\ref{thm:ordinary}. 

By the descriptions in Section~\ref{sec:ex}, the uniqueness claim holds for the hyperelliptic strata: $\bbP \calH(2g-2)^{\hyp}$ and $\bbP \calH(g-1, g-1)^{\hyp}$, as well as for the minimal strata up to genus five: $\bbP\calH(4)^{\odd}$, $\bbP\calH(6)^{\odd}$, $\bbP\calH(6)^{\even}$, $\bbP\calH(8)^{\odd}$, and $\bbP\calH(8)^{\even}$, where the Weierstrass semigroup of the unique zero is nonvarying in each case.  
Therefore, it suffices to consider the remaining nonvarying strata in \cite[Figures 3 and 4]{CM12}: 
\begin{align*}
g = 3 \colon & (3,1), (2,2)^{\odd}, (2,1,1), \\
g = 4\colon & (5,1), (4,2)^{\even}, (4,2)^{\odd}, (3,3)^{\nonhyp}, (3,2,1), (2,2,2)^{\odd}, \\
g = 5\colon & (6,2)^{\odd}, (5,3). 
\end{align*}

We remark that the claim for the nonvarying strata in genus three also follows from the classification in \cite[Section 2]{B24} (by setting the $\bbG_a$-parameters to be zero in the {\em crimping data} in order to have the $\bbG_m$-action well-defined). For completeness, we include the discussion for them as well.  

To this end, we utilize the description in \cite[Section 1.3]{B24}. Suppose $(R, \mathfrak{m})$ is the complete local ring of the concerned isolated curve singularity $(Y, q)$ with $b$ branches. Let $\delta$ be the {\em $\delta$-invariant} of the singularity. The genus, $\delta$-invariant, and the number of branches satisfy the following relation: 
$$ g = \delta - b + 1. $$

Let $(\wR, \wfkm)\cong (\bbC[\![t_1]\!] \oplus \cdots \oplus \bbC[\![t_b]\!], \langle t_1, \ldots, t_b \rangle )$ be the normalization of $(R, \mathfrak{m})$, where each $t_i$ is the parameter on the corresponding branch $Y_i$. Then $\wR/R$ is a $\bbZ$-graded $R$-module, where the $i$th graded piece is 
$$ (\wR / R)_i \coloneqq \wfkm^i / ( \wfkm^i \cap R) + \wfkm^{i+1}. $$
The following observations hold; see~\cite[Section 1.3.4]{B24}, \cite[Section 2]{B22}, and \cite[Appendix A]{S11}: 
\begin{enumerate}[(1)]
\item $\delta = \sum_{i\geq 0} \dim_{\bbC} (\wR/R)_i$. 
\item $g = \sum_{i\geq 1} \dim_{\bbC} (\wR/R)_i$. 
\item If $(\wR/R)_i = (\wR/R)_j = 0$, then $(\wR/R)_{i+j} = 0$. 
\item $ \sum_{i\geq j} (\wR/R)_i$ is a grading of $\wfkm^j / (\wfkm^j \cap R)$. 
\item There are short exact sequences of $\bbC$-modules: 
$$ 0 \to \frac{\wfkm^i \cap R}{\wfkm^{i+1}\cap R} \to \frac{\wfkm^i}{\wfkm^{i+1}}\to (\wR/R)_i \to 0. $$
\end{enumerate}

Let $\alpha_i = \dim_{\bbC} (\wR / R)_i$ for $i\geq 1$. The sequence 
$$[\alpha_1\ \alpha_2 \ \alpha_3 \cdots]$$ 
(or the subsequence consisting of the nonzero entries) is called the {\em gap sequence} of the singularity, where 
$$\sum_{i\geq 1} \alpha_i = g.$$ 
Moreover, if $\alpha_i = 0$ and $\alpha_j = 0$, then $\alpha_{i+j} = 0$. We caution the reader not to confuse the gap sequence of a singularity with the Weierstrass gap sequence of a smooth point.  

Let $\fkc = {\rm Ann}_{R}(\wR / R) $ denote the {\em conductor ideal}. The singularity being Gorenstein is equivalent to 
$$ {\rm len} (R / \fkc) = {\rm len} (\wR / R) = \delta. $$
 
If $(Y, q)\in S(\mu)$ for $\mu = (m_1, \ldots, m_n)$ is Gorenstein with $\bbG_m$-action, the following additional observations hold: 
 \begin{enumerate}
\item[(G1)] $t_i\not\in R$ for all $i$.  
\item[(G2)] If $t_i$ and $t_j$ appear in a (non-decomposable) generator of $\fkm / \fkm^2$, then their exponents are proportional as  
$m_i + 1 : m_j + 1$. In particular, each $t_i$ can appear at most one time in a generator of $\fkm / \fkm^2$.
\item[(G3)] The germ of the dualizing bundle $\omega_Y$ at $q$ is generated by $dt_i / t_i^{m_i+2}$ along each branch $Y_i$. In particular, $\wfkm^{\max\{m_i+2\}_{i=1}^n}\subset R$. 
\item[(G4)] Up to rescaling each $t_i$, suppose $\sum_{i=1}^n u_i (dt_i / t_i^{m_i+2})$ is a local generator of $\omega_Y$ at $q$, where $u_i \neq 0$ for all $i$. Then $ u_j t_i^{m_i+1} - u_i t_j^{m_j+1} \in R$. 
\item[(G5)] The last nonzero entry in the gap sequence is $\alpha_{\max\{m_i+1\}_{i=1}^n} = 1$. 
\end{enumerate}

In the above, (G1) follows from the fact that a Gorenstein curve singularity, except for the node, is indecomposable; see \cite[Proposition 2.1]{AFS16}. (G2) follows from the fact that the $\bbG_m$-action on the branch of $t_i$ has weight $\ell / (m_i+1)$, where $\ell = \lcm (m_1+1, \ldots, m_n+1)$. The first part of (G3) has been verified in the proof of Theorem~\ref{thm:filtration} (iv). The second part of (G3) as well as (G4) and (G5) follow from the fact that the duality residue pairing is perfect, whose germs at the singularity form a graded pairing.  

In what follows, we adapt the strategy in \cite[Section 2]{B24} to describe the gap sequence and the generators of $\fkm / \fkm^2$ in each case.  

\subsection{The stratum $\bbP \calH(3,1)$}
\label{subsec:(3,1)}

This case was already studied in Section~\ref{subsec:E7} and the resulting singularity is $E_7$. Below we will use the above approach to provide an alternative argument. 

In this case, we have $g = 3$,  $b = 2$, and $\delta = 4$. According to the preceding observations, the gap sequence is determined by $[\alpha_1 \ \alpha_2\ \alpha_3\ 1]$, where $\alpha_4 = 1$ is the last nonzero entry and $\sum_{i\geq 1}\alpha_i = 3$. Since $\alpha_4 \neq 0$, it implies that $\alpha_1\neq 0$ and $\alpha_2\neq 0$, and hence the gap sequence is $[1\ 1\ 0\ 1]$. 

We want to describe the generators of $\fkm / \fkm^2 \pmod{\wfkm^5}$. Note that the exponents of nonzero terms of $t_1$ and $t_2$ in every (non-decomposable) generator are proportional as $4:2 = 2:1$. Moreover, $t_1^4\oplus 0\not\in R$ and $0\oplus t_2^2 \not\in R$. 
Therefore, up to rescaling the parameters, we can assume that the linear generator is $t_1^2 \oplus t_2\in R$ and another cubic generator is $t_1^3\oplus 0\in R$. 

In summary, $\fkm / \fkm^2 \pmod{\wfkm^5}$ can be generated by  
\begin{align*}
x & = t_1^2 \oplus t_2, \\
y & = t_1^3\oplus 0. 
\end{align*}
 In terms of these generators, the defining equation of this singularity is
$$ y (y^2- x^3). $$

Since $x$ and $y$ have weights $3$ and $2$ under the $\bbG_m$-action, respectively, the standard projectivization of the corresponding Gorenstein curve $Y$ as well as its deformations $(C, p_1, p_2)\in \bbP \calH(3,1)$ can be embedded in the weighted projective space $\bbP(3,2,1)$, where the divisor at infinity introduced in~\eqref{eq:infinity} is cut out by the hyperplane of weight $1$ as $ p_1 + \frac{1}{2} p_2$. 

Note that $R/{\fkc}$ is spanned by $\langle 1, x, y, x^2\rangle $. It has dimension $4 = \delta$, which verifies that this singularity is Gorenstein. The germ of the dualizing bundle at $q$ is generated by $dt_1 / t_1^5 - dt_2 / t_2^3$.  

The defining equation of this singularity coincides with the description in Section~\ref{subsec:E7}. In this case, we have 
$$ \chi_1^{\log}(3,1) = 7, \quad  \chi_2^{\log}(3,1) = 31, \quad \alpha(3,1) = \frac{29}{60}. $$

\subsection{The stratum $\bbP \calH(2,2)^{\odd}$}
\label{subsec:(2,2)}

In this case, we have $g = 3$, $b = 2$, and $\delta = 4$. The gap sequence is determined by $[\alpha_1 \ \alpha_2\ 1]$, where $\alpha_3 = 1$ is the last nonzero entry and $\sum_{i\geq 1}\alpha_i = 3$. Let $y_i$ be the point at infinity in the rational branch $Y_i$ for $i = 1, 2$. The spin parity of this case is determined by $\dim H^0(Y, y_1 + y_2)\pmod{2}$, where $t_1^{n_1} \oplus t_2^{n_2} \in H^0(Y, y_1 + y_2)$ if and only if $n_1 \leq 1$ and $n_2 \leq 1$. Since we want $H^0(Y, y_1 + y_2)$ to be one-dimensional spanned by the constant function only, there is no linear generator in $R$. Consequently, $\alpha_1 = 2$ and the gap sequence is 
$[2 \ 0\ 1]$. 

We want to describe the generators of $\fkm / \fkm^2 \pmod{\wfkm^4}$. Note that the exponents of nonzero terms of $t_1$ and $t_2$ in every (non-decomposable) generator are proportional as $3:3 = 1:1$. Since $\alpha_2 = 0$, we have the quadratic generators $t_1^2\oplus 0\in R$ and $0\oplus t_2^2\in R$. Up to rescaling the parameters, we can assume that $t_1^3 \oplus t_2^3\in R$ is the cubic generator. 
 
In summary, $\fkm / \fkm^2 \pmod{\wfkm^4}$ can be generated by  
\begin{align*}
x & = t_1^2 \oplus 0, \\
y & = 0 \oplus t_2^2, \\
z & = t_1^3\oplus t_2^3. 
\end{align*}
 In terms of these generators, the defining equations of this singularity are 
 $$ ( xy, x^3 + y^3 - z^2 ). $$ 
 This singularity is denoted by $T_7$ in Giusti's classification of simple space curve singularities; see~\cite[(7.22)]{L84Book} and \cite[Table 2]{S15}. 
 
 Since $x$, $y$, and $z$ have weights $2$, $2$, and $3$ under the $\bbG_m$-action, respectively, the standard projectivization of the corresponding Gorenstein curve $Y$ as well as its deformations $(C, p_1, p_2)\in \bbP \calH(2,2)^{\odd}$ can be embedded in the weighted projective space $\bbP(2,2,3,1)$, where the divisor at infinity is cut out by the hyperplane of weight $1$ as $ p_1 +  p_2$. 
 
 Note that $R/{\fkc}$ is spanned by $\langle 1, x, y, z\rangle $. It has dimension $4 = \delta$, which verifies that this singularity is Gorenstein. The germ of the dualizing bundle at $q$ is generated by $dt_1 / t_1^4 - dt_2 / t_2^4$.  

 Finally, we compute the weights and characters for this singularity by using~\eqref{eq:filtration}. In this case, we have $\ell = 3$ and $a_1 = a_2 = 1$. Since $\omega_C^{\log}\sim 3p_1 + 3p_2$, it follows that 
$$ \calF^{\lambda} H^0(C, \omega_C^{\log}) = H^0(C,  (3 - \lambda)p_1 + (3 - \lambda) p_2).$$  
Therefore, the filtration of $H^0(C, \omega_C^{\log})$ is given by 
\begin{align*} 
& \calF^{0}\colon H^0(C, 3p_1 + 3p_2)  \cong \bbC^4 \supsetneq \calF^{1}\colon H^0(C, 2p_1 + 2p_2) \cong \bbC^3 \\
& \supsetneq  \calF^{2} \colon H^0(C, p_1 + p_2) =  \calF^{3} \colon H^0(C, \calO) \cong \bbC   \supsetneq \calF^4 = 0. 
\end{align*}
The nontrivial weights are $1$, $1$, and $3$. Combining with Proposition~\ref{prop:weight} (iv) and~\eqref{eq:alpha},  
we thus obtain that 
$$ \chi_1^{\log}(2,2)^{\odd} = 5, \quad  \chi_2^{\log}(2,2)^{\odd} = 23, \quad \alpha(2,2)^{\odd} = \frac{19}{42}. $$

\subsection{The stratum $\bbP \calH(2,1,1)$}
\label{subsec:(2,1,1)}

In this case, we have $g = 3$,  $b = 3$, and $\delta = 5$. The gap sequence is determined by $[\alpha_1 \ \alpha_2\ 1]$, where $\alpha_3 = 1$ is the last nonzero entry and $\sum_{i\geq 1}\alpha_i = 3$.  Note that the exponents of nonzero terms of $t_1$, $t_2$, and $t_3$ in every (non-decomposable) generator are proportional as $3:2:2$. Since each individual $t_i\not\in R$, there is only one linear generator in $R$, which implies that $\alpha_1 = 2$. Therefore, the gap sequence is $[2 \ 0\ 1]$. 

We want to describe the generators of $\fkm / \fkm^2 \pmod{\wfkm^4}$. Up to rescaling the parameters, we can assume that the linear generator is $0 \oplus t_2 \oplus t_3\in R$. Since $0\oplus t_2^2\oplus 0\not\in R$ and $0 \oplus 0\oplus t_3^2\not\in R$, the other quadratic generators can be written as $t_1^2\oplus 0 \oplus 0\in R$ and $t_1^3\oplus t_2^2 \oplus 0\in R$. It follows that 
$0 \oplus t_2^3 \oplus 0 \in R$, $0 \oplus 0 \oplus t_3^3\in R$, and hence $\alpha_3 = 1$ is confirmed.  

In summary, $\fkm / \fkm^2 \pmod{\wfkm^4}$ can be generated by  
\begin{align*}
x & = 0 \oplus t_2 \oplus t_3, \\
y & = t_1^2\oplus 0 \oplus 0, \\
z & = t_1^3\oplus t_2^2 \oplus 0. 
\end{align*}
 In terms of these generators, the defining equations of this singularity are 
$$ (xy, z^2 - y^3 - x^2z ). $$ 
This singularity is denoted by $T_8$ in Giusti's classification of simple space curve singularities; see~\cite[(7.22)]{L84Book} and \cite[Table 2]{S15}.  
 
 Since $x$, $y$, and $z$ have weights $3$, $4$, and $6$ under the $\bbG_m$-action, respectively, the standard projectivization of the corresponding Gorenstein curve $Y$ as well as its deformations $(C, p_1, p_2, p_3)\in \bbP \calH(2,1,1)$ can be embedded in the weighted projective space $\bbP(3,4,6,1)$, where the divisor at infinity is cut out by the hyperplane of weight $1$ as $ \frac{1}{2}p_1 + \frac{1}{3} p_2 + \frac{1}{3}p_3$. 

Note that $R/{\fkc}$ is spanned by $\langle 1, x, y, z, x^2\rangle $. It has dimension $5 = \delta$, which verifies that this singularity is Gorenstein. The germ of the dualizing bundle at $q$ is generated by $dt_1 / t_1^4 - dt_2 / t_2^3 + dt_3 / t_3^3 $.  

Finally, we compute the weights and characters for this singularity. In this case, we have $\ell = 6$, $a_1 = 2$, and $a_2 = a_3 = 3$. 
Since $\omega_C^{\log}\sim 3p_1 + 2p_2+2p_3$, it follows that 
$$ \calF^{\lambda} H^0(C, \omega_C^{\log}) = H^0(C,  (3 - \lceil\lambda / 2\rceil)p_1 + (2 - \lceil\lambda / 3\rceil) p_2 + (2 - \lceil\lambda / 3\rceil) p_3).$$  
Therefore, the filtration of $H^0(C, \omega_C^{\log})$ is given by 
\begin{align*} 
& \calF^{0}\colon H^0(C, 3p_1 + 2p_2 + 2p_3)  \cong \bbC^5 \supsetneq \calF^{1}\colon H^0(C, 2p_1 + p_2+p_3) \\
& = \calF^{2} \colon H^0(C, 2p_1 + p_2 + p_3) \cong \bbC^3  \supsetneq \calF^{3}\colon H^0(C, p_1 + p_2 + p_2)\cong \bbC^2   \\
 & \supsetneq \calF^{4} \colon H^0(C, p_1) =  \calF^{5}\colon  H^0(C, \calO)  = \calF^{6} \colon H^0(C, \calO) \cong   \bbC \supsetneq  \calF^{7}  = 0. 
\end{align*}
The nontrivial weights are $2$, $3$, and $6$. We thus obtain that 
$$ \chi_1^{\log}(2,1,1) = 11, \quad  \chi_2^{\log}(2,1,1) = 53, \quad \alpha(2,1,1) = \frac{37}{90}. $$

\subsection{The stratum $\bbP \calH(5,1)$}
\label{subsec:(5,1)}
 
In this case, we have $g = 4$,  $b = 2$, and $\delta = 5$. The gap sequence is determined by $[\alpha_1 \ \alpha_2\ \alpha_3 \ \alpha_4 \ \alpha_5\ 1]$, where $\alpha_6 = 1$ is the last nonzero entry. Moreover, $\sum_{i\geq 1}\alpha_i = 4$ and $\alpha_1, \alpha_2, \alpha_3 \neq 0$. Therefore, the only possibility is $[1\ 1\ 1\ 0\ 0\ 1]$. 
 
We want to describe the generators of $\fkm / \fkm^2 \pmod{\wfkm^7}$. Since $\alpha_1 = 1$ and the exponents of nonzero terms of $t_1$ and $t_2$ in every (non-decomposable) generator are proportional as $6:2 = 3:1$, up to rescaling the parameters, 
the linear generator can be written as $ t_1^3 \oplus t_2$. Moreover, $\alpha_4 = \alpha_5 = 0$ and $\alpha_6 = 1$ implies that $t_1^4\oplus 0 \in R$, $ t_1^5\oplus 0\in R$, but $t_1^6 \oplus 0 \not\in R$. 

In summary, $\fkm / \fkm^2 \pmod{\wfkm^7}$ can be generated by  
\begin{align*}
x & = t_1^3 \oplus t_2, \\
y & = t_1^4\oplus 0, \\
z & = t_1^5\oplus 0. 
\end{align*}
 In terms of these generators, the defining equations of this singularity are 
 $$ (xz - y^2, x^2y - z^2). $$
This singularity is denoted by $W_9$ in Giusti's classification of simple space curve singularities; see~\cite[(7.22)]{L84Book} and \cite[Table 2]{S15}.  
 
 Since $x$, $y$, and $z$ have weights $3$, $4$, and $5$ under the $\bbG_m$-action, respectively, the standard projectivization of the corresponding Gorenstein curve $Y$ as well as its deformations $(C, p_1, p_2)\in \bbP \calH(5,1)$ can be embedded in the weighted projective space $\bbP(3,4,5,1)$, where the divisor at infinity is cut out by the hyperplane of weight $1$ as $ p_1 +  \frac{1}{3}p_2$. 
 
Note that $R/{\fkc}$ is spanned by $\langle 1, x, y, z, x^2\rangle $. It has dimension $5 = \delta$, which verifies that this singularity is Gorenstein. The germ of the dualizing bundle at $q$ is generated by $dt_1 / t_1^7 - dt_2 / t_2^3$.  

Finally, we compute the weights and characters for this singularity. In this case, we have $\ell = 6$, $a_1 = 1$, and $a_2 = 3$. Since $\omega_C^{\log}\sim 6p_1 + 2p_2$, it follows that 
$$ \calF^{\lambda} H^0(C, \omega_C^{\log}) = H^0(C,  (6 - \lambda)p_1 + (2 - \lceil\lambda / 3\rceil) p_2).$$  
Therefore, the filtration of $H^0(C, \omega_C^{\log})$ is given by 
\begin{align*} 
& \calF^{0}\colon H^0(C, 6p_1 + 2p_2)  \cong \bbC^5 \supsetneq \calF^{1}\colon H^0(C, 5p_1 + p_2) \cong \bbC^4 \\
 & \supsetneq  \calF^{2} \colon H^0(C, 4p_1 + p_2) \cong \bbC^3  \supsetneq \calF^{3}\colon H^0(C, 3p_1 + p_2)\cong \bbC^2  \\ 
 & \supsetneq \calF^{4} \colon H^0(C, 2p_1) =  \calF^{5}\colon  H^0(C, p_1)  = \calF^{6} \colon H^0(C, \calO) \cong   \bbC \supsetneq  \calF^{7}  = 0. 
\end{align*}
The nontrivial weights are $1$, $2$, $3$, and $6$. We thus obtain that 
$$ \chi_1^{\log}(5,1) = 12, \quad  \chi_2^{\log}(5,1) = 60, \quad \alpha(5,1) = \frac{3}{8}. $$

\subsection{The strata $\bbP \calH(4, 2)^{\even}$ and $\bbP \calH(4, 2)^{\odd}$}
\label{subsec:(4,2)}
 
In this case, we have $g = 4$, $b=2$, and $\delta = 5$.  The gap sequence is determined by $[\alpha_1 \ \alpha_2\ \alpha_3 \ \alpha_4 \ 1]$, where $\alpha_5 = 1$ is the last nonzero entry and $\sum_{i\geq 1}\alpha_i = 4$. 

We want to describe the generators of $\fkm / \fkm^2 \pmod{\wfkm^6}$. Since the exponents of nonzero terms of $t_1$ and $t_2$ in every (non-decomposable) generator are proportional as $5:3$, we have $\alpha_1 = 2$, and hence $\alpha_2 = 1$ or $0$. 

\begin{enumerate}
\item[(0)] Suppose $\alpha_2 = 0$. Then $\alpha_4 = 0$ and $\alpha_3 = 1$. In this case, the gap sequence is $[2 \ 0\ 1 \ 0 \ 1]$. It follows that $t_1^2 \oplus 0 \in R$, $0 \oplus t_2^2 \in R$, and $a t_1^{5} \oplus t_2^3\in R$ for some $a\neq 0$. Up to rescaling $t_1$, we can assume that $a = 1$ and 
$t_1^{5} \oplus t_2^3\in R$. 

In summary, $\fkm / \fkm^2 \pmod{\wfkm^6}$ can be generated by 
\begin{align*}
x & = t_1^2 \oplus 0, \\
y & = 0 \oplus t_2^2, \\
z & = t_1^5\oplus t_2^3. 
\end{align*} 
 In terms of these generators, the defining equations of this singularity are 
 $$ (xy, x^5 + y^3 - z^2). $$
 This singularity is denoted by $T_9$ in Giusti's classification of simple space curve singularities; see~\cite[(7.22)]{L84Book} and \cite[Table 2]{S15}.  
 
 Since $x$, $y$, and $z$ have weights $6$, $10$, and $15$ under the $\bbG_m$-action, respectively, the standard projectivization of the corresponding Gorenstein curve $Y$ as well as its deformations $(C, p_1, p_2)$ can be embedded in the weighted projective space $\bbP(6,10,15,1)$, where the divisor at infinity is cut out by the hyperplane of weight $1$ as $ \frac{1}{3}p_1 + \frac{1}{5} p_2$.  

Note that $R/{\fkc}$ is spanned by $\langle 1, x, y, z, x^2 \rangle $. It has dimension $5 = \delta$, which verifies that this singularity is Gorenstein. The germ of the dualizing bundle at $q$ is generated by $dt_1 / t_1^6 - dt_2 / t_2^4$. 

Additionally, let $y_i$ be the point at infinity in the rational branch $Y_i$ for $i = 1, 2$. The spin parity of this case is determined by $\dim H^0(Y, 2y_1 + y_2)\pmod{2}$, where $t_1^{n_1} \oplus t_2^{n_2} \in H^0(Y, 2y_1 + y_2)$ if and only if $n_1 \leq 2$ and $n_2 \leq 1$. We see that $H^0(Y, 2y_1 + y_2)$ is spanned by $\langle 1, x \rangle$, which is $2$-dimensional. Therefore, this case corresponds to $\bbP \calH(4, 2)^{\even}$. 

Finally, we compute the weights and characters for this singularity. In this case, we have $\ell = 15$, $a_1 = 3$, and $a_2 = 5$. 
Since $\omega_C^{\log}\sim 5p_1 + 3p_2$, it follows that 
$$ \calF^{\lambda} H^0(C, \omega_C^{\log}) = H^0(C,  (5 - \lceil\lambda/3\rceil )p_1 + (3 - \lceil\lambda / 5\rceil) p_2).$$  
Therefore, the filtration of $H^0(C, \omega_C^{\log})$ is given by 
\begin{align*} 
& \calF^{0}\colon H^0(C, 5p_1 + 3p_2) \cong \bbC^5 \supsetneq \calF^{1}\colon H^0(C, 4p_1 + 2p_2)  =  \calF^{2}\colon H^0(C, 4p_1 + 2p_2) \\
& = \calF^{3}\colon H^0(C, 4p_1 + 2p_2) \cong \bbC^4  \supsetneq \calF^{4}\colon H^0(C, 3p_1 + 2p_2) = \calF^{5}\colon H^0(C, 3p_1 + 2p_2)  \cong \bbC^3 \\
& \supsetneq \calF^{6}\colon H^0(C, 3p_1 + p_2) = \calF^{7} \colon H^0(C, 2p_1 + p_2) = \calF^{8}\colon  H^0(C, 2p_1 + p_2)  \\ 
& = \calF^{9}\colon H^0(C, 2p_1 + p_2)  \cong \bbC^2  \supsetneq \calF^{10} \colon H^0(C, p_1 + p_2) = \calF^{11}\colon H^0(C, p_1) = \calF^{12} \colon H^0(C, p_1)  \\
& = \calF^{13} \colon H^0(C, \calO) =  \calF^{14} \colon H^0(C, \calO) = \calF^{15} \colon H^0(C, \calO) \cong \bbC\supsetneq  \calF^{16} = 0. 
\end{align*}
The nontrivial weights are $3$, $5$, $9$, and $15$. We thus obtain that 
$$ \chi_1^{\log}(4,2)^{\even} = 32, \quad  \chi_2^{\log}(4,2)^{\even} = 152 , \quad \alpha(4,2)^{\even} = \frac{14}{33}. $$

\item[(1)] Suppose $\alpha_2 = 1$. Then the gap sequence is $[2 \ 1\ 0 \ 0 \ 1]$. It follows that $t_1^3 \oplus 0\in R$, $t_1^5 \oplus t_2^3\in R$ (up to rescaling $t_1$), $t_1^4\oplus 0\in R$, and $0\oplus t_2^4\in R$. Since $t_1^5\oplus 0\not\in R$, the quadratic generator of $\fkm / \fkm^2$ 
must be $0\oplus t_2^2$. 

In summary, $\fkm / \fkm^2 \pmod{\wfkm^6}$ can be generated by  
\begin{align*}
x & = 0 \oplus t_2^2, \\
y & = t_1^3 \oplus 0, \\
z & = t_1^5\oplus t_2^3, \\
w & = t_1^4\oplus 0.  
\end{align*} 
 In terms of these generators, the defining equations of this singularity are 
 $$ (xy, xw, yz - w^2, zw - y^3, x^3 + y^2w - z^2). $$

 Since $x$, $y$, $z$, and $w$ have weights $10$, $9$, $15 $, and $12$ under the $\bbG_m$-action, respectively, the standard projectivization of the corresponding Gorenstein curve $Y$ as well as its deformations $(C, p_1, p_2)$ can be embedded in the weighted projective space $\bbP(10,9,15,12, 1)$, where the divisor at infinity is cut out by the hyperplane of weight $1$ as $ \frac{1}{3}p_1 + \frac{1}{5} p_2$. 

Note that $R/{\fkc}$ is spanned by $\langle 1, x, y, z, w \rangle $. It has dimension $5 = \delta$, which verifies that this singularity is Gorenstein. The germ of the dualizing bundle at $q$ is generated by $dt_1 / t_1^6 - dt_2 / t_2^4$. 

Additionally, $H^0(Y, 2y_1 + y_2)$ is spanned by the constant function $1\oplus 1$, which is $1$-dimensional. Therefore, this case corresponds to $\bbP \calH(4, 2)^{\odd}$. 

Finally, we compute the weights and characters for this singularity. In this case, we have $\ell = 15$, $a_1 = 3$, and $a_2 = 5$. 
Since $\omega_C^{\log}\sim 5p_1 + 3p_2$, it follows that 
$$ \calF^{\lambda} H^0(C, \omega_C^{\log}) = H^0(C,  (5 - \lceil\lambda/3\rceil )p_1 + (3 - \lceil\lambda / 5\rceil) p_2).$$  
Therefore, the filtration of $H^0(C, \omega_C^{\log})$ is given by 
\begin{align*} 
& \calF^{0}\colon H^0(C, 5p_1 + 3p_2) \cong \bbC^5 \supsetneq \calF^{1}\colon H^0(C, 4p_1 + 2p_2)  =  \calF^{2}\colon H^0(C, 4p_1 + 2p_2) \\
& = \calF^{3}\colon H^0(C, 4p_1 + 2p_2) \cong \bbC^4  \supsetneq \calF^{4}\colon H^0(C, 3p_1 + 2p_2) = \calF^{5}\colon H^0(C, 3p_1 + 2p_2)  \cong \bbC^3 \\
& \supsetneq \calF^{6}\colon H^0(C, 3p_1 + p_2) \cong \bbC^2 \supsetneq  \calF^{7} \colon H^0(C, 2p_1 + p_2) = \calF^{8}\colon  H^0(C, 2p_1 + p_2)  \\ 
& = \calF^{9}\colon H^0(C, 2p_1 + p_2) = \calF^{10} \colon H^0(C, p_1 + p_2) = \calF^{11}\colon H^0(C, p_1) = \calF^{12} \colon H^0(C, p_1)  \\
& = \calF^{13} \colon H^0(C, \calO) =  \calF^{14} \colon H^0(C, \calO) = \calF^{15} \colon H^0(C, \calO) \cong \bbC\supsetneq  \calF^{16} = 0. 
\end{align*}
The nontrivial weights are $3$, $5$, $6$, and $15$. We thus obtain that 
$$ \chi_1^{\log}(4,2)^{\odd} = 29, \quad  \chi_2^{\log}(4,2)^{\odd} = 149, \quad \alpha(4,2)^{\odd} = \frac{79}{228}. $$
 \end{enumerate}
 
\subsection{The stratum $\bbP \calH(3, 3)^{\nonhyp}$}
\label{subsec:(3,3)}
 
In this case, we have $g = 4$, $b=2$, and $\delta = 5$.  The gap sequence is determined by $[\alpha_1 \ \alpha_2\ \alpha_3 \ 1]$, where $\alpha_4 = 1$ is the last nonzero entry. Moreover, $\sum_{i\geq 1}\alpha_i = 4$, $\alpha_1 \neq 0$, and $\alpha_2\neq 0$. Therefore, the gap sequence has only two possibilities: $[1\ 1\ 1\ 1]$ or $[2\ 1\ 0\ 1]$. Note that $\alpha_1 = 1$ or $2$ corresponds to $\dim H^0(Y, y_1 + y_2) = 2$ or $1$, respectively. Since here we consider the nonhyperelliptic component, its gap sequence is $[2\ 1\ 0\ 1]$. 

We want to describe the generators of $\fkm / \fkm^2 \pmod{\wfkm^5}$. Note that the exponents of nonzero terms of $t_1$ and $t_2$ in every (non-decomposable) generator are proportional as $4:4 = 1:1$. Up to rescaling the parameters, we can assume that the quadratic generator is $t_1^2 \oplus t_2^2$. Moreover, $t_1^3\oplus 0\in R$ and $0\oplus t_2^3\in R$.  

In summary, $\fkm / \fkm^2 \pmod{\wfkm^5}$ can be generated by 
\begin{align*}
x & = t_1^2 \oplus t_2^2, \\
y & = t_1^3 \oplus 0, \\
z & = 0 \oplus t_2^3.  
\end{align*} 
 In terms of these generators, the defining equations of this singularity are 
 $$ (yz, x^3 - y^2 - z^2). $$
 This singularity is denoted by $Z_9$ in Giusti's classification of simple space curve singularities; see~\cite[(7.22)]{L84Book} and \cite[Table 2]{S15}.  
 
  Since $x$, $y$, and $z$ have weights $2$, $3$, and $3$ under the $\bbG_m$-action, respectively, the standard projectivization of the corresponding Gorenstein curve $Y$ as well as its deformations $(C, p_1, p_2)\in \bbP \calH(3,3)^{\nonhyp}$ can be embedded in the weighted projective space $\bbP(2,3,3,1)$, where the divisor at infinity is cut out by the hyperplane of weight $1$ as $ p_1 +  p_2$. 
 
Note that $R/{\fkc}$ is spanned by $\langle 1, x, y, z, x^2 \rangle $. It has dimension $5 = \delta$, which verifies that this singularity is Gorenstein. The germ of the dualizing bundle at $q$ is generated by $dt_1 / t_1^5 - dt_2 / t_2^5$. 

Finally, we compute the weights and characters for this singularity. In this case, we have $\ell = 4$ and $a_1 = a_2 = 1$.  Since $\omega_C^{\log}\sim 4p_1 + 4p_2$, it follows that 
$$ \calF^{\lambda} H^0(C, \omega_C^{\log}) = H^0(C,  (4 - \lambda )p_1 + (4 - \lambda) p_2).$$  
Therefore, the filtration of $H^0(C, \omega_C^{\log})$ is given by 
\begin{align*} 
& \calF^{0}\colon H^0(C, 4p_1 + 4p_2) \cong \bbC^5 \supsetneq \calF^{1}\colon H^0(C, 3p_1 + 3p_2) \cong \bbC^4  
 \supsetneq \calF^{2}\colon H^0(C, 2p_1 + 2p_2) \\
& = \bbC^2 \supsetneq \calF^{3}\colon H^0(C, p_1 + p_2) = \calF^{4}\colon H^0(C, \calO) \cong \bbC \supsetneq \calF^{5} = 0. 
\end{align*}
The nontrivial weights are $1$, $1$, $2$, and $4$. We thus obtain that 
$$ \chi_1^{\log}(3,3)^{\nonhyp} = 8, \quad  \chi_2^{\log}(3,3)^{\nonhyp} = 40, \quad \alpha(3,3)^{\nonhyp} = \frac{3}{8}. $$

\subsection{The stratum $\bbP \calH(3,2,1)$}
\label{subsec:(3,2,1)}

In this case, we have $g = 4$, $b=3$, and $\delta = 6$.  The gap sequence is determined by $[\alpha_1 \ \alpha_2\ \alpha_3 \ 1]$, where $\alpha_4 = 1$ is the last nonzero entry. Moreover, $\sum_{i\geq 1}\alpha_i = 4$ and $\alpha_1 \geq \alpha_2 > 0$. Therefore, the gap sequence is $[2\ 1\ 0\ 1]$. 

We want to describe the generators of $\fkm / \fkm^2 \pmod{\wfkm^5}$. The exponents of nonzero terms of $t_1$, $t_2$, and $t_3$ in every (non-decomposable) generator are proportional as $4:3:2$. Up to rescaling the parameters, we can assume that the linear generator is $t_1^2 \oplus 0 \oplus t_3$. Similarly, we can assume that $0 \oplus t_2^2 \oplus 0$ is a quadratic generator, and 
$t_1^3\oplus 0 \oplus 0$ and $t_1^4 \oplus t_2^3 \oplus 0$ are cubic generators.  

In summary, $\fkm / \fkm^2 \pmod{\wfkm^5}$ can be generated by 
\begin{align*}
x & = t_1^2 \oplus 0 \oplus t_3, \\
y & = 0 \oplus t_2^2 \oplus 0, \\
z & = t_1^3\oplus 0 \oplus 0, \\
w & = t_1^4 \oplus t_2^3 \oplus 0.
\end{align*} 
 In terms of these generators, the defining equations of this singularity are 
 $$ (xy, yz, xw - z^2, w^2 - y^3 - xz^2, z(w - x^2)). $$
 
Since $x$, $y$, $z$, and $w$ have weights $6$, $8$, $9$, and $12$ under the $\bbG_m$-action, respectively, the standard projectivization of the corresponding Gorenstein curve $Y$ as well as its deformations $(C, p_1, p_2, p_3)\in \bbP \calH(3,2,1)$ can be embedded in the weighted projective space $\bbP(6, 8, 9, 12, 1)$, where the divisor at infinity is cut out by the hyperplane of weight $1$ as $ \frac{1}{3}p_1 + \frac{1}{4} p_2 + \frac{1}{6}p_3$. 
 
Note that $R/{\fkc}$ is spanned by $\langle 1, x, y, z, w, x^2 \rangle $. It has dimension $6 = \delta$, which verifies that this singularity is Gorenstein. The germ of the dualizing bundle at $q$ is generated by $dt_1 / t_1^5 - dt_2 / t_2^4 - dt_3 / t_3^3$. 

Finally, we compute the weights and characters for this singularity. In this case, we have $\ell = 12$, $a_1 = 3$, $a_2 = 4$, and $a_3 = 6$. 
Since $\omega_C^{\log}\sim 4p_1 + 3p_2 + 2p_3$, it follows that 
$$ \calF^{\lambda} H^0(C, \omega_C^{\log}) = H^0(C,  (4 - \lceil\lambda/3\rceil )p_1 + (3 - \lceil\lambda/4\rceil) p_2 + (2 - \lceil\lambda/6\rceil )p_3).$$  
Therefore, the filtration of $H^0(C, \omega_C^{\log})$ is given by 
\begin{align*} 
& \calF^{0}\colon H^0(C, 4p_1 + 3p_2+2p_3) \cong \bbC^6 \supsetneq \calF^{1}\colon H^0(C, 3p_1 + 2p_2+p_3)  =  \calF^{2}\colon H^0(C, 3p_1 + 2p_2+p_3) \\
& = \calF^{3}\colon H^0(C, 3p_1 + 2p_2+p_3) \cong \bbC^4 \supsetneq \calF^{4}\colon H^0(C, 2p_1 + 2p_2+p_3)  \cong \bbC^3 \\
&  \supsetneq   \calF^{5}\colon H^0(C, 2p_1 + p_2+p_3)  = \calF^{6}\colon H^0(C, 2p_1 + p_2+p_3)\cong \bbC^2  \\
& \supsetneq \calF^{7}\colon H^0(C, p_1 + p_2)  =  \calF^{8}\colon H^0(C, p_1 + p_2) = \calF^{9}\colon H^0(C, p_1) \\
& =   \calF^{10}\colon H^0(C, \calO) =  \calF^{11}\colon H^0(C, \calO) =  \calF^{12}\colon H^0(C, \calO)\cong \bbC  \supsetneq \calF^{13} = 0. 
  \end{align*}
The nontrivial weights are $3$, $4$, $6$, and $12$. We thus obtain that 
$$ \chi_1^{\log}(3,2,1) = 25, \quad  \chi_2^{\log}(3,2,1) = 133, \quad \alpha(3,2,1) = \frac{59}{192}. $$

\subsection{The stratum $\bbP \calH(2,2,2)^{\odd}$}
\label{subsec:(2,2,2)-odd}
 
In this case, we have $g = 4$, $b=3$, and $\delta = 6$.  The gap sequence is determined by $[\alpha_1 \ \alpha_2 \ 1]$, where $\alpha_3 = 1$ is the last nonzero entry. Moreover, $\sum_{i\geq 1}\alpha_i = 4$, $\alpha_1 \neq 0$, and $\alpha_1 \geq \alpha_2$. 
The gap sequence has two possibilities: $[2\ 1\ 1]$ or $[3\ 0\ 1]$. Note that $\alpha_1 = 2$ or $3$ corresponds to $\dim H^0(Y, y_1 + y_2 + y_3) = 2$ or $1$, respectively. Since here we consider the odd spin component, its gap sequence is $[3\ 0\ 1]$. 

We want to describe the generators of $\fkm / \fkm^2\pmod{\wfkm^4}$. The exponents of nonzero terms of $t_1$, $t_2$, and $t_3$ in every (non-decomposable) generator are proportional as $1:1:1$. The quadratic generators are $t_1^2 \oplus 0 \oplus 0$,  $0 \oplus t_2^2 \oplus 0$, and $0 \oplus 0 \oplus t_3^2$. Up to rescaling the parameters, we can assume that the cubic generators are $t_1^3 \oplus t_2^3 \oplus 0$ and $t_1^3 \oplus 0 \oplus t_3^3$. 

In summary, $\fkm / \fkm^2 \pmod{\wfkm^4}$ can be generated by 
\begin{align*}
x & = t_1^2 \oplus 0 \oplus 0, \\
y & = 0 \oplus t_2^2 \oplus 0, \\
z & = 0 \oplus 0 \oplus t_3^2, \\
w & = t_1^3 \oplus t_2^3 \oplus 0, \\
u & =  t_1^3 \oplus 0 \oplus t_3^3. 
\end{align*} 
 In terms of these generators, the defining equations of this singularity are 
 $$ (xy, xz, yz, yu, zw, x(w-u), x^3 + y^3 - w^2, x^3 + z^3 - u^2, y^3 + z^3 - (w-u)^2). $$
 
Since $x$, $y$, $z$, $w$, and $u$ 
 have weights $2$, $2$, $2$, $3$, and $3$ under the $\bbG_m$-action, respectively, the standard projectivization of the corresponding Gorenstein curve $Y$ as well as its deformations $(C, p_1, p_2, p_3)\in \bbP \calH(2,2,2)^{\odd}$ can be embedded in the weighted projective space $\bbP(2,2,2,3,3,1)$, where the divisor at infinity is cut out by the hyperplane of weight $1$ as $ p_1 +  p_2 + p_3$. 

Note that $R/{\fkc}$ is spanned by $\langle 1, x, y, z, w, u \rangle $. It has dimension $6 = \delta$, which verifies that this singularity is Gorenstein. The germ of the dualizing bundle at $q$ is generated by $dt_1 / t_1^4 - dt_2 / t_2^4 - dt_3 / t_3^4$. 

Finally, we compute the weights and characters for this singularity. In this case, we have $\ell = 3$ and $a_1 = a_2 = a_3 = 1$. 
Since $\omega_C^{\log}\sim 3p_1 + 3p_2 + 3p_3$, it follows that 
$$ \calF^{\lambda} H^0(C, \omega_C^{\log}) = H^0(C,  (3 -\lambda )p_1 + (3 - \lambda) p_2 + (3 - \lambda)p_3).$$  
Therefore, the filtration of $H^0(C, \omega_C^{\log})$ is given by 
\begin{align*} 
& \calF^{0}\colon H^0(C, 3p_1 + 3p_2+3p_3) \cong \bbC^6 \supsetneq \calF^{1}\colon H^0(C, 2p_1 + 2p_2+2p_3) \cong \bbC^4\\ 
&\supsetneq \calF^{2}\colon H^0(C, p_1 + p_2+ p_3) =   \calF^{3}\colon H^0(C, \calO) \cong \bbC \supsetneq \calF^{4} = 0. 
  \end{align*}
The nontrivial weights are $1$, $1$, $1$, and $3$. We thus obtain that 
$$ \chi_1^{\log}(2,2,2)^{\odd} = 6, \quad  \chi_2^{\log}(2,2,2)^{\odd} = 33, \quad \alpha(2,2,2)^{\odd} = \frac{4}{15}. $$

\subsection{The stratum $\bbP \calH(6, 2)^{\odd}$}
\label{subsec:(6,2)}
 
 In this case, we have $g = 5$, $b=2$, and $\delta = 6$.  The gap sequence is determined by 
 $[\alpha_1 \ \alpha_2 \ \alpha_3\ \alpha_4\ \alpha_5\ \alpha_6\ 1]$, where $\alpha_7 = 1$ is the last nonzero entry and $\sum_{i\geq 1}\alpha_i = 5$.  Since here we consider the odd spin component, $\dim H^0(Y, 3y_1 + y_2) = 1$, and hence any $t_1^{n_1}\oplus t_2^{n_2}\in R$ with $n_1\leq 3$ and $n_2\leq 1$ must be the constant function. Note that the exponents of nonzero terms of $t_1$ and $t_2$ in every (non-decomposable) generator of $\fkm / \fkm^2$ are proportional as $7:3$. Hence, $t_1^{i}\oplus 0 \not\in R$ for $i = 1, 2, 3$ and $0\oplus t_2 \not\in R$. It implies that the gap sequence is $[2\ 1\ 1\ 0\ 0\ 0\ 1]$.    

We want to describe the generators of $\fkm / \fkm^2 \pmod{\wfkm^8}$. Up to rescaling the parameters, we can assume that 
$t_1^7 \oplus t_2^3\in R$ is the cubic generator. Since $t_1^5\oplus 0\in R$ but $t_1^7\oplus 0\not \in R$, the quadratic generator can only be $0\oplus t_2^2$.  
 
 In summary, $\fkm / \fkm^2\pmod{\wfkm^8}$ can be generated by 
 \begin{align*}
x & = 0 \oplus t_2^2, \\
y & = t_1^7 \oplus t_2^3, \\
z & = t_1^4 \oplus 0, \\
w & = t_1^5 \oplus 0, \\
u & =  t_1^6 \oplus 0. 
\end{align*} 
In terms of these generators, the defining equations of this singularity are 
$$ (xz, xw, xu, zu - w^2, yw - u^2, yz - wu, u^2 - z^3, yu - z^2w, y^2 - x^3 - zw^2). $$

Since $x$, $y$, $z$, $w$, and $u$ have weights $14$, $21$, $12$, $15$, and $18$ under the $\bbG_m$-action, respectively, the standard projectivization of the corresponding Gorenstein curve $Y$ as well as its deformations $(C, p_1, p_2)\in \bbP \calH(6,2)^{\odd}$ can be embedded in the weighted projective space $\bbP(14,21,12,15,18,1)$, where the divisor at infinity is cut out by the hyperplane of weight $1$ as $ \frac{1}{3}p_1 + \frac{1}{7} p_2$. 

Note that $R/{\fkc}$ is spanned by $\langle 1, x, y, z, w, u \rangle $. It has dimension $6 = \delta$, which verifies that this singularity is Gorenstein. The germ of the dualizing bundle at $q$ is generated by $dt_1 / t_1^8 - dt_2 / t_2^4$. 

Finally, we compute the weights and characters for this singularity. In this case, we have $\ell = 21$, $a_1 = 3$, and 
$a_2 = 7$. Since $\omega_C^{\log}\sim 7p_1 + 3p_2$, it follows that 
$$ \calF^{\lambda} H^0(C, \omega_C^{\log}) = H^0(C,  (7 - \lceil\lambda/3\rceil )p_1 + (3 - \lceil\lambda/7\rceil) p_2).$$  
Therefore, the filtration of $H^0(C, \omega_C^{\log})$ is given by 
\begin{align*} 
& \calF^{0}\colon H^0(C, 7p_1 + 3p_2) \cong \bbC^6 \supsetneq \calF^{1}\colon H^0(C, 6p_1 + 2p_2)  = \calF^{2}\colon H^0(C, 6p_1 + 2p_2)\\ 
& =  \calF^{3}\colon H^0(C, 6p_1 + 2p_2) \cong \bbC^5  \supsetneq \calF^{4}\colon H^0(C, 5p_1 + 2p_2) = \calF^{5}\colon H^0(C, 5p_1 + 2p_2) \\
& = \calF^{6}\colon H^0(C, 5p_1 + 2p_2) \cong \bbC^4  \supsetneq \calF^{7}\colon H^0(C, 4p_1 + 2p_2) \cong \bbC^3  \supsetneq \calF^{8}\colon H^0(C, 4p_1 + p_2) \\
& = \calF^{9}\colon H^0(C, 4p_1 + p_2)   \cong \bbC^2 \supsetneq \calF^{10}\colon H^0(C, 3p_1 + p_2) = \calF^{11}\colon H^0(C, 3p_1 + p_2)  \\
& = \cdots = \calF^{20} \colon H^0(C, \calO) = \calF^{21} \colon H^0(C, \calO) = \bbC \supsetneq \calF^{22} = 0. 
  \end{align*}
 The nontrivial weights are $3$, $6$, $7$, $9$, and $21$. We thus obtain that 
$$ \chi_1^{\log}(6,2)^{\odd} = 46, \quad  \chi_2^{\log}(6,2)^{\odd} = 256, \quad \alpha(6,2)^{\odd} = \frac{43}{171}. $$ 

\subsection{The stratum $\bbP \calH(5,3)$}
\label{subsec:(5,3)}
 
 In this case, we have $g = 5$, $b=2$, and $\delta = 6$.  The gap sequence is determined by 
 $[\alpha_1 \ \alpha_2 \ \alpha_3\ \alpha_4\ \alpha_5\ 1]$, where $\alpha_6 = 1$ is the last nonzero entry and $\sum_{i\geq 1}\alpha_i = 5$. Since $\alpha_6\neq 0$, we have $\alpha_1, \alpha_2, \alpha_3 \neq 0$. Moreover, the exponents of nonzero terms of $t_1$ and $t_2$ in every (non-decomposable) generator of $\fkm / \fkm^2$ are proportional as $6:4 = 3:2$. Therefore, $\alpha_1 = 2$, and hence the gap sequence is $[2\ 1\ 1\ 0\ 0\ 1]$.  
  
 We want to describe the generators of $\fkm / \fkm^2 \pmod{\wfkm^7}$. Up to rescaling the parameters, we can assume that 
 $t_1^6 \oplus t_2^4 \in R$, while $t_1^6 \oplus 0\not\in R$ and $0 \oplus t_2^4\not\in R$. Therefore, the quadratic generator is $t_1^3 \oplus t_2^2$ and the cubic generator is $0 \oplus t_2^3$. We also need $t_1^4\oplus 0\in R$ and $t_1^5\oplus 0\in R$. 
 
 In summary, $\fkm / \fkm^2 \pmod{\wfkm^7}$ can be generated by 
 \begin{align*}
x & = t_1^3 \oplus t_2^2, \\
y & = 0 \oplus t_2^3, \\
z & = t_1^4 \oplus 0, \\
w & = t_1^5 \oplus 0. 
\end{align*} 
In terms of these generators, the defining equations of this singularity are 
$$ ( yz, yw, xw - z^2, x^3 - y^2 - zw, x^2z - w^2).  $$ 

Since $x$, $y$, $z$, and $w$ have weights $6$, $9$, $8$, and $10$ under the $\bbG_m$-action, respectively, the standard projectivization of the corresponding Gorenstein curve $Y$ as well as its deformations $(C, p_1, p_2)\in \bbP \calH(5,3)$ can be embedded in the weighted projective space $\bbP(6,9,8,10,1)$, where the divisor at infinity is cut out by the hyperplane of weight $1$ as $ \frac{1}{2}p_1 + \frac{1}{3} p_2$. 

Note that $R/{\fkc}$ is spanned by $\langle 1, x, y, z, w, x^2 \rangle $. It has dimension $6 = \delta$, which verifies that this singularity is Gorenstein. The germ of the dualizing bundle at $q$ is generated by $dt_1 / t_1^7 - dt_2 / t_2^5$. 

Finally, we compute the weights and characters for this singularity. In this case, we have $\ell = 12$, $a_1 = 2$, and 
$a_2 = 3$. Since $\omega_C^{\log}\sim 6p_1 + 4p_2$, it follows that 
$$ \calF^{\lambda} H^0(C, \omega_C^{\log}) = H^0(C,  (6 - \lceil\lambda/2\rceil )p_1 + (4 - \lceil\lambda/3\rceil) p_2).$$  
Therefore, the filtration of $H^0(C, \omega_C^{\log})$ is given by 
\begin{align*} 
& \calF^{0}\colon H^0(C, 6p_1 + 4p_2) \cong \bbC^6 \supsetneq \calF^{1}\colon H^0(C, 5p_1 + 3p_2)  = \calF^{2}\colon H^0(C, 5p_1 + 3p_2) \\
& \cong \bbC^5 \supsetneq \calF^{3}\colon H^0(C, 4p_1 + 3p_2) \cong \bbC^4 \supsetneq \calF^{4}\colon H^0(C, 4p_1 + 2p_2) \cong \bbC^3 \\
& \supsetneq \calF^{5}\colon H^0(C, 3p_1 + 2p_2) = \calF^{6}\colon H^0(C, 3p_1 + 2p_2) \cong \bbC^2 \\
& \supsetneq \calF^{7}\colon H^0(C, 2p_1 + p_2) = \cdots = \calF^{12}\colon H^0(C, \calO) \cong \bbC \supsetneq \calF^{13} = 0. 
  \end{align*}
 The nontrivial weights are $2$, $3$, $4$, $6$, and $12$. We thus obtain that 
$$ \chi_1^{\log}(5,3) = 27, \quad  \chi_2^{\log}(5,3) = 147, \quad \alpha(5,3) = \frac{19}{68}. $$ 

\subsection{The {\em varying} stratum $\bbP \calH(1,1,1,1)$}
\label{subsec:(1,1,1,1)}

As mentioned in Remark~\ref{rem:cross-ratio}, it was already observed in \cite[Remark 2.1]{B24} that the singularity classes corresponding to the varying stratum $\bbP \calH(1,1,1,1)$ in genus three have one-dimensional moduli, determined by the cross-ratio of the four zeros in the line section of the canonical map. There is a special case when the underlying curve is hyperelliptic and the four zeros form two pairs of hyperelliptic conjugate points. In what follows we analyze the two cases separately. 

\begin{enumerate}[(1)]
\item First, the canonical model of a non-hyperelliptic smooth curve $C$ of genus three is a plane quartic, where a canonical divisor $p_1+p_2+p_3+p_4$ in $C$ is cut out by a line $L$. The resulting singularity $(Y,q)$ corresponds to a cone of four lines through $p_1, p_2, p_3$, and $p_4$ that meet at $q$. Without loss of generality, we can write down the defining equation of $(Y,q)$ as  
$$ xy (x-y)(x- cy), $$
where $c\neq 0,1,\infty$ is determined by the cross-ration of 
the four zeros in $L$. 

Since in this case $(Y,q)$ is a planar curve singularity, its miniversal deformation space can be described by 
\begin{align*}
& xy (x-y)(x- cy) + \lambda_1 x^2y^2 + \lambda_2 x^2y + \lambda_3 xy^2 \\
+\ & \lambda_4 x^2 + \lambda_5 xy + \lambda_6 y^2 + \lambda_7 x + \lambda_8 y + \lambda_9. 
\end{align*}
Since $x$ and $y$ both have weight $1$, the weights of the deformation parameters $\lambda_1, \ldots, \lambda_9$ are 
$0, 1, 1, 2, 2, 2, 3, 3, 4$, respectively. In particular, 
$\bbP{\rm Def}^{-}(Y,q)$ is isomorphic to the weighted projective space $\bbP(1, 1, 2, 2, 2, 3, 3, 4)$. Additionally, the deformation space $\bbP{\rm Def}^{0}(Y,q)$ of weight $0$ given by the direction of $\lambda_1$ parameterizes equisingular deformations that alter the cross-ratio of the four zeros in $L$. 

\item Next, suppose $(Y, q)$ arises from the degeneration of 
a hyperelliptic curve $C$. Up to reordering the zeros, we can assume that $p_1+p_2\sim p_3+p_4$ is the hyperelliptic $g^1_2$. Let $t_i$ be the parameter of the rational branch attached to each $p_i$. Up to rescaling the parameters, 
\begin{align*}
x & = t_1\oplus {\rm i} t_2\oplus 0 \oplus 0, \\
y & = 0\oplus 0 \oplus t_3\oplus {\rm i} t_4, \\
z & = t_1^2 \oplus 0 \oplus (-t_3^2)\oplus 0 
\end{align*}
generate the local ring of the singularity $(Y, q)$. In this case, the defining equations of $(Y, q)$ are 
$$ (xy, z(z - x^2 - y^2)). $$
Note that it is {\em not} a planar singularity. 

By using the software \textsf{Singular}, we can describe the miniversal deformation space of this hyperelliptic singularity as follows: 
\begin{align*}
&\begin{pmatrix}
    xy\\
    z^2-x^2z-y^2z\\
\end{pmatrix}+\lambda_1\begin{pmatrix}
    z\\
  0\\
\end{pmatrix}+\lambda_2\begin{pmatrix}
      y\\
   0\\
\end{pmatrix}+\lambda_3\begin{pmatrix}
      x\\
   0\\
\end{pmatrix}+\lambda_4\begin{pmatrix}
      1\\
   0\\
\end{pmatrix}\\
+\ & \lambda_5\begin{pmatrix}
      0\\
   z\\
\end{pmatrix}+\lambda_6\begin{pmatrix}
      0\\
   y^2\\
\end{pmatrix}+\lambda_7\begin{pmatrix}
      0\\
   y\\
\end{pmatrix}+\lambda_8\begin{pmatrix}
      0\\
   x\\
\end{pmatrix}+\lambda_9\begin{pmatrix}
      0\\
   1\\
\end{pmatrix}. 
\end{align*}
Since the weights of $x$, $y$, and $z$ are $1$, $1$, and $2$, respectively, the weights of the deformation parameters $\lambda_1, \ldots, \lambda_9$ are 
$0, 1, 1, 2, 2, 2, 3, 3, 4$, respectively. Interestingly, $\bbP{\rm Def}^{-}(Y,q)$ is still isomorphic to the weighted projective space $\bbP(1, 1, 2, 2, 2, 3, 3, 4)$. Additionally, the deformation space $\bbP{\rm Def}^{0}(Y,q)$ of weight $0$ given by the direction of $\lambda_1$ deforms this hyperelliptic singularity back to the preceding case of planar singularities by using the relation $xy + \lambda_1 z = 0$. 
\end{enumerate}

In summary, the union of $\bbP{\rm Def}^{-}(Y, q)$ for $(Y,q)\in S(1,1,1,1)$ forms a weighted projective bundle with fiber $\bbP(1, 1, 2, 2, 2, 3, 3, 4)$ over the one-dimensional base of cross-ratios, where the degenerate cross-ratios correspond to the three hyperelliptic loci given by $p_1+p_2\sim p_3+p_4$, $p_1+p_3\sim p_2+p_4$, or $p_1+p_4\sim p_2+p_3$. In particular, $\bbP\calH(1,1,1,1)$ is the complement of the discriminant locus of singular deformations in this weighted projective bundle. 

\subsection{The {\em varying} stratum $\bbP \calH(2,2,2)^{\even}$}
\label{subsec:(2,2,2)-even}

For completeness, here we consider another varying stratum $\bbP \calH(2,2,2)^{\even}$ in genus four. We will show that its singularity classes are {\em not} unique. 
 
In this case, we have $g = 4$, $b=3$, and $\delta = 6$.  Recall the discussion in Section~\ref{subsec:(2,2,2)-odd}, which implies that the gap sequence for this even spin component is $[2\ 1\ 1]$. 

We want to describe the generators of $\fkm / \fkm^2\pmod{\wfkm^4}$. The exponents of nonzero terms of $t_1$, $t_2$, and $t_3$ in every (non-decomposable) generator are proportional as $1:1:1$. Up to rescaling and reordering the parameters, we can assume that the linear generator is either $t_1 \oplus t_2 \oplus 0$ or $t_1 \oplus t_2 \oplus  t_3$. 
 
\begin{enumerate}[(1)]
\item First, suppose $t_1 \oplus t_2 \oplus  t_3 \in R$. Since each individual $t_i^3\not\in R$, up to reordering and rescaling the parameters, we can assume that a quadratic generator is $t_1^2 \oplus a t_2^2 \oplus 0$, where $a\neq 0, 1$. 

In this case, $\fkm / \fkm^2\pmod{\wfkm^4}$ can be generated by 
\begin{align*}
x & = t_1 \oplus t_2 \oplus t_3, \\
y & = t_1^2 \oplus (at_2^2) \oplus 0. 
\end{align*} 
 In terms of these generators, the defining equation of this singularity is 
 $$ y(y-x^2)(y- ax^2).   $$
 
 Since $x$ and $y$ have weights $1$ and $2$ under the $\bbG_m$-action, respectively, the standard projectivization of the corresponding Gorenstein curve $Y$ as well as its deformations $(C, p_1, p_2, p_3)$ can be embedded in the weighted projective space $\bbP(1,2,1)$, where the divisor at infinity is cut out by the hyperplane of weight $1$ as $ p_1 +  p_2 + p_3$. 

Note that $R/{\fkc}$ is spanned by $\langle 1, x,  y, x^2, xy, x^3 \rangle $. It has dimension $6 = \delta$, which verifies that this singularity is Gorenstein. The germ of the dualizing bundle at $q$ is generated by $ dt_1 / t_1^4 + dt_2 / t_2^4 - 2 (dt_3 / t_3^4)$. 

Moreover, note that $h^0(C, p_i + p_j) = 1$ for all $i$ and $j$, which implies that in this case the underlying curve is not hyperelliptic.  

\item Next, suppose $t_1\oplus t_2 \oplus 0 \in R$. Then $t_1^2 \oplus t_2^2 \oplus 0 \in R$. Since each individual $t_i^3\not\in R$, the other quadratic generator must be $0\oplus 0 \oplus t_3^2$. Up to rescaling $t_3$, the other cubic generator can be written as $ t_1^3 \oplus 0 \oplus t_3^3$. 

In this case, $\fkm / \fkm^2 \pmod{\wfkm^4}$ can be generated by 
\begin{align*}
x & = t_1 \oplus t_2 \oplus 0, \\
y & = 0 \oplus 0 \oplus t_3^2, \\
z & = t_1^3 \oplus 0 \oplus t_3^3. 
\end{align*} 
 In terms of these generators, the defining equations of this singularity are 
$$ (xy, z^2 - y^3 - x^3z).  $$

Since $x$, $y$, and $z$ have weights $1$, $2$, and $3$ under the $\bbG_m$-action, respectively, the standard projectivization of the corresponding Gorenstein curve $Y$ as well as its deformations $(C, p_1, p_2, p_3)$ can be embedded in the weighted projective space $\bbP(1,2,3,1)$, where the divisor at infinity is cut out by the hyperplane of weight $1$ as $ p_1 +  p_2 + p_3$. 

Note that $R/{\fkc}$ is spanned by $\langle 1, x, y, z, x^2, x^3 \rangle $. It has dimension $6 = \delta$, which verifies that this singularity is Gorenstein. The germ of the dualizing bundle at $q$ is generated by $dt_1 / t_1^4 - dt_2 / t_2^4 - dt_3 / t_3^4$. 

In this case, $H^0(Y, y_1 + y_2)$ is spanned by $\langle 1\oplus 1, x\rangle$ and $H^0(Y, 2y_3)$ is spanned by $\langle 1\oplus 1, y\rangle$. Therefore, this case corresponds to the hyperelliptic locus, where the three zeros consist of a pair of hyperelliptic conjugate points together with a Weierstrass point. 
\end{enumerate}

Finally, we observe that although the resulting singularities are different for the hyperelliptic and nonhyperelliptic loci in $\bbP\calH(2,2,2)^{\even}$, the filtrations of $H^0(C, \omega_C^{\log})$ are the same for both cases as follows: 
\begin{align*} 
& \calF^{0}\colon H^0(C, 3p_1 + 3p_2 + 3p_3) \cong \bbC^{6} \supsetneq \calF^{1}\colon H^0(C, 2p_1 + 2p_2 + 2p_3) \cong 
\bbC^4 \\
& \supsetneq \calF^{2}\colon H^0(C, p_1 + p_2 + p_3) \cong \bbC^2  \supsetneq  \calF^{3}\colon H^0(C, \calO) \cong 
\bbC  \supsetneq \calF^{4} = 0. 
\end{align*}
The nontrivial weights are $1$, $1$, $2$, and $3$. We thus obtain that 
$$ \chi_1^{\log}(2,2,2)^{\even} = 7, \quad  \chi_2^{\log}(2,2,2)^{\even} = 34, \quad \alpha(2,2,2)^{\even} = \frac{23}{57}. $$

\subsection{Nonvarying strata without obstructions}
\label{subsec:T2}

Since we have determined the defining equation of the unique singularity for each nonvarying stratum, by using the software \textsf{Singular}, we can compute the dimension of the obstruction space $T^2$ for each of them; see~\cite[Remark 1.32.1]{GLS} for a few examples of running such a program.  In particular, this leads to the following result. 

\begin{theorem}
\label{thm:T2=0}
Let $(Y, q)$ be the unique Gorenstein singularity with $\bbG_m$-action that corresponds to one of the following nonvarying strata: 
\begin{align*}
 \mu =\ & (3,1), (2,2)^{\odd}, (2,1,1), (5,1), (4,2)^{{\rm even}/{\rm odd}}, (3,3)^{\nonhyp}, (3,2,1), (5,3) \\
  & (2g-2)^{{\rm even}/{\rm odd}}\ {\rm for}\ g\leq 5, (2g-2)^{\hyp}\ {\rm and}\ (g-1, g-1)^{\hyp}\ {\rm for\ all}\ g.  
 \end{align*}
 Then the space of obstructions $T^2$ for $(Y, q)$ is $\{0\}$ and the corresponding stratum $\bbP\calH(\mu)$ can be realized as a hypersurface complement in a weighted projective space that can be described explicitly. Additionally, $\bbP\calH(\mu)$ is a rational variety and the rational Chow group of $\bbP\calH(\mu)$ is trivial in any positive degrees. 
  \end{theorem}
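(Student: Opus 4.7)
The plan is to reduce the theorem to a case-by-case verification that $T^2 = \{0\}$, after which the remaining assertions follow formally from Pinkham's theory together with standard excision.

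Starting from the explicit quasi-homogeneous defining ideals produced in Section~\ref{sec:nonvarying} -- and, for the $ADE$ and unibranched planar singularities $\bbC[t^p,t^q]$ with $\gcd(p,q)=1$, from the descriptions in Section~\ref{subsec:monomial} -- I would compute $T^1$ and $T^2$ via the Schlessinger resolution. When $(Y,q)$ is a local complete intersection the vanishing $T^2 = 0$ is automatic; this covers $A_{2g} \leftrightarrow (2g-2)^\hyp$, $A_{2g+1} \leftrightarrow (g-1,g-1)^\hyp$ for all $g$, the three cases $E_6 \leftrightarrow (4)^\odd$, $E_7 \leftrightarrow (3,1)$, $E_8 \leftrightarrow (6)^\even$, the unibranched planar cases accounting for the remaining $(2g-2)^{\odd/\even}$ with $g\leq 5$, and the strata $(2,2)^\odd$, $(3,3)^\nonhyp$, $(4,2)^\even$ whose defining ideals each consist of two equations in three ambient variables. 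For the non-CI cases $(2,1,1)$, $(5,1)$, $(4,2)^\odd$, $(3,2,1)$, $(5,3)$, quasi-homogeneity reduces $T^2$ to a finite $\bbG_m$-graded computation in each weight, and I would discharge the vanishing via the \textsf{Singular} calculation indicated in the statement.

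Once $T^2 = \{0\}$ is in hand, the rest is formal. Pinkham's theorem reviewed in Section~\ref{subsec:def} gives smoothness of ${\rm Def}(Y,q)$, so ${\rm Def}^-(Y,q)$ is identified as a $\bbG_m$-variety with the affine space $T^{1,-}$ equipped with its positive grading, and its quotient $[({\rm Def}^-(Y,q)\setminus 0)/\bbG_m] = \bbP(T^{1,-})$ is the desired weighted projective space, with weights read off from the graded pieces of $T^{1,-}$ (themselves determined by the defining equations of $(Y,q)$). Combining the uniqueness of $(Y,q)$ in $S(\mu)$ established in Section~\ref{sec:nonvarying} with Theorem~\ref{thm:filtration}(iv) yields
$$\bbP\calH(\mu) \;=\; \bbP{\rm Def}^-_s(Y,q) \;=\; \bbP(T^{1,-})\setminus\Delta,$$
where $\Delta$ is the discriminant hypersurface cut out by a single quasi-homogeneous polynomial, giving the advertised hypersurface complement description.

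Rationality and triviality of the rational Chow group are then standard: $\bbP(T^{1,-})$ is a finite quotient of $\bbP^{N-1}$ with $N = \dim T^{1,-}$, hence rational with $A^*(\bbP(T^{1,-}))_\bbQ = \bbQ[h]/(h^N)$, and the class of $\Delta$ is a positive rational multiple of the hyperplane class $h$, so the excision surjection $A^*(\bbP(T^{1,-}))_\bbQ \twoheadrightarrow A^*(\bbP\calH(\mu))_\bbQ$ kills $h$ and therefore the target is concentrated in degree zero; rationality is inherited from the ambient space. The main obstacle is the $T^2 = \{0\}$ verification in the non-complete-intersection cases, where one must check vanishing in every $\bbG_m$-weight -- a mechanical but finite check that the \textsf{Singular} computation carries out uniformly.
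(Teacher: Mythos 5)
Your handling of the $T^2=\{0\}$ step is essentially the paper's: the paper likewise offers the direct \textsf{Singular} check, and as an alternative invokes the fact that Gorenstein singularities of embedding codimension at most three are unobstructed (your lci observation covers codimension $\leq 2$, which is a special case). The identification ${\rm Def}^{-}(Y,q)\cong T^{1,-}$ and the passage to $\bbP(T^{1,-})$ are also as in the paper.

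The genuine gap is your assertion that the complement $\Delta=\bbP(T^{1,-})\setminus\bbP\calH(\mu)$ is ``the discriminant hypersurface cut out by a single quasi-homogeneous polynomial.'' Nothing in your argument establishes that the locus of singular deformations is pure codimension $1$. For isolated complete intersection singularities this is classical, but several singularities on the list --- exactly the ones you flag as non-CI, namely those for $(2,1,1)$, $(5,1)$, $(4,2)^{\odd}$, $(3,2,1)$, $(5,3)$ --- have embedding codimension $3$, and there the divisoriality of the discriminant is not automatic. The paper closes this by an external input: the nonvarying strata are already known to be affine (\cite[Theorem 1.1]{C24}), and the complement of an affine open subset is pure codimension $1$ by \cite[Proposition 1]{G69}; only then is $\Delta$ a hypersurface. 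Without this step your final Chow-group argument also fails: if $\Delta$ had codimension $\geq 2$, excision would leave the hyperplane class $h$ alive in $A^1(\bbP\calH(\mu))_\bbQ$, contradicting the claimed triviality in positive degrees. So the hypersurface statement is not a formality but the one place where a nontrivial geometric input about the strata (affineness) must be injected.
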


\begin{proof}
As mentioned before, we can verify that $\dim T^2 = 0$ for these strata by using \textsf{Singular}.  Alternatively, for each of the strata listed above, the defining ideal of the corresponding singularity requires at most four parameters, and Gorenstein singularities with embedding codimension at most three have no infinitesimal obstructions; see~\cite{H80}. Consequently, $\bbP\calH(\mu) = \bbP{\rm Def}^{-}_s(Y,q)$ is an open subset of the weighted projective space $\bbP(T^{1,-})$; see Section~\ref{subsec:def}. Note that these nonvarying strata are known to be affine; see~\cite[Theorem 1.1]{C24}. It implies that the complement of each $\bbP\calH(\mu)$ in its $\bbP(T^{1,-})$ is a hypersurface of pure codimension $1$; see~\cite[Proposition 1]{G69}, where the hypersurface is the discriminant locus parameterizing singular deformations in $\bbP{\rm Def}^{-}(Y,q)$. Regarding the explicit description of the corresponding weighted projective space, see Example~\ref{ex:wps} below for a demonstration. The remaining claims thus follow from the facts that weighted projective spaces are rational varieties and the rational Chow ring of a weighted projective space is generated by the class of a hypersurface; see~\cite[Lemma 4.7]{I22}. 
\end{proof}

\begin{example}
\label{ex:wps}
Consider the nonvarying stratum $\bbP\calH(2,1,1)$. The defining equations of the corresponding singularity $(Y,q)$ have been given in Section~\ref{subsec:(2,1,1)}.  
With the defining equations at hand, we can compute its miniversal deformation space by using the software \textsf{Singular} as follows: 
\begin{itemize}
        \item[] \texttt{LIB "sing.lib";}
        \item[] \texttt{ring R = 0,(x,y,z),dp;}
        \item[] \texttt{ideal I = xy, z2-y3-x2z;}
        \item[] \texttt{deform(I);}
    \end{itemize} 
\begin{align*}
\begin{pmatrix}
    xy\\
    z^2-y^3-x^2z\\
\end{pmatrix} & +\lambda_1\begin{pmatrix}
    0\\
    z\\
\end{pmatrix}+\lambda_2\begin{pmatrix}
      0\\
    y\\
\end{pmatrix}+\lambda_3\begin{pmatrix}
      0\\
    x\\
\end{pmatrix}+\lambda_4\begin{pmatrix}
      0\\
    1\\
\end{pmatrix} \\
& +\lambda_5\begin{pmatrix}
      z\\
    0\\
\end{pmatrix}+\lambda_6\begin{pmatrix}
      y\\
    0\\
\end{pmatrix}+\lambda_7\begin{pmatrix}
      x\\
    0\\
\end{pmatrix}+\lambda_8\begin{pmatrix}
      1\\
    0\\
\end{pmatrix}.
\end{align*}

The weights of $x$, $y$, and $z$ are $3$, $4$, and $6$, respectively. The weights of the deformation parameters  $(\lambda_1,\ldots,\lambda_8)$ can be read off from those of $(z,y^2,xz,y^3,xy/z,x,y,xy)$ which are $(6,8,9,12,1,3,4,7)$. Therefore, $\bbP\calH(2,1,1)$ can be compactified by the miniversal deformation space of $(Y, q)$ as the weighted projective space 
$${\rm Def}(Y, q)\cong \bbP(1,3,4,6,7,8,9,12).$$ 
\end{example} 

\begin{remark}
\label{rem:T2}
The structure of $\bbP{\rm Def}^{-}(Y,q)$ for monomial singularities with fixed semigroups has also been understood fairly well in low genus; see~\cite{N08, S23}. 

Comparing to the first list in Theorem~\ref{thm:nonvarying}, the two nonvarying strata with signature $(2,2,2)^{\odd}$ and $(6,2)^{\odd}$ are missing in Theorem~\ref{thm:T2=0}. Indeed, we have verified by using \textsf{Singular} that $T^2\neq 0$ for both of the corresponding singularities. Additionally, the defining ideal for each of the two singularities requires five parameters, and hence the embedding codimension is four, which is beyond the aforementioned range of having no obstructions. Although we expect that the same results on their rationality and Chow ring structure should still hold, some extra ideas are needed for addressing these two exceptional cases. 

Moreover, as mentioned in Section~\ref{subsec:ordinary}, we can add entries of $0$ into the signature $\mu$, i.e., zeros of order $0$, to generalize the setting of nonvarying strata. Nevertheless, increasing the number of zero entries can lead to more complicated deformations for the corresponding singularities. For example, if $(Y, q)$ is the elliptic $n$-fold point, then $T^2$ is nonzero for $n\geq 6$ and $\bbP{\rm Def}^{-}(Y, q)$ is {\em not} a weighted projective space for $n = 6$; see~\cite[Proposition 1.2]{S25} and~\cite[Theorem 1.5.7]{LP19}.  

Finally, we remark that for large genus, the birational type of a (nonhyperelliptic) stratum $\bbP\calH(\mu)$ is expected to be of {\em general type}; see~\cite{CCM24}. 
\end{remark}

\subsection{Nonvarying strata with ordinary marked points}
\label{subsec:ordinary}

We can generalize the definition of nonvarying strata by including zeros of order $0$, i.e., ordinary marked points. Let $\mu' = (m_1, \ldots, m_n, 0, \ldots, 0)$, where $\mu = (m_1, \ldots, m_n)$ is one of the above nonvarying signatures and 
the new parameters $s_1, \ldots, s_k$ correspond to the branches of the last $k$ ordinary points. Up to rescaling the parameters, we can assume that 
$$dt_1 / t_1^{m_1+2} - \cdots - ds_1 / s_1^2 - \cdots - ds_k / s_k^2$$ 
generates the dualizing bundle at the Gorenstein singularity. Then, we just need to add $t_1^{m_1+1} \oplus s_j$ into $R$ for $j = 1, \ldots, k$.  

For example, consider $\mu' = (3,1, 0, \ldots, 0)$. Based on the description in Section~\ref{subsec:(3,1)}, $\fkm / \fkm^2$ for the unique singularity class in $S(\mu')$ can be generated by  
\begin{align*}
x & = t_1^2 \oplus t_2 \oplus 0 \cdots \oplus 0, \\
y & = t_1^3\oplus 0 \oplus 0 \cdots \oplus 0,\ \\
z_j & = t_1^4 \oplus 0 \oplus \cdots \oplus 0 \oplus s_j \oplus 0 \oplus \cdots \oplus 0, \quad j = 1, \ldots, k. 
\end{align*}

In general, we have 
$$\chi_1^{\log}(\mu') = \chi_1^{\log}(\mu), \quad \chi_2^{\log}(\mu')  = \chi_2^{\log}(\mu) + k \ell,$$ 
where $\ell = \lcm (m_1+1, \ldots, m_n+1)$.

The above observation also leads to an application towards the $K(\pi,1)$-conjecture reviewed in Section~\ref{subsec:Kpi1}. The following is a refined formulation of Theorem~\ref{thm:US}. 

\begin{theorem}
\label{thm:ordinary}
Given $(Y, q)\in S(\mu)$, let $(Y', q')\in S(\mu,0)$ be the singularity class produced by adding an ordinary marked point away from the zeros $p_1,\ldots, p_n$ in the construction of $(Y,q)$. Then $\bbP{\rm Def}^{-}_s(Y',q')$ is the universal curve 
punctured at $p_1,\ldots, p_n$ over $\bbP{\rm Def}^{-}_s(Y,q)$. Moreover, if $\bbP{\rm Def}^{-}_s(Y,q)$ is $K(\pi,1)$, then $\bbP{\rm Def}^{-}_s(Y',q')$ is also $K(\pi,1)$. 

In particular, the $K(\pi,1)$-property holds for the discriminant complement in the miniversal deformation space of the singularities $U_7$, $U_8$, $U_9$, $S_{2g+2}$, and $S_{2g+3}$, which arise from the nonvarying strata for $\mu' = 
(4,0)^{\odd}$, $(3,1,0)$, $(6,0)^{\even}$, $(2g-2,0,0)^{\hyp}$, and $(g-1,g-1,0,0)^{\hyp}$, respectively. 
\end{theorem}

We refer to \cite[(7.22)]{L84Book} and \cite[Table 2]{S15} for the labeling of the above singularities and their defining ideals. 

\begin{proof}
The first claim follows from the preceding description about how to add the parameter $s$ into the parameterization for the new branch attached to the ordinary marked point. 

For the second claim, note that for $2g-2+n > 0$, a curve of genus $g$ punctured at $n$ points is $K(\pi,1)$. Then the desired claim follows from the standard exact homotopy sequence of fibration if the base is also $K(\pi, 1)$. 

Finally, note that the $K(\pi,1)$-property holds for $ADE$ singularities by \cite{B73} and \cite{D72}, which include those from the nonvarying strata by deleting the entries of $0$ from $\mu'$. Moreover, the concerned singularities are isolated complete intersections and their Milnor numbers are equal to the dimensions of the corresponding strata of differentials; see \cite[Theorem 7.2.22]{G20} and Remark~\ref{rem:smoothing}. Therefore, $\bbP{\rm Def}^{-}_s = \bbP{\rm Def}_s$ holds for them, which implies the last claim. 

For completeness, we list the defining ideals and parameterizations for these singularities as follows: 
\begin{align*}
U_7\colon & (xz - y^2, yz - x^3)  \\
& x = t^3 \oplus 0, \\
& y = t^4 \oplus 0, \\
& z = t^5 \oplus s.  
\end{align*}
\begin{align*}
U_8\colon & (xz - y^2, yx^2 - yz)  \\
& x = t_1^2 \oplus t_2 \oplus 0, \\
& y = t_1^3 \oplus 0 \oplus 0, \\
& z = t_1^4 \oplus 0 \oplus s.  
\end{align*}
\begin{align*}
U_9\colon & (xz - y^2, yz - x^4)  \\
& x = t^3 \oplus 0, \\
& y = t^5 \oplus 0, \\
& z = t^7 \oplus s.  
\end{align*}
\begin{align*}
S_{2g+2}\colon & (xz_1 - xz_2, z_1z_2 - x^{2g-1})  \\
& x = t^2 \oplus 0 \oplus 0, \\
& y = t^{2g+1} \oplus 0 \oplus 0, \\
& z_1 = t^{2g-1} \oplus s_1\oplus 0, \\
& z_2 = t^{2g-1} \oplus 0 \oplus s_2.  
\end{align*}
\begin{align*}
S_{2g+3}\colon & (xz_1 - xz_2, x^gz_2- z_1z_2)  \\
& x = t_1 \oplus t_2 \oplus 0 \oplus 0, \\
& y = t_1^{g+1}\oplus (-t_2^{g+1}) \oplus 0 \oplus 0, \\
& z_1 = t_1^{g} \oplus 0 \oplus s_1\oplus 0, \\
& z_2 = t_1^{g} \oplus 0 \oplus 0\oplus s_2.
\end{align*}
We remark that for $S_{2g+2}$ and $S_{2g+3}$, the original parameter $y$ in each case can be generated by the other parameters, so $y$ does not appear as a generator in the defining ideal. 
\end{proof}

\subsection{Nonvarying hyperelliptic loci}
\label{subsec:hyp-nonvarying}

Besides the hyperelliptic components of the strata, in general, one can define the locus of {\em hyperelliptic differentials} $\calH(\mu)^{\hyp}$ in $\calH(\mu)$ for 
$\mu = (2a_1, \ldots, 2a_m, b_1, b_1, \ldots, b_n, b_n)$, where differentials in $\calH(\mu)^{\hyp}$ are defined on  hyperelliptic curves such that each zero of order $2a_i$ is a Weierstrass point and each pair of zeros of order $b_j$ is hyperelliptic conjugate. 

\begin{theorem}
\label{thm:hyp-nonvarying}
The construction in Section~\ref{subsec:construction} produces isomorphic singularities for all hyperelliptic differentials in $\calH(\mu)^{\hyp}$, i.e., $S(\mu)^{\hyp}$ consists of a unique singularity class (up to relabeling the marked zeros). 
\end{theorem}

\begin{proof}
Let $t_i$ be the parameter of the rational branch associated to the zero of order $a_i$, and let $r_j$ and $s_j$ be the parameters associated to the pair of zeros of order $b_j$. 
By the condition (G4), up to rescaling the parameters, we can assume that all pairwise differences of $t_i^{2a_i+1}$, $r_j^{b_j+1}$, and $s_k^{b_k+1}$ are contained in $R$. Additionally, by the hyperelliptic assumption, we have $t_i^2 \in R$ and $r_j + \zeta_js_j \in R$ for all $i$ and $j$, where $\zeta_j = e^{\pi {\rm i}/(b_j+1)}$ such that 
$(r_j + \zeta_js_j)^{b_j+1} = r_j^{b_j+1} - s_{j}^{b_j+1}$. 

Note that all even powers of $t_i$ and all odd powers of $t_i$ with exponents $\geq 2a_i+3$ are contained in $R$. Moreover, all powers of $r_j$ and $s_j$ with exponents $\geq b_j+2$ as well as $r_j^c + (\zeta_js_j)^c$ for all exponents $c$ are contained in $R$. It follows that the sum $|\alpha|$ of the entries in the gap sequence $[\alpha_1, \alpha_2, \ldots]$ 
of the singularity is bounded above by 
$$ \sum_{i=1}^m a_i + \sum_{j=1}^n b_j + 1 = g,$$
where we used the fact that $\sum_{i=1}^m 2a_i + \sum_{j=1}^n 2b_j = 2g-2$ and the last one is from $\alpha_{\max\{2a_i+1, b_j+1\}} = 1$ by (G5). Since $|\alpha|$ has to be $g$, 
the above functions thus generate $R$, which determines the corresponding singularity uniquely. 
\end{proof}

\begin{remark}
\label{rem:(2,1,1)}
 Theorem~\ref{thm:hyp-nonvarying} implies that $\bbP\calH(\mu)^{\hyp}\subset \bbP{\rm Def}^{-}_s (Y, q)$, where $(Y,q)$ is the unique singularity class in $S(\mu)^{\hyp}$. However, $\bbP{\rm Def}^{-}_s (Y, q)$ may contain non-hyperelliptic differentials, i.e., the preceding containment can be strict in general. For example, $\bbP\calH(2,1,1)^{\hyp}$ is a proper subspace of $\bbP\calH(2,1,1)$, where the latter is a nonvarying stratum and equal to $\bbP{\rm Def}^{-}_s (Y, q)$ by Theorem~\ref{thm:nonvarying}. 

Additionally, up to a finite choice of ordering the marked zeros, the hyperelliptic locus $\bbP\calH(\mu)^{\hyp}$ can be identified with $\calM_{0,2g+2+n}$ by marking the images of the Weierstrass points and the conjugate pairs. Note that $\calM_{0,2g+2+n}$ can be interpreted as the configuration space of points in the real plane and hence is $K(\pi, 1)$. If we know $\bbP\calH(\mu)^{\hyp} = \bbP{\rm Def}^{-}_s (Y, q)$, then it would imply that the locus of smooth deformations in $\bbP{\rm Def}^{-} (Y, q)$, viewed as the complement of the discriminant locus of singular deformations, is also $K(\pi, 1)$. 
\end{remark}

It is thus a meaningful question to determine when $\bbP\calH(\mu)^{\hyp}= \bbP{\rm Def}^{-}_s (Y, q)$. We will provide a criterion as follows. Denote by $w_i$ the zero of order $2a_i$ and $u_j, v_j$ the 
pair of zeros of order $b_j$. Let $C$ be a smooth deformation contained in $\bbP{\rm Def}^{-}_s (Y, q)$. Recall that the divisor at infinity is the $\bbQ$-Cartier divisor 
$$C_{\infty} = \frac{1}{\ell}\left(\sum_{i=1}^m (2a_i+1)w_i + \sum_{j=1}^n (b_j+1)(u_i + v_i)\right),$$
where $\ell = \lcm \{2a_1+1, \ldots, 2a_m+1, b_1 + 1, \ldots, b_n + 1 \}$. 

\begin{proposition}
\label{prop:hyp=def}    
Given $\mu = (2a_1, \ldots, 2a_m, b_1, b_1, \ldots, b_n, b_n)$, suppose there exists a value of $k$ such that $\lfloor kC_{\infty}\rfloor$ as a linear combination of $w_i$ and $u_j, v_j$ has an even coefficient for each $w_i$, has the same coefficients for each pair $u_j, v_j$, and satisfies $0 < \deg \lfloor kC_{\infty}\rfloor < 2g-2$. Then $\bbP\calH(\mu)^{\hyp}= \bbP{\rm Def}^{-}_s (Y, q)$, where $(Y,q)$ is the unique singularity class in $S(\mu)^{\hyp}$ (modulo the ordering of the marked zeros).
\end{proposition}

\begin{proof}
By Clifford's theorem, if there is a divisor $D$ on $C$ such that $0 <\deg D < 2g-2$ and $h^0(C, D) = \frac{1}{2}\deg D + 1$, then $C$ is hyperelliptic, and moreover, $D$ is linearly equivalent to an effective linear combination of Weierstrass points and conjugacy pairs, where the coefficients of the Weierstrass points are even. 

As discussed in Section~\ref{sec:test}, note that 
$h^0(C, \lfloor \lambda C_{\infty}\rfloor)$ is the same for all deformations in $\bbP{\rm Def}^{-}_s (Y, q)$, including those hyperelliptic deformations in the subspace $\bbP\calH(\mu)^{\hyp}$. Therefore, taking $D = \lfloor kC_{\infty}\rfloor$ as in the assumption gives a divisor that satisfies the hyperelliptic Clifford's bound.  
\end{proof}

\begin{example}
Suppose $2a_i+1$ and $b_j+1$ are divisible by an integer $d \geq 4$ for all $i$ and $j$. Let $k = 2\ell / d$. Then 
$$k C_\infty = \sum_{i=1}^m \frac{2(2a_i+1)}{d} w_i + \sum_{j=1}^n \frac{2(b_j+1)}{d} (u_i + v_i) $$
is a Cartier divisor. Since $d\geq 4$, we have the even coefficient of $w_i$ equal to $2(2a_i+1)/d < 2a_i$, and hence $k C_\infty$ is strictly less effective than the canonical divisor $\sum_{i=1}^m 2a_i w_i + \sum_{j=1}^n b_j (u_i + v_i)$, which implies that $\deg kC_{\infty} < 2g-2$. Therefore, we can apply Proposition~\ref{prop:hyp=def} to conclude that in this case all deformations in $\bbP{\rm Def}^{-}_s (Y, q)$ are hyperelliptic. 
\end{example}

\begin{example}
Without loss of generality, we reorder the zeros such that $a_1 \geq \cdots \geq a_m $. Moreover, suppose $a_1\geq 2$ and $2a_1 + 1 > 2(2a_i+1)$ for all $a_i\neq a_1$. Let $k = 2\ell / (2a_1+1)$. Then we have 
$$\lfloor k C_{\infty}\rfloor = \sum_{i=1}^{m} \left\lfloor\frac{2(2a_i+1)}{2a_1+1}\right\rfloor w_i + \sum_{j=1}^n \left\lfloor\frac{2(b_j+1)}{2a_1+1}\right\rfloor (u_j + v_j). $$
Note that the coefficient of $w_i$ is $2$ if $a_i = a_1$ and is $0$ if $a_i\neq a_1$. Since $2a_1 > 2$, the divisor $\lfloor k C_{\infty}\rfloor$ is strictly less effective than the corresponding canonical divisor. Therefore, we can apply Proposition~\ref{prop:hyp=def} to conclude that in this case all deformations in $\bbP{\rm Def}^{-}_s (Y, q)$ are hyperelliptic. 
\end{example}

\section{Singularities with bounded $\alpha$-invariants}
\label{sec:3/8}

Suppose a singularity $(Y,q)\in S(\mu)$ has $\alpha$-invariant $\geq 3/8$ (without dangling branches); see \cite[Problem 3.15]{AFS16} for the significance of this critical value. The classification of such singularities is described in Theorem~\ref{thm:3/8}. The remaining section is devoted to prove this theorem. 

We first reduce the problem to an inequality about $\chi_1^{\log}$. By the relation~\eqref{eq:alpha}, the desired bound $\alpha\geq 3/8$ is equivalent to $\chi_2^{\log} / \chi_1^{\log} \leq 5$. Recall that for $\mu = (m_1, \ldots, m_n)$, we have defined $\ell = \lcm (m_1+1, \ldots, m_n+1)$, $a_i = \ell / (m_i + 1)$, and 
$l_{\lambda, i} = \lceil \lambda / a_i \rceil $ for $1\leq \lambda \leq \ell$. Denote by $D_{\lambda}$ the following divisor class: 
\begin{align*}
 D_{\lambda} & \coloneqq \omega_C^{\log} - \sum_{i=1}^{n} l_{\lambda, i}p_i  = \sum_{i=1}^n (m_i + 1 - l_{\lambda, i}) p_i.  
 \end{align*}
By Proposition~\ref{prop:weight} (iv), the inequality $\chi_2^{\log} / \chi_1^{\log} \leq 5$ is further equivalent to 
\begin{align}
\label{eq:3/8-inequality}
\chi_1^{\log} = \sum_{\lambda=1}^{\ell} h^0(C, D_\lambda) \geq \frac{1}{4}(2g-2+n) \ell. 
\end{align}
We will also use the following identities: 
 \begin{align}
 \label{eq:sum-ell}
\sum_{\lambda=1}^{\ell} \ell_{\lambda, i}  = \frac{1}{2} (m_i + 2)\ell \quad {\rm and}\quad \sum_{\lambda=1}^{\ell}\sum_{i=1}^n \ell_{\lambda, i}  = (g-1+n) \ell. 
\end{align}
 
 By Clifford's theorem, we have 
 $$h^0(C, D_{\lambda}) \leq 1 + \frac{1}{2}\deg D_{\lambda}. $$ 
 Combining it with~\eqref{eq:sum-ell} implies that 
\begin{align}
\label{eq:clifford}
 \chi_1^{\log} & \leq \sum_{\lambda=1}^{\ell} \left( 1 + \frac{1}{2}\left(2g-2+n - \sum_{i=1}^n l_{\lambda, i} \right)  \right) = \frac{1}{2}(g+1)\ell. 
 \end{align}
 Comparing~\eqref{eq:clifford} with \eqref{eq:3/8-inequality}, we conclude that $n \leq 4$. 
 
In particular, this already settles the case for genus one.  From now on, we assume that $g\geq 2$. 

Additionally, since adding an ordinary marked point (i.e., a zero of order $0$) does not change the value of $\chi_1^{\log}$, we can first classify the cases where every entry in $\mu$ is positive, and then add zero entries into $\mu$ up to the bound given by  \eqref{eq:3/8-inequality}. 

In what follows, we carry out this procedure case by case, depending on the possible values of the nonzero entries in $\mu$ as well as the underlying curves being hyperelliptic or not.  

\subsection{The case $n = 4$}
\label{subsec:n=4}
 
In this case, the inequality in \eqref{eq:clifford} is an equality. In particular, Clifford's bound 
$h^0(C, D_\lambda) \leq 1 + \frac{1}{2}\deg D_\lambda$ attains equality for all $\lambda = 1, \ldots, \ell$. 

Up to reordering the zeros, we assume that $m_1 \geq \cdots \geq m_n$. Then $a_1 \leq \cdots \leq a_n$, where $a_i = \ell / (m_i+1)$. Note that 
$D_\lambda = K_C$ if and only if  $1\leq \lambda \leq a_1$ and $D_\lambda = \calO_C$ if and only if $\ell - a_1 +1 \leq \lambda \leq \ell$. 

First, consider the case $a_1 = \ell - a_1$, i.e., $m_1 = 1$. In this case, $\mu = (1,1,1,1)$, $\chi_1^{\log}(1,1,1,1) = 4$, and \eqref{eq:3/8-inequality} attains equality.  

Next, consider the case $m_1 \geq 2$. Then Clifford's bound holds for some $D_\lambda$ where $0 < \deg D_{\lambda} < 2g-2$, and hence the underlying curve $C$ is hyperelliptic. If a zero $z_i$ is a Weierstrass point, the divisor class $D_{a_i+1}$ is of the form $K_C - z_i - E_i$, where $E_i$ is an effective divisor not containing $z_i$. Then $D_{a_i+1}$ does not achieve the maximum of Clifford's bound, leading to a contradiction. Therefore, the remaining possibility is that the four zeros form two hyperelliptic conjugacy pairs, where $m_1 = m_2 = a$ and $m_3 = m_4 = b$ (up to reordering). In this case, \eqref{eq:3/8-inequality} attains equality.  

\subsection{The hyperelliptic case with $n \leq 3$}
\label{subsec:hyp}

In this subsection, we assume that the underlying curve is hyperelliptic. 

For $n = 3$,  consider first the case when one of the zeros, say, $z_1$, is a Weierstrass point, and the other two zeros form a hyperelliptic conjugacy pair. Note that $l_{\lambda, 1} = \lceil \lambda / a_1\rceil$ varies from $1$ to $m_1+1$ for $\lambda = 1, \ldots, \ell$, where each value appears $a_1$ times and $m_1+1$ is odd.  Therefore, the number of even values of $l_{\lambda, 1}$, denoted by $N_1^+$,  is equal to $(\ell - a_1) /2$. It follows that 
$$\chi_1^{\log} = \frac{1}{2}(g+1)\ell - \frac{1}{2}N_1^+ $$
and the inequality in~\eqref{eq:3/8-inequality} is equivalent to $N_1^+ = (\ell - a_1) /2\leq \ell / 2$, which holds true. If there is an additional ordinary marked point, we need 
$\frac{1}{2}(g+1)\ell - \frac{1}{2}N_1^+ \geq \frac{1}{2}(g+1)\ell$. It implies that $a_1 = \ell = (m_1+1)a_1$, which is impossible since we have assumed that $m_1 > 0$. 

Next, consider the case when all the three zeros are Weierstrass points. In particular, the zero orders $m_i$ are even and $m_i + 1 \geq 3$ for all $i$. Following the above notation and argument, we have 
$$\chi_1^{\log} = \frac{1}{2}(g+1)\ell - \frac{1}{2}(N_1^+ + N_2^+ + N_3^+) $$
and the inequality in~\eqref{eq:3/8-inequality} is equivalent to $N_1^+ + N_2^+ + N_3^+= (3\ell - a_1-a_2-a_3) /2\leq \ell / 2$.
Since $a_i = \ell / (m_i + 1) \leq \ell / 3$, this is impossible. 

For $n = 2$, consider first the case where both zeros are Weierstrass points, which implies that $m_1$ and $m_2$ are both even. By the same reasoning as above, we have 
$$\chi_1^{\log} = \frac{1}{2}(g+1)\ell - \frac{1}{2}N_1^+ - \frac{1}{2} N_2^+$$ 
and  the inequality in~\eqref{eq:3/8-inequality} is equivalent to $N_1^+ + N_2^+ \leq \ell$, which holds true. If there is an additional ordinary marked point, we need $\frac{1}{2}(g+1)\ell - \frac{1}{2}N_1^+ - \frac{1}{2} N_2^+ \geq \frac{1}{4}(2g+1)\ell$. It implies that $N_1^+ + N_2^+\leq \ell / 2$, i.e., $1/(m_1+1) + 1/(m_2+1) \geq 1$, which is impossible since $m_1$ and $m_2$ are positive even numbers. 

Next, consider $\mu = (g-1, g-1)^{\hyp}$, where the two zeros form a hyperelliptic conjugacy pair. This was already computed in Section~\ref{subsec:A2g+1}, and~\eqref{eq:3/8-inequality} attains equality. Therefore, no ordinary marked points can be added further.  

For $n = 1$, the unique zero is a Weierstrass point. In this case, $\chi_1^{\log} = g^2$, and the inequality in \eqref{eq:3/8-inequality} holds for $\mu = (2g-2)^{\hyp}$, $(2g-2, 0)^{\hyp}$, and $(2g-2, 0, 0)^{\hyp}$, where the ordinary marked points can be any points in $C$. 
 
From now on, it suffices to consider the nonhyperelliptic case with $n\leq 3$. 
 
\subsection{The nonhyperelliptic case with $n = 3$}
\label{subsec:nonhyp-3}
 
We first make the following observation that will be used frequently for all nonhyperelliptic cases. 
If the underlying curve $C$ is nonhyperelliptic, then for any divisor class $D$ with $0 < \deg D < 2g-2$, we have 
\begin{align}
\label{eq:Clifford-nonhyp}
 h^0(C, D)\leq \begin{cases}  \frac{1}{2}(\deg D + 1) & {\rm if}\ \deg D\ {\rm is\ odd}, \\
  \frac{1}{2}\deg D & {\rm if}\ \deg D\ {\rm is\ even}.
 \end{cases}
 \end{align}
For convenience, we call~\eqref{eq:Clifford-nonhyp} the nonhyperelliptic Clifford's bound. 

Additionally, let $N^+$ be the number of even values of $\sum_{i=1}^n (l_{\lambda, i} - 1)$ for $\lambda = a_1+1, \ldots, \ell-a_1$. Here we exclude the ranges where $1\leq \lambda \leq a_1$ and $\ell - a_1+1\leq \lambda \leq \ell$, because the corresponding $D_\lambda$ is $K_C$ and $\calO_C$, respectively, which achieves the maximum of the hyperelliptic Clifford's bound. With that said, by using~\eqref{eq:Clifford-nonhyp}, we can strengthen the inequality~\eqref{eq:clifford} as 
\begin{align}
\label{eq:3/8-nonhyp}
 \chi_1^{\log} & \leq \sum_{\lambda = 1}^{\ell} \frac{1}{2} \left( 2g-1+n - \sum_{i=1}^n l_{\lambda, i}  \right) - \frac{1}{2}N^+ + a_1 \\  \nonumber 
 & = \frac{1}{2} (g\ell - N^+) + a_1. 
  \end{align}
 
For the case $n = 3$, we assume that $m_1\geq m_2 \geq m_3$ as before, and then $a_1 \leq a_2 \leq a_3$.  Comparing~\eqref{eq:3/8-nonhyp} with \eqref{eq:3/8-inequality}, it follows that $2N^+ + \ell \leq 4a_1$, and hence $m_1 \leq 3$. It is straightforward to check the remaining possibilities $\mu = (3,3,2)$, $(3,2,1)$, $(2,2,2)$, and $(2,1,1)$ explicitly. Only $\mu = (2,1,1)$ and $(2,2,2)^{\even}$ work, whose $\alpha$-invariants have been computed in Sections~\ref{subsec:(2,1,1)} and~\ref{subsec:(2,2,2)-even}, respectively. Additionally, if we add an ordinary marked point to them, by Section~\ref{subsec:ordinary}, it would make their $\alpha$-invariants less than $3/8$. 
   
\subsection{The nonhyperelliptic case with $n = 2$}
\label{subsec:nonhyp-2}
  
 In this case, arguing as above, the condition $N^+ \leq 2a_1$ needs to hold. 
 
 Consider first the case $(g-1, g-1)$, where $\ell = g$ and 
 $a_1 = a_2 = 1$. Then $N^+ = g-2\leq 2$ and $g\leq 4$. The cases are $\mu = (2,2)^{\odd}$ and $(3,3)^{\nonhyp}$, 
 whose $\alpha$-invariants have been computed in Sections~\ref{subsec:(2,2)} and~\ref{subsec:(3,3)}, respectively. In both cases, no ordinary marked points can be added to them. 
 
Next, consider the case $m_1 > m_2$. Write $m_1 + 1 = dk_1$ and $m_2 + 1 = d k_2$, where 
$\gcd (k_1, k_2) = 1$, $k_1 > k_2$, $a_1 = k_2 < a_2 = k_1$, $\ell = d k_1 k_2$, and $d(k_1+k_2) = 2g$ is even. 
In this case, $N^+$ is the number of even values of $\lceil \lambda / k_2 \rceil + \lceil \lambda / k_1 \rceil$ for $k_2 + 1\leq \lambda \leq dk_1 k_2 - k_2$. Since $k_1$ and $k_2$ are relatively prime, we can evaluate $N^+$ by using the following observation: 
\begin{itemize}
\item If $k_1$ and $k_2$ are both odd, then there are $ (k_1k_2 + 1) / 2$ even values of $\lceil \lambda / k_2 \rceil + \lceil \lambda / k_1 \rceil$ for $1\leq \lambda \leq k_1k_2$. 
\item If exactly one of $k_1$ and $k_2$ is odd, then there are $k_1k_2 / 2$ even values of $\lceil \lambda / k_2 \rceil + \lceil \lambda / k_1 \rceil$ for $1\leq \lambda \leq k_1k_2$. 
\end{itemize}
The first statement follows from the Chinese remainder theorem, and the other can be verified by paring $\lambda$ with $k_1 k_2 + 1 - \lambda$. As a consequence, we obtain that 
$$2k_2 \geq N^+ \geq \frac{d}{2} k_1 k_2 - 2k_2, $$
which implies that $dk_1\leq 8$ and $d\leq 4$ since $k_1 > k_2\geq 1$. 
 
If $d = 4$, then $k_2 = 1$ and $k_1 = 2$, which is the case $\mu = (7,3)$ and the nonhyperelliptic Clifford's bound in~\eqref{eq:Clifford-nonhyp} attains its maximum for all divisor classes $D_\lambda$ that appear in the range $0 < \deg D_{\lambda} < 2g-2$. This is equivalent to $h^0(C, 2p_1 + p_2) = 2$ (and $C$ is not hyperelliptic by the assumption). Using the method in Section~\ref{sec:nonvarying}, we find that $\fkm/\fkm^2 \pmod{\wfkm^9}$ is generated by $x = t_1^2 \oplus t_2$ and $y = t_1^5 \oplus 0$, and the defining equation of the singularity is $y (y^2 - x^5)$. Alternatively, the condition $h^0(C, 2p_1 + p_2) = 2$ implies that $h^0(C, 5p_1 + p_2) = 3$. We can take a plane sextic defined by $y^3 f(x, y) - x^5$, where $f(x,y)$ is a general polynomial of degree three. The line $y = 0$ cuts out a divisor of type $5p_1 + p_2$, where $p_1$ is at the origin and $p_2$ is at infinity.  Moreover, the projection from $p_1$ induces a $g^1_3$ with a fiber given by $2p_1 + p_2$. Then the normalization of this plane sextic gives an element in $\bbP\calH(7,3)$ which further satisfies $h^0(C, 2p_1 + p_2) = 2$. 
 
If $d = 3$, then $1\leq k_2 < k_1 \leq 2$ and $k_1 + k_2$ is even, which is impossible. 
 
If $d = 2$, then $1\leq k_2 < k_1 \leq 4$. First, consider the case when $k_1$ and $k_2$ are both odd. The only possible case is $k_1 = 3$ and $k_2 = 1$, i.e., $\mu = (5,1)$, whose $\alpha$-invariant is exactly $3/8$ as we have computed in Section~\ref{subsec:(5,1)}. In this case, no additional ordinary marked points can be added. 
 
Next, consider the case where exactly one of $k_1$ and $k_2$ is odd. The possible cases are $(k_1, k_2) = (4,3)$, $(4,1)$, $(3,2)$, and $(2,1)$. Consequently, $\mu = (7,5)$, $(7,1)$, $(5,3)$, and $(3,1)$. We have computed that $\mu = (5,3)$ does not satisfy the desired bound (Section~\ref{subsec:(5,3)}), while $\mu = (3,1)$ does (Section~\ref{subsec:(3,1)}), which also allows to add at most one ordinary marked point. 

For $\mu = (7,5)$, every $D_\lambda$ that appears needs to satisfy the nonhyperelliptic Clifford's bound. This implies that $h^0(C, 2p_1 + p_2) = 2$ and $h^0(C, 3p_1 + 2p_2) = 3$. However, the $g^2_5$ given by $|3p_1 + 2p_2|$ realizes $C$ as a plane quintic of arithmetic genus $6$, which is smaller than the geometric genus $7$, leading to a contradiction.  

For $\mu = (7,1)$, similarly, every $D_\lambda$ that appears needs to satisfy the nonhyperelliptic Clifford's bound. This implies that $h^0(C, 3p_1) = 2$ (which is equivalent to its canonical residual $h^0(C, 4p_1 + p_2) = 3$). One can realize this case by using the following model. Take an irreducible nodal plane quintic $C'$ such that  the tangent line to one branch of the node cuts out the divisor $4p_1 + p_2$ in the normalization $C$ of $C'$. By the adjunction formula, $K_{C'}\sim (K_{\bbP^2} + C')|_{C'} = \calO_{\bbP^2}(2)|_{C'} \sim \calO_{C'} (8p_1 + 2p_2)$. Since $K_{C'}$ pulls back to $K_{C}(p_1 + p_2)$, it follows that $K_{C} \sim \calO_{C}(7p_1 + p_2)$. We can also describe the singularity by using the method in Section~\ref{sec:nonvarying}, where $\fkm/\fkm^2 \pmod{\wfkm^9}$ is generated by 
$x = t_1^4 \oplus t_2$ and $y = t_1^3 \oplus 0$, and the defining equation of the singularity is $y (y^4 - x^3)$. In this case, the $\alpha$-invariant is exactly $3/8$, and no additional ordinary marked points can be added. 
 
 If $ d= 1$, then $k_1$ and $k_2$ are both odd. By the first case of the preceding observation, we need 
 $$2k_2 \geq N^{+} = \frac{1}{2} (k_1k_2 + 1) - 2k_2. $$
The possible cases are $\mu = (6, 4)$, $(6, 2)$, and $(4,2)$ (where the underlying curves are not hyperelliptic as in the assumption).  We have seen that $(4,2)^{\odd}$ (Section~\ref{subsec:(4,2)}) and $(6,2)^{\odd}$ (Section~\ref{subsec:(6,2)}) do not satisfy the bound, while $(4,2)^{\even}$ does but no ordinary marked points can be added.  The remaining cases are 
$(6,2)^{\even}$, $(6,4)^{\odd}$, and $(6,4)^{\even}$. 

For $\mu = (6,2)^{\even}$, by the even parity, $h^0(C, 3p_1 + p_2) = 2$ holds and attains the maximum of the nonhyperelliptic Clifford's bound. In this case, $a_1 - N^{+} / 2 = 1/2 < 1$. Therefore, every $D_\lambda$ that appears need to satisfy the nonhyperelliptic Clifford's bound. In particular, we have $h^0(C, 2p_1 + p_2) = 2$ and $h^0(4p_1 + p_2) = 3$. Then the $g^2_5$ given by $|4p_1 + p_2|$ realizes $C$ as a plane quintic with an ordinary cusp at the image of $p_1$ by the genus reason. However, the cuspidal tangent line cuts out $4p_1 + p_2$, where the multiplicity at $p_1$ is bigger than $3$, leading to a contradiction.  Alternatively, we can apply the method in Section~\ref{sec:nonvarying} to show the nonexistence of such singularities. 

For $\mu = (6,4)$, we have $a_1 - N^{+} / 2 = 1$. Therefore, there can be at most one $D_\lambda$ that does not satisfy the 
nonhyperelliptic Clifford's bound. In particular, since $2p_1 + p_2$ and $4p_1 + 2p_2$ both appear and are canonical residual to each other, we have $h^0(C, 2p_1 + p_2) = 2$ and $h^0(C, 4p_1 + 2p_2) = 3$.  

For $\mu = (6,4)^{\odd}$, by the odd parity, we further have $h^0(C, 3p_1 + 2p_2) = 3$. (Otherwise it would be $1$ and we would lose $2$ from the nonhyperelliptic Clifford's bound.) The $g^2_5$ given by $|3p_1 + 2p_2|$ is an embedding of $C$ into $\bbP^2$ as a plane quintic. The projection from $p_1$ induces a base point free $g^1_4$ with a fiber given by $2p_1 + 2p_2$. However, $h^0(2p_1 + p_2) = 2$ implies that $p_2$ should be a base point of this $g^1_4$, leading to a contradiction. 

For $\mu = (6,4)^{\even}$, we have $h^0(C, 3p_1 + 2p_2) = 2$, which is $1$ smaller than the nonhyperelliptic Clifford's bound. Since this divisor class appears multiple times in the expression of $D_\lambda$ for $16\leq \lambda \leq 20$, we can check that $\chi_1^{\log}$ violates the inequality~\eqref{eq:3/8-inequality}. 
    
\subsection{The nonhyperelliptic case with $n = 1$}
\label{subsec:n=1}

This is the case of monomial singularities for $\mu = (2g-2)$ as studied in Section~\ref{subsec:monomial}. Recall that 
$$G_p = \{ 1 = b_1 < \cdots < b_g = 2g-1\}$$ 
is the Weierstrass gap sequence of $p$, and $H_p = 
 \bbN \setminus G_p$ is the Weierstrass semigroup. We write 
 $$H_p = \{ 0 = k_1 < \cdots < k_g = 2g-2 < 2g < 2g+1 < 2g+2 < \cdots \}.$$ 
 In this case, we have seen that $ \chi_1^{\log} = \sum_{i=1}^g b_i$, and hence the inequality~\eqref{eq:3/8-inequality} is equivalent to 
$$ \sum_{i=1}^g b_i \geq \left\lceil \frac{1}{4}(2g-1)^2\right\rceil = g^2 - g +1, $$
which is further equivalent to 
\begin{align}
\label{eq:H-upper}
 \sum_{i=1}^g k_i \leq g^2 - 1. 
\end{align}
By the definition of $H_p$, for $i\geq 1$, we have $h^0(C, (k_i-1)p) = i-1$ and $h^0(C, k_i p) = i$. Since we have assumed that the underlying curve $C$ is not hyperelliptic, by Clifford's theorem, we have $h^0(C, k_i p) = i \leq (k_i+1)/2$, which implies that $k_i \geq 2i - 1$ for $ 2\leq i \leq g-1$. It follows that 
 \begin{align}
 \label{eq:H-lower}
  \sum_{i=1}^g k_i \geq \sum_{i=2}^{g-1} (2i-1) + (2g-2) = g^2 - 2. 
  \end{align}
Combining~\eqref{eq:H-upper} and~\eqref{eq:H-lower}, we conclude that 
$$\sum_{i=1}^g k_i = g^2-2\quad {\rm or} \quad g^2 - 1. $$

First, consider the case that $\sum_{i=1}^g k_i = g^2 - 2$. Then $H_p = \{ 0, 3, 5, \ldots, 2g-3, 2g-2, 2g, 2g+1, \ldots \}$. Since $H_p$ is a semigroup, it implies that $6 = 3+3 \geq 2g-2$, and hence $g\leq 4$. 

Next, consider the case that $\sum_{i=1}^g k_i = g^2 - 1$. Then exactly one of the elements of $H_p$ can jump by one compared to the previous case, and there are no other jumps.  

If $k_2 = 3\in H_p$ and if $6 \leq 2g-4$ (i.e., $g\geq 5$), then 
$H_p = \{0, 3, 6, 7, 9, \ldots, 2g-5, 2g-3, 2g-2, 2g, 2g+1, \ldots \}$. It implies that $10=3+7 \geq 2g-2$ and $g\leq 6$. In particular, for $g = 6$, the only possible case is $H_p = \{ 0, 3, 6, 7, 9, 10, 12, 13, \ldots\}$ generated by $3$ and $7$, where 
$G_p = \{1,2,4,5,8,11 \}$. In this case, the defining equation of the singularity is $y^3 - x^7$. We remark that it was verified in \cite[Table 1]{S23} that the locus of smooth pointed curves with this semigroup is non-empty. 

If $k_2 = 4\in H_p$, then $H_p = \{0, 4, 5, 7, \ldots, 2g-5, 2g-3, 2g-2, 2g, 2g+1, \ldots \}$. It implies that $8 = 4+4 \geq 2g-2$ 
and $g\leq 5$. 

Note that the minimal strata with $g\leq 5$ are all nonvarying. We can check their semigroups explicitly and conclude the desired claim.  

\section{Slopes of singularities and effective divisors}
\label{sec:slope}

Motivated by~\cite[(1.2) and Theorem 4.2]{AFS16}, given a singularity class $(Y, q) \in S(\mu)$ for a partition $\mu = (m_1, \ldots, m_n)$ of $2g-2$, we define the {\em slope} of $(Y, q)$ to be 
\begin{align}
\begin{split}
\label{eq:slope}
 s(Y, q) & \coloneqq \frac{13\chi_1(Y) - \chi_2(Y) }{\chi_1(Y)} \\
 & = 12 - \frac{(2g-2+n - \sum_{i=1}^n \frac{1}{m_i+1}) \ell}{\chi_1(Y)}, 
 \end{split}
\end{align} 
where the second equality follows from~\eqref{eq:log} and Proposition~\ref{prop:weight} (iv). We remark that the coefficient 
of $\ell$ in~\eqref{eq:slope} is the famous constant $\kappa_\mu$ (up to a scalar multiple of $1/12$) that appears in the relation 
of sums of Lyapunov exponents, area Siegel--Veech constants, and slopes; see~\cite[Theorem 1]{EKZ14} and \cite[Proposition 4.5]{CM12}. 

The above definition mimics the slope for a one-parameter family of stable curves $B$ in $\BM_g$, which is defined by $s(B) = \deg_{B} \delta / \deg_{B} \lambda_1$, as well as the slope for a divisor class $D = a\lambda_1 - b\delta$ in $\BM_g$, which is defined by $s(D) = a / b$; see~\cite{HM90}. 
Additionally, by Theorem~\ref{thm:filtration} (ii), smooth pointed curves $(C, p_1, \ldots, p_n) \in \bbP{\rm Def}^{-}_s(Y, q)$ have the same graded quotients of Rees filtrations, which determine $\chi_m(Y)$ completely for all $m$. Therefore, we also define the slope of the canonical divisor $\sum_{i=1}^n m_i p_i$ in $C$ to be $s(Y, q)$. 

\begin{example}[Slopes of the nonvarying strata]
\label{ex:nonvarying}
As shown in Section~\ref{sec:nonvarying}, each nonvarying stratum therein has a unique singularity class $(Y, q)$, whose $\bbP{\rm Def}^{-}_s(Y, q)$ covers the entire stratum. In this case, we can take $B$ to be a {\em Teichm\"uller curve} in the nonvarying stratum. The slope of $B$ thus yields the nonvarying slope for all Teichm\"uller curves in the stratum, which is further equal to $12 c_{\area}(B) / L(B)$, where $L$ is the sum of nonnegative Lyapunov exponents and $c_{\area}$ is the area Siegel--Veech constant; see~\cite[Proposition 4.5]{CM12}. 
\end{example}

\begin{example}[Slopes of the principal strata]
\label{ex:BN}
Consider $(Y, q)\in S(1^{2g-2})$ and $(C, p_1, \ldots, p_{2g-2})\in \bbP{\rm Def}^{-}_s(Y, q)\subset \bbP\calH(1^{2g-2})$. In this case, we have $\ell = 2$ and $a_i = 1$ for all $i$. In particular, 
$$ \chi_1 (Y) = h^0(C, K) + h^0(C, \calO) = g+1. $$
It follows that 
$$ s(Y,q) = 6 + \frac{12}{g+1}. $$
Notably, this equals the celebrated slope of the {\em Brill--Noether divisors}; see~\cite[Theorem (6.62)]{HM}.  Additionally, this slope value is independent of the choices of $(Y, q)\in S(1^{2g-2})$. 
\end{example}

\begin{example}[Generic slopes of the odd spin strata]
\label{ex:spin-odd}
Let $(Y, q)\in S(2^{g-1})^{\odd}$ and $(C, p_1, \ldots, p_{g-1})\in \bbP{\rm Def}^{-}_s(Y, q)\subset \bbP\calH(2^{g-1})^{\odd}$, chosen generically. In this case, we have $\ell = 3$ and $a_i = 1$ for all $i$. In particular, 
$$ \chi_1 (Y) = h^0(C, K) + h^0 (C, p_1 + \cdots + p_{g-1}) + h^0(C, \calO) = g+2, $$
where we used the generic assumption for $h^0 (C, p_1 + \cdots + p_{g-1}) = 1$. It follows that 
$$ s(Y,q) =  4 + \frac{24}{g+2}.  $$ 
In this case, we can apply the method in Section~\ref{subsec:(2,2,2)-odd} to describe the generators of $\mathfrak{m}/\mathfrak{m}^2 \pmod{\tilde{\mathfrak{m}}^4}$ for the singularity $(Y, q)$ as follows: 
\begin{align*}
x_i & = 0\oplus \cdots \oplus 0\oplus t_i^2\oplus 0 \oplus  \cdots \oplus 0, \quad i = 1, \ldots, g-1, \\
y_j & = 0 \oplus \cdots \oplus 0 \oplus t_j^3 \oplus 0 \oplus \cdots \oplus 0 \oplus t_{g-1}^3, \quad j = 1, \ldots, g-2. 
\end{align*}
We remark that if $(Y, q)\in S(2^{g-1})^{\odd}$  is from a special choice such that $h^0(C, p_1 + \cdots + p_{g-1}) \geq 3$, then the value of $s(Y, q)$ and the structure of the singularity can be different from the above generic case. 
\end{example}

\begin{example}[Generic slopes of the strata of Weierstrass points]
\label{ex:weierstrass}
Let $(Y, q)\in S(g, 1^{g-2})$ and $(C, p_1, \ldots, p_{g-1})\in \bbP{\rm Def}^{-}_s(Y, q)\subset \bbP\calH(g, 1^{g-2})$, chosen generically. In other words, the zero $p_1$ of order $g$ is a {\em normal} Weierstrass point, and $p_2+ \cdots + p_{g-1}$ is the canonical residual divisor of $gp_1$. 

We demonstrate for the case when $g$ is odd.  In this case, we have $\ell = g+1$, $a_1 = 1$, and $a_i = (g+1)/2$ for $2\leq i \leq g-1$. In particular, 
\begin{align*}
\chi_1 (Y) & = \sum_{\lambda = 1}^{\ell} h^0\left(C, \omega_C^{\log} - \sum_{i=1}^{g-1} \left\lceil \frac{\lambda}{a_i} \right\rceil p_i\right) \\
& =  \sum_{\lambda = 1}^{(g+1)/2} h^0(C, K - (\lambda-1) p_1) + \sum_{\lambda = (g+3)/2}^{g+1} h^0(C,  (g - \lambda+1) p_1) \\
& =  \sum_{\lambda = 1}^{(g+1)/2} (g - \lambda + 1) + \frac{1}{2}(g+1) = \frac{1}{8}(g+1)(3g+5), 
\end{align*}
where we used the generic assumption on $p_1$ in the last step. It follows that 
$$ s(Y,q) =  12 - \frac{4(5g+6)(g-1)}{(3g+5)(g+1)} \sim \frac{16}{3} + O(1/g).  $$ 

The case when $g$ is even can be analyzed similarly; we leave the details to the interested reader.  
\end{example}

It is a long standing question to determine the {\em lower bound} for the slopes of effective divisors in $\BM_g$. Harris--Morrison conjectured that the Brill--Noether divisors have the lowest slope given by $6 + 12/(g+1)$; see~\cite[Conjecture (6.63)]{HM}. Although starting from $ g = 10$ a series of counterexamples have been discovered, their slopes are still close to the Brill--Noether slope; see~\cite{T98, FP05, F09}. In particular, no effective divisors with slope $\leq 6$ have been found. Conversely, one can construct {\em moving curves} in $\BM_g$ to bound the slopes of effective divisors by using their intersection numbers, where a moving curve passes through a general element in $\BM_g$. Nevertheless, all known lower bounds constructed this way tend to zero as $g$ approaches infinity; see~\cite{F08, C11, P12}.  We refer to~\cite{CFM13} for a survey about this fantastic question.     
Under the same assumption as in \cite[Theorem 4.2]{AFS16}, we will illustrate an amusing approach to bound the slopes of effective divisors in $\BM_g$ by using the slopes of $(Y, q)\in S(\mu)$ defined above. The following statement is an elaboration of Theorem~\ref{thm:slope-main}. 

\begin{theorem}
\label{thm:slope}
Let $\mu = (m_1, \ldots, m_n)$ be a partition of $2g-2$ with $n \geq g-1$ such that $\mu \neq (2^{g-1})^{\even}$. Let $(C, p_1, \ldots, p_n)\in \bbP\calH(\mu)$ be a general element 
and let $(Y, q) \in S(\mu)$ be the corresponding singularity class. 
Suppose the discriminant locus of singular curves in ${\rm Def}^{-}(Y, q)$ is a $\bbQ$-Cartier divisor and the locus of worse-than-nodal deformations of $(Y, q)$ has codimension at least $2$. Then $s(Y, q)$ is a lower bound for the slopes of effective divisors in $\BM_g$.  
\end{theorem}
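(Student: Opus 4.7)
The plan is to realize $s(Y,q)$ as the slope of a moving curve in $\BMg$ built from a generic one-parameter family of deformations of $(Y,q)$, and then invoke the standard moving-curve bound on effective divisors. Concretely, I would take a generic $\bbG_m$-orbit closure $B \subset {\rm Def}^{-}(Y,q)$ passing through the origin, using the $\bbG_m$-action arising from the graded structure on the base of the negatively graded versal deformation space. After a suitable finite base change, $B \cong \bbP^1$ carries a tautological family $\pi\colon \calX \to B$ whose generic fiber is a smooth stable curve $(C, p_1, \ldots, p_n) \in \bbP\calH(\mu)_{(Y,q)}$ and whose central fiber over the origin of $B$ is the projectivization of $Y$. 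By Theorem~\ref{thm:filtration}~(iv) combined with the hypothesis $n \geq g-1$ and the exclusion $\mu \neq (2^{g-1})^{\even}$, the forgetful map $\bbP\calH(\mu) \to \calM_g$ is dominant, so letting $(Y,q)$ vary inside $S(\mu)$ and $B$ vary inside $\bbP{\rm Def}^{-}(Y,q)$ produces a family of such one-parameter tests that sweeps out a Zariski dense open of $\BMg$; the resulting modular morphism $f\colon B \to \BMg$ therefore represents a moving curve.

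Next, I would compute the slope of $f(B)$ via a character-theoretic argument in the spirit of \cite[Theorem~4.2]{AFS16}. Since the relative dualizing sheaf $\omega_\pi$ is naturally $\bbG_m$-linearized, equivariant localization at the unique $\bbG_m$-fixed fiber identifies $\deg_B \pi_*\omega_\pi^{\otimes m}$ with the total $\bbG_m$-character $\chi_m(Y)$ on $H^0(Y,\omega_Y^{\otimes m})$, up to the normalization of weights on $B$. In particular $\deg_B f^*\lambda_1 = \chi_1(Y)$, while combining the classical Mumford relation $12\lambda_1 = \kappa_1 + \delta$ on $\BMg$ with Grothendieck--Riemann--Roch applied to $\pi_*\omega_\pi^{\otimes 2}$ gives $c_1(\pi_*\omega_\pi^{\otimes 2}) = 13\lambda_1 - \delta$, so that $\deg_B f^*\delta = 13\chi_1(Y) - \chi_2(Y)$. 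Dividing and comparing with~\eqref{eq:slope} yields $s(f(B)) = s(Y,q)$.

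Finally, the moving-curve principle forces any effective divisor class $D = a\lambda_1 - b\delta \in \BEff(\BMg)$ to satisfy $D \cdot f(B) \geq 0$ on a generic member of the moving family, which rearranges to $s(D) = a/b \geq s(Y,q)$. I expect the main obstacle to lie in the character identity $\deg_B f^*\delta = 13\chi_1(Y) - \chi_2(Y)$: the $\bbQ$-Cartier hypothesis on the discriminant of ${\rm Def}^{-}(Y,q)$ is what allows $f^*\delta$ to be defined as a $\bbQ$-divisor class in the first place, while the codimension-at-least-$2$ condition on worse-than-nodal deformations ensures that a generic $B$ meets the discriminant transversally at simple-nodal fibers away from the origin, so the boundary contribution matches the character calculation with no spurious corrections from fibers carrying higher singularities.
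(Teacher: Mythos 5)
Your overall strategy (realize $s(Y,q)$ as the slope of a moving curve coming from the versal deformation space, compute that slope via \cite[Theorem 4.2]{AFS16}, then intersect with effective divisors) is the same as the paper's, and your use of the dominance of $\bbP\calH(\mu)\to\calM_g$ for $n\geq g-1$, $\mu\neq(2^{g-1})^{\even}$ matches the paper's argument (though this dominance is an external input, cited in the paper to \cite[Proposition 4.1]{C10} and \cite[Theorem 5.7]{G18}, not a consequence of Theorem~\ref{thm:filtration}~(iv)). However, there is a genuine gap in your choice of the test curve $B$. You take $B$ to be a $\bbG_m$-orbit closure in ${\rm Def}^-(Y,q)$ through the origin, so that the central fiber is $Y$ itself. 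Such a family is the isotrivial test configuration: all fibers over the open orbit are isomorphic to a single smooth curve $C$, so the induced map $B\to\BM_g$ is constant on a dense open set and hence constant. Consequently $\deg_B f^*\lambda_1=\deg_B f^*\delta=0$; the slope of $f(B)$ is $0/0$, and $f(B)$ is not a moving curve (a point cannot sweep out $\BM_g$, and the family of all such points sweeping out a dense open does not yield the inequality $D\cdot f(B)\geq 0$ in any useful form, since each intersection number is trivially zero). Your localization step conflates the $\bbG_m$-character $\chi_m(Y)$ at the fixed fiber with the degree of $\pi_*\omega_\pi^{\otimes m}$ on the orbit closure; these agree only up to the contribution of the second fixed point of a compactified orbit, and for an isotrivial family the net degree vanishes.

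The paper instead takes $B$ to be a complete curve in the \emph{quotient} $\bbP{\rm Def}^-(Y,q)=[({\rm Def}^-(Y,q)\setminus 0)/\bbG_m]$, i.e., a curve transverse to the $\bbG_m$-orbits that never contains $[Y]$ as a fiber. Its fibers genuinely vary, its generic fiber is a general element of $\bbP\calH(\mu)_{(Y,q)}$, and it meets the discriminant divisor in finitely many nodal fibers; the hypotheses that the discriminant is $\bbQ$-Cartier and that worse-than-nodal deformations have codimension $\geq 2$ are exactly what let a general such $B$ be chosen with only nodal degenerate fibers and with $f^*\delta$ well defined. It is for \emph{this} $B$ that \cite[Theorem 4.2]{AFS16} gives $\deg_B\lambda_1\propto\chi_1(Y)$ and $\deg_B\delta\propto 13\chi_1(Y)-\chi_2(Y)$, hence $s(B)=s(Y,q)$ by \eqref{eq:slope}, and dominance of the stratum makes $B$ a moving curve. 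To repair your argument, replace the orbit closure by such a complete curve in the projectivized deformation space and invoke the AFS computation there rather than attempting equivariant localization on the test configuration.
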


\begin{proof}
By assumption, we can find a complete curve $B\to {\rm Def}^{-}(Y, q)$ whose image passes through the general element $(C, p_1, \ldots, p_n)$ in $\bbP\calH(\mu)$ and does not meet the higher codimensional locus of worse-than-nodal deformations. By using~\cite[Theorem 4.2]{AFS16} and~\eqref{eq:slope}, we know that $s(B) = s(Y, q)$.  Additionally, since $n\geq g-1$, we have $\dim \bbP\calH(\mu)\geq \dim \calM_g$. In this case, $\bbP\calH(\mu)\to \calM_g$ 
is known to be dominant (except for $\mu = (2^{g-1})^{\even}$ where the image has codimension $1$); see~\cite[Proposition 4.1]{C10} and \cite[Theorem 5.7]{G18}. Therefore, $B\to \BM_g$ is a moving curve, which implies the desired claim.  
\end{proof}

We can also utilize Theorem~\ref{thm:slope} backwards. For example, if the slope bound is known to be lower than $6 + 12/(g+1)$ for certain $g$, then the assumption on the discriminant locus in Theorem~\ref{thm:slope} cannot hold for $\mu  = (1^{2g-2})$ as computed in Example~\ref{ex:BN}, which would thus reveal that the geometry of ${\rm Def}^{-}(Y, q)$ in this case can be worse than expected.  

In summary, it would be very useful to figure out when the assumption about the discriminant locus of ${\rm Def}^{-}(Y, q)$ can actually hold, e.g., for the cases in Example~\ref{ex:spin-odd} and Example~\ref{ex:weierstrass} where the corresponding strata dominate $\calM_g$.

\section{Loci of Weierstrass points with fixed semigroups}
\label{sec:Weierstrass}

In this section, we will prove the two statements of Theorem~\ref{thm:Weierstrass} through Theorem~\ref{thm:(2g-2)-tauto} and Theorem~\ref{thm:(2g-2)-affine}, respectively.  

We first review the setup and some known facts. Given a semigroup $H$ with gap sequence $G = \{b_1, \ldots, b_g\}$, recall that $\calM_{g,1}^{H}\subset \calM_{g,1}$ is the locally closed locus of smooth pointed curves $(C, p)$, where the semigroup of $p$ is $H$. Eisenbud--Harris observed that $\calM_{g,1}^{H}$ does not contain any complete curves; see \cite[c) 6)]{EH87W}. The following relation of cycle classes on $\calM_{g,1}^{H}$ is a crucial step in their argument. Since this relation was only sketched in~\cite{EH87W}, for completeness and our applications, here we provide a detailed proof.  

\begin{lemma}
\label{le:class}
Let $f\colon \calC \to \calM_{g,1}^{H}$ be the universal curve. Then, the total Chern class of the Hodge bundle on $\calM_{g,1}^{H}$ satisfies that 
$$ c(f_{*}\omega_{f}) = \prod_{i=1}^g (1 + b_i \psi), $$
where $\omega_f$ is the relative dualizing bundle and 
 $\psi$ is the cotangent line class associated to the marked point. In particular, the first Chern class of the Hodge bundle on $\calM_{g,1}^{H}$ satisfies that 
$$ \lambda_1 = \left(\sum_{i=1}^g b_i\right) \psi. $$
\end{lemma}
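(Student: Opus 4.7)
The plan is to construct a filtration of the Hodge bundle $f_*\omega_f$ on $\calM_{g,1}^H$ by vanishing orders along the marked section, and to identify the successive quotients with powers of $\psi$. Let $\sigma \colon \calM_{g,1}^H \to \calC$ denote the marked section, and define the decreasing filtration
\[
f_*\omega_f = V_0 \supset V_1 \supset V_2 \supset \cdots \supset V_{2g-1} = 0,
\qquad V_k \coloneqq f_*\bigl(\omega_f(-k\sigma)\bigr),
\]
parameterizing sections with vanishing order at least $k$ at the marked point. By Riemann--Roch and the definition of the Weierstrass gap sequence, on a fiber $(C,p)$ with semigroup exactly $H$, the dimension of $V_k$ drops by $1$ precisely when $k \in G = \{b_1, \ldots, b_g\}$, and stays constant otherwise. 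This is the step where the defining condition of $\calM_{g,1}^H$ is used crucially.

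Next, I would argue that this pointwise fact upgrades to a filtration by subbundles on the whole of $\calM_{g,1}^H$. Semicontinuity guarantees that the dimension of $V_k$ can only jump up on closed subloci; since it is constant by the semigroup condition, each $V_k$ is locally free, and the inclusions $V_k \hookrightarrow V_{k-1}$ are inclusions of subbundles. Consequently the graded piece $V_{k-1}/V_k$ is a line bundle if $k \in G$ and zero otherwise.

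The crux of the argument is to identify $V_{b_i-1}/V_{b_i}$ with $\psi^{b_i}$. For this, use the short exact sequence
\[
0 \to \omega_f(-k\sigma) \to \omega_f(-(k-1)\sigma) \to \omega_f(-(k-1)\sigma)\big|_\sigma \to 0
\]
and push forward along $f$. The quotient sheaf is supported on $\sigma$, and its restriction is
\[
\omega_f(-(k-1)\sigma)\big|_\sigma \;\cong\; \sigma^*\omega_f \otimes \sigma^*\calO_{\calC}(-(k-1)\sigma) \;\cong\; \psi \otimes \psi^{k-1} \;=\; \psi^{k},
\]
using the standard identifications $\sigma^*\omega_f \cong \psi$ and $\sigma^*\calO_{\calC}(-\sigma) \cong N_{\sigma/\calC}^{\vee} \cong \psi$ in the smooth locus. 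The induced leading-coefficient map $V_{k-1}/V_k \hookrightarrow \psi^k$ is injective; since both are line bundles when $k = b_i$, it is an isomorphism there.

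Finally, the total Chern class is multiplicative in short exact sequences, so
\[
c(f_*\omega_f) \;=\; \prod_{i=1}^g c\bigl(V_{b_i-1}/V_{b_i}\bigr) \;=\; \prod_{i=1}^g (1 + b_i\psi),
\]
and extracting the first Chern class yields $\lambda_1 = \bigl(\sum_{i=1}^g b_i\bigr)\psi$. The main obstacle is justifying that the filtration has locally free graded pieces globally, which ultimately rests on the constancy of the semigroup on $\calM_{g,1}^H$; once this is in place, the identification of the graded pieces with powers of $\psi$ proceeds via a routine local computation in a parameter at the marked point.
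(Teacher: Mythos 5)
Your proposal is correct and follows essentially the same route as the paper: the filtration $f_*(\omega_f(-kP))$ of the Hodge bundle, the pushforward of the short exact sequence restricting to the marked section, and the identification of the rank-one graded pieces at $k=b_i$ with $\psi^{b_i}$. Your added justification that the constancy of the semigroup makes the filtration one by subbundles with locally free quotients is exactly the point the paper uses implicitly, so the two arguments coincide.
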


\begin{proof}
Let $P\subset \calC$ be the corresponding section of the marked point $p$. We have the short exact sequence:  
$$ 0 \to \omega_{f} (- b_i P) \to \omega_{f} (- (b_i-1) P) \to  \omega_{f} (- (b_i-1) P) \otimes \calO_P \to 0. $$
For every fiber curve $C$, we have 
$$h^0(C, K - b_i p) + 1 = h^0(C, K - (b_i-1) p)  = h^0(C, K - b_{i-1}p).$$
Therefore, applying $f_{*}$ to the above exact sequence, we conclude that 
\begin{align*}
f_{*} (\omega_{f} (- b_{i-1} P)) / f_{* } (\omega_{f} (- b_i P)) & \cong  f_{*} (\omega_{f} (- (b_i-1) P)) / f_{* } (\omega_{f} (- b_i P)) \\ 
& \cong  \calL^{b_i} 
\end{align*}
where $\calL$ is the bundle of the cotangent line associated to $p$ and $c_1(\calL) = \psi$. Using the following filtration of the Hodge bundle: 
$$ 0 \subset \cdots \subset f_{* } (\omega_{f} (- b_i P)) \subset f_{*} (\omega_{f} (- b_{i-1} P)) \subset \cdots \subset f_{*}\omega_{f}, $$
the desired claim thus follows.  
\end{proof}

\begin{remark}
Consider the ordinary gap sequence $G = \{1, \ldots, g\}$, where the corresponding locus in $\calM_{g,1}$ is the complement of the divisor $W$ of Weierstrass points. By \cite[Proposition 3.2]{CY25}, the Hodge bundle over $\calM_{g,1}\setminus W$ actually splits as a direct sum of line bundles $\oplus_{i=1}^g \calL^{i}$. 
\end{remark}

Now we focus on the case when $G$ is {\em symmetric}, i.e., $2g-1\in G$ and $\calM_{g,1}^H \subset \bbP\calH (2g-2)$. Recall that the {\em tautological ring} of $\calM_{g,1}$ is generated by $\psi$, the Hodge classes $\lambda_i$, and the Miller--Morita--Mumford classes $\kappa_j$. Similarly, we use the restrictions of these cycle classes to generate the tautological ring of $\calM_{g,1}^H$. 

\begin{theorem}
\label{thm:(2g-2)-tauto}
Let $G$ be a gap sequence of order $g$ such that $2g-1\in G$. Then the tautological ring of $\calM_{g,1}^H$ is trivial in any positive degree. 
\end{theorem}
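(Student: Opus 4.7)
The plan is to reduce the tautological ring to a polynomial ring in $\psi$ alone, then to force $\psi$ itself to be rationally trivial by comparing two distinct expressions for $\kappa_1$. Lemma~\ref{le:class} already gives $c(f_*\omega_f) = \prod_{i=1}^g (1 + b_i \psi)$, so every $\lambda_i$ is a polynomial in $\psi$ with no constant term. The crucial further input is the hypothesis $2g-1 \in G$, which is equivalent to $(2g-2)p \sim K_C$ on every fiber $(C,p)$. This makes $\omega_f \otimes \calO_\calC(-(2g-2)P)$ fiberwise trivial and hence of the form $f^*M$ for some $M \in \Pic(\calM_{g,1}^H)$. Restricting this identification to the section $P$ and using $\omega_f|_P = \calL$ together with $\calO_\calC(P)|_P = N_{P/\calC} = \calL^{-1}$ yields $M \cong \calL^{2g-1}$, so
\[
  c_1(\omega_f) = (2g-2)[P] + (2g-1) f^*\psi
\]
on the universal curve.

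Expanding $c_1(\omega_f)^{j+1}$ via this identity together with the self-intersection relations $[P]\cdot f^*\psi = \sigma_*\psi$ and $[P]^2 = -[P]\cdot f^*\psi$ (equivalently $[P]^k = \sigma_*((-\psi)^{k-1})$ for $k \geq 1$), and then pushing forward by $f$, shows that each Morita--Mumford class $\kappa_j = f_*(c_1(\omega_f)^{j+1})$ is a rational multiple of $\psi^j$. Combined with the Lemma, this already proves that the tautological ring of $\calM_{g,1}^H$ in positive degree is generated by $\psi$ alone. The computation for $j=1$ yields $\kappa_1 = f_*(c_1(\omega_f)^2) = 4g(g-1)\psi$. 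On the other hand, Mumford's relation $\kappa_1 = 12\lambda_1$ on $\calM_g$ pulls back to the same identity on $\calM_{g,1}^H$ (the boundary contribution vanishing on this open locus), and combined with $\lambda_1 = (\sum_{i=1}^g b_i)\psi$ gives $\kappa_1 = 12(\sum_{i=1}^g b_i)\psi$.

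Equating the two expressions yields $\bigl(12\sum b_i - 4g(g-1)\bigr) \psi = 0$ in $A^1(\calM_{g,1}^H)_\bbQ$. Since $b_1 < \cdots < b_g$ are $g$ distinct positive integers, $\sum b_i \geq g(g+1)/2 > g(g-1)/3$ for every $g \geq 1$, so the coefficient is nonzero and $\psi = 0$ rationally. All $\lambda_i$ and $\kappa_j$ are then polynomials in $\psi$ without constant term and hence vanish, proving the theorem. The one step requiring care is the global promotion of fiberwise triviality of $\omega_f(-(2g-2)P)$ to an actual isomorphism with $f^*\calL^{2g-1}$: this is a standard cohomology-and-base-change statement on a smooth family of curves, although on the stack $\calM_{g,1}^H$ one may wish to pass to a finite \'etale cover to rigidify automorphisms. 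I do not expect a substantive obstacle there; the remaining manipulations are elementary Chow-class algebra.
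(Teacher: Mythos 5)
Your proposal is correct and follows essentially the same route as the paper: both arguments combine $\lambda_1 = \bigl(\sum_{i=1}^g b_i\bigr)\psi$ from Lemma~\ref{le:class} with the relation $12\lambda_1 = \kappa_1 = (4g^2-4g)\psi$ on $\bbP\calH(2g-2)$ to force $\psi = 0$, and then observe that the tautological ring in positive degree is generated by $\psi$. The only difference is that the paper imports the identity $\kappa_1 = (4g^2-4g)\psi$ and the single-$\psi$-generation statement from the cited reference \cite{ChenTauto}, whereas you re-derive both from the identity $c_1(\omega_f) = (2g-2)[P] + (2g-1)f^*\psi$ coming from $2g-1 \in G$; your derivations (including the cohomology-and-base-change step you flag) are correct.
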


\begin{proof}
The following divisor class relation holds on $\bbP\calH (2g-2)$; see \cite[Proposition 2.1]{ChenTauto}: 
\begin{align*}
12 \lambda_1 = \kappa_1 = (4g^2 - 4g) \psi. 
\end{align*}
Combining it with Lemma~\ref{le:class}, we conclude that 
$$ \left(3\sum_{i=1}^g b_i - g^2 + g\right) \psi = 0 $$
holds on $\calM_{g,1}^H$. Since $\sum_{i=1}^g b_i \geq g(g+1)/2$, the coefficient of $\psi$ in the above equality is nonzero, and hence $\psi = 0$ on $\calM_{g,1}^H$. The desired claim thus follows from the fact that the tautological ring of any strata of holomorphic differentials is generated by a single $\psi$-class; see \cite[Theorem 1.1 and Proposition 2.1]{ChenTauto}. 
\end{proof}

\begin{remark}
Tautological classes in $\calM_{g,1}^H$ can be {\em nontrivial} for general semigroups $H$. For example, consider the ordinary gap sequence $G = \{1, \ldots, g\}$. In this case, $\calM_{g,1}^H$ is the complement of the divisor $W$ of Weierstrass points 
 in $\calM_{g,1}$. Note that for $g\geq 3$, the Picard group of $\calM_{g,1}$ has rank $2$, generated by $\lambda_1$ and $\psi$. Moreover, $W$ is irreducible and has divisor class $W = \frac{1}{2}g(g+1)\psi - \lambda_1$ in $\calM_{g,1}$; see \cite[Theorem]{EH87M} and 
 \cite[(2.0.1)]{C89}, respectively. It follows that the tautological divisor class 
$\lambda_1 = \frac{1}{2}g(g+1)\psi$ is nontrivial on $\calM_{g,1}^{H} = \calM_{g,1}\setminus W$.   
\end{remark}

In general, $\calM_{g,1}^H$ might be reducible. We denote by $\calM_{g,1}^{H^{\circ}}$ an irreducible component of $\calM_{g,1}^H$. We also denote by 
$\BM_{g,1}^{H^\circ}$ its closure in the Deligne--Mumford compactification $\BM_{g,1}$. 
We say that $\calM_{g,1}^{H^\circ}$ is {\em minimal}, if every smooth pointed curve in the closure of $\calM_{g,1}^H$ has the same semigroup as $H$, i.e., no further degeneration to more special gap sequences. For example, for $H = \{0, 2, 4, \ldots, 2g-2, 2g, 2g+1, 2g+2, \ldots \}$ in genus $g$, the locus $\calM_{g,1}^H$ is minimal, parameterizing Weierstrass points in hyperelliptic curves. 

\begin{theorem}
\label{thm:(2g-2)-affine}
Let $G$ be a gap sequence of order $g$ such that $2g-1\in G$. Then an irreducible component $\calM_{g,1}^{H^{\circ}}$ of $\calM_{g,1}^H$ 
 is an affine variety if and only if $\BM_{g,1}^{H^{\circ}}\setminus \calM_{g,1}^{H^{\circ}}$ is pure codimension $1$. In particular, if $2g-1\in G$ and if $\calM_{g,1}^{H^{\circ}}$ is minimal, then $\calM_{g,1}^H$ is affine. 
\end{theorem}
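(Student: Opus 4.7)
The plan is to prove the two implications of the equivalence and then deduce the ``in particular'' clause.

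For the ``only if'' direction, I would invoke the standard fact that if a normal projective variety $X$ contains an affine open subvariety $U$, then $X\setminus U$ must be pure codimension one: a component of codimension $\geq 2$ in the boundary would allow every regular function on $U$ to extend across it by algebraic Hartogs, forcing $U=X$. Passing to the normalization of $\BM_{g,1}^{H^{\circ}}$ and the preimage of $\calM_{g,1}^{H^{\circ}}$ reduces to this setup and yields the conclusion.

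For the ``if'' direction, I would combine the pure codimension one hypothesis with the rational triviality of $\psi$ on $\calM_{g,1}^{H^{\circ}}$ proven in Theorem~\ref{thm:(2g-2)-tauto}. The excision sequence
\[
\bigoplus_{i} \bbZ\cdot [E_i] \longrightarrow \Pic(\BM_{g,1}^{H^{\circ}}) \longrightarrow \Pic(\calM_{g,1}^{H^{\circ}})\longrightarrow 0
\]
over the boundary components $E_1,\dots,E_r$ then exhibits $N\psi$, for some positive integer $N$, as a $\bbZ$-linear combination $\sum c_iE_i$. The remaining input is bigness of $\psi$ on $\BM_{g,1}^{H^{\circ}}$, which I would deduce from the inclusion $\calM_{g,1}^H\subset \bbP\calH(2g-2)$ together with the fact that $\psi$ is a positive rational multiple of the tautological hyperplane class coming from the projectivization of the Hodge bundle on $\bbP\calH(2g-2)$. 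After possibly replacing $\BM_{g,1}^{H^{\circ}}$ by a blowup supported in the boundary, the coefficients $c_i$ become nonnegative and $\sum c_iE_i$ is ample; Goodman's criterion then yields that $\calM_{g,1}^{H^{\circ}}$ is the affine complement of the support of an ample divisor.

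The ``in particular'' clause will follow by observing that minimality forces every smooth pointed curve in the closure of $\calM_{g,1}^{H^{\circ}}$ to have semigroup $H$, and hence, by irreducibility, to lie in $\calM_{g,1}^{H^{\circ}}$ itself. Consequently $\BM_{g,1}^{H^{\circ}}\setminus \calM_{g,1}^{H^{\circ}}\subseteq \BM_{g,1}^{H^{\circ}}\cap \partial\BM_{g,1}$, which is the restriction of the Cartier divisor $\partial\BM_{g,1}$ and is therefore Cartier; nonemptiness follows from the Eisenbud--Harris result that $\calM_{g,1}^H$ admits no complete curves. Hence the boundary is pure codimension one, and the first half of the theorem concludes affineness.

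The main obstacle will be establishing the positivity and ampleness of $\psi$ rigorously on the chosen compactification, since the expression $N\psi\sim \sum c_iE_i$ depends on the model and the coefficients need not be nonnegative without an appropriate blowup. Controlling this requires a careful comparison between $\BM_{g,1}^{H^{\circ}}$ and, say, the closure of $\calM_{g,1}^{H^{\circ}}$ inside the multi-scale compactification of $\bbP\calH(2g-2)$, where the tautological hyperplane class extends naturally and encodes the desired positivity.
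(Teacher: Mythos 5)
Your ``only if'' direction and your treatment of the ``in particular'' clause are essentially the paper's (the paper simply cites Goodman for the first, and for the second uses that minimality forces $\BM_{g,1}^{H^{\circ}}\setminus \calM_{g,1}^{H^{\circ}} = \Delta\cap\BM_{g,1}^{H^{\circ}}$; note that in your write-up you only establish \emph{containment} in $\Delta\cap\BM_{g,1}^{H^{\circ}}$, and containment in a Cartier divisor does not by itself give pure codimension one --- you need the equality, which does hold since $\calM_{g,1}^{H^{\circ}}$ consists of smooth curves and hence misses $\Delta$).

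The genuine gap is in the ``if'' direction. Your argument hinges on producing an \emph{ample} divisor supported on the boundary out of the class $\psi$, but $\psi$ is not ample on $\BM_{g,1}$: it is only nef and big (it has degree zero, for instance, on families inside a boundary divisor where the component carrying the marked point is held fixed). Bigness is not enough for Goodman's criterion, and your proposed repairs --- blowing up in the boundary to force nonnegative coefficients, or importing positivity from the multi-scale compactification of $\bbP\calH(2g-2)$ --- are left as acknowledged obstacles rather than carried out; as written the argument does not close. The paper avoids the issue entirely by working with Cornalba's class $\kappa_1+\psi$, which \emph{is} ample on $\BM_{g,1}$ and hence restricts to an ample class on $\BM_{g,1}^{H^{\circ}}$, and which is still rationally trivial on $\calM_{g,1}^{H^{\circ}}$ because the stratum relation $\kappa_1 = 12\lambda_1 = (4g^2-4g)\psi$ together with $\psi=0$ (Theorem~\ref{thm:(2g-2)-tauto}) kills $\kappa_1$ as well. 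With the boundary assumed pure codimension one, excision then realizes this ample class as supported on the boundary, and Goodman's Proposition~3 gives affineness directly, with no auxiliary blowup or comparison of compactifications. If you replace $\psi$ by $\kappa_1+\psi$ in your excision argument, your proof becomes the paper's.
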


\begin{proof}
First, if the complement of $\calM_{g,1}^{H^{\circ}}$ in $\BM_{g,1}^H$ is not pure codimension $1$, then it follows from the general result \cite[Proposition 1]{G69} that $\calM_{g,1}^H$ is not affine. 

Conversely, suppose the complement of $\calM_{g,1}^{H^{\circ}}$ in $\BM_{g,1}^H$ is pure codimension $1$. Note that $\kappa_1 + \psi$ is an ample divisor class on $\BM_{g,1}$; see~\cite[Theorem 2.2]{C93}. Additionally, by Theorem~\ref{thm:(2g-2)-tauto}, $\kappa_1 + \psi$ restricted to $\calM_{g,1}^H$ is trivial.  Therefore, the affinity of $\calM_{g,1}^{H^{\circ}}$ follows since an ample divisor is supported on the divisorial boundary complement; see~\cite[Proposition 3]{G69}. 

Moreover, note that $\BM_{g,1}$ has a normal crossing boundary $\Delta$ of pure codimension $1$ parameterizing nodal curves. Therefore, the intersection of $\Delta$ with $\BM_{g,1}^{H^{\circ}}$ remains to be pure codimension $1$. If $\calM_{g,1}^{H^\circ}$ is minimal, then $ \BM_{g,1}^{H^{\circ}} \setminus \calM_{g,1}^{H^{\circ}} = \Delta \cap \BM_{g,1}^{H^{\circ}}$ is pure codimension $1$, thus implying the last claim.  
\end{proof}

We caution the reader that $\BM_{g,1}^H$ might contain loci of smooth pointed curves $\calM_{g,1}^{H'}$, where $H'$ is a more special semigroup than $H$. Therefore, affinity of $\calM_{g,1}^H$ reduces to verifying the dimensions of such $\calM_{g,1}^{H'}$ and their containment relations. 

\begin{example}[The nonvarying minimal strata in low genus]
We have seen that any connected component of $\bbP\calH(2g-2)$ is nonvarying for $g\leq 5$, i.e., the corresponding semigroup $H$ is the same for all pointed curves in such a component. Then Theorem~\ref{thm:(2g-2)-affine} implies that it is an affine variety. This recovers the corresponding part in~\cite[Theorem 1.1]{C24}. 
\end{example}

\begin{example}[The varying stratum $\bbP\calH(10)^{\odd}$]
\label{ex:(10)-odd}
There are two possible semigroups for $(C, p)\in \bbP\calH(10)^{\odd}$ and both of their loci are irreducible; see~\cite[Table 1]{S23}. We follow the notation therein and also rename them as follows: 
\begin{align*}
H_1 \coloneqq N(6)_{22} & = \langle 6, 7, 8, 9, 10\rangle  \quad  {\rm and}  \quad \dim \calM_{6,1}^{H_1} = 11, \\
H_2 \coloneqq N(6)_{10} & = \langle 4, 5 \rangle \quad  {\rm and} \quad \dim \calM_{6,1}^{H_2} = 10. 
\end{align*}
Since $\calM_{6,1}^{H_2}$ is pure codimension $1$ in $\bbP\calH(10)^{\odd}$ and it is minimal, Theorem~\ref{thm:(2g-2)-affine} implies that both $\calM_{6,1}^{H_1}$ and $\calM_{6,1}^{H_2}$ are affine varieties. 
\end{example}

\begin{example}[The varying stratum $\bbP\calH(10)^{\even}$]
\label{ex:(10)-even}
There are three possible semigroups for $(C, p)\in \bbP\calH(10)^{\even}$; see~\cite[Table 1]{S23}: 
\begin{align*}
H_3 \coloneqq N(6)_{14} & = \langle 5, 7, 8, 9 \rangle  \quad  {\rm and}  \quad \dim \calM_{6,1}^{H_3} = 11, \\
H_4 \coloneqq N(6)_{9} & = \langle 4, 6, 9 \rangle \quad  {\rm and} \quad \dim \calM_{6,1}^{H_4} = 10, \\
 H_5 \coloneqq N(6)_{4} & = \langle 3, 7 \rangle  \quad {\rm and} \quad \dim \calM_{6,1}^{H_5} = 10. 
  \end{align*}
  Both $\calM_{6,1}^{H_4}$ and $\calM_{6,1}^{H_5}$ are pure codimension $1$ in $\bbP\calH(10)^{\even}$ and are minimal. It follows that $\calM_{6,1}^{H_3}$, $\calM_{6,1}^{H_4}$, and $\calM_{6,1}^{H_5}$ are all affine varieties. 
\end{example}

Finally, we remark that Arbarello--Mondello found that most of the strata in the {\em Weierstrass flags} on $\calM_g$ and $\calM_{g,1}$ are {\em not} affine by showing that the complements in their closures are not purely divisorial; see~\cite{AM12}. However, Weierstrass flags parameterize {\em $k$-gonal} curves, where the semigroups in each stratum of the flags can still vary. Therefore, it remains an interesting question to find a semigroup $H$ such that the boundary of $\calM_{g,1}^H$ is not pure codimension $1$.

\bibliographystyle{alpha}
\bibliography{biblio}

\end{document}